\documentclass{amsart}
\usepackage{graphicx}
\usepackage[all]{xy}
\usepackage{amscd}
\usepackage{amssymb}
\usepackage{amsmath}
\usepackage{amsthm}
\usepackage{amsfonts}
\usepackage{graphics}
\usepackage{epsfig,psfrag}
\usepackage[english]{babel}
\usepackage{latexsym}
\usepackage{mathrsfs}

\newtheorem{theorem}{Theorem}[section]

\newtheorem{lemma}[theorem]{Lemma}

\newtheorem{proposition}[theorem]{Proposition}

\theoremstyle{definition}

\theoremstyle{remark}
\newtheorem{remark}[theorem]{Remark}



\begin{document}

\title[Minimal surfaces in $S^3$ with high symmetry]
{Minimal surfaces in the three dimensional sphere with high symmetry}

\author{Sheng Bai}
\address{School of Mathematical Sciences, Peking University, Beijing 100871, CHINA}
\email{barries@163.com}

\author{Chao Wang}
\address{Jonsvannsveien 87B, H0201, Trondheim 7050, NORWAY}
\email{chao\_{}wang\_{}1987@126.com}

\author{Shicheng Wang}
\address{School of Mathematical Sciences, Peking University, Beijing 100871, CHINA}
\email{wangsc@math.pku.edu.cn}

\subjclass[2010]{Primary 53A10,53C42,57M60; Secondary 57R18}

\keywords{minimal surface, Hopf fibration, spherical geometry, spherical orbifold, finite group action}

\thanks{This work was supported by National Natural Science Foundation of China (Grant Nos. 11371034 and 11501534).}

\begin{abstract}
Using the Lawson's existence theorem of minimal surfaces and the symmetries of the Hopf fibration, we will construct symmetric embedded closed minimal surfaces in the three dimensional sphere. These surfaces contain the Clifford torus, the Lawson's minimal surfaces, and seven new minimal surfaces with genera $9$, $25$, $49$, $121$, $121$, $361$ and $841$. We will also discuss the relation between such surfaces and the maximal extendable group actions on subsurfaces of the three dimensional sphere.
\end{abstract}

\maketitle

\section{Introduction}\label{Sec:intro}

In \cite{L1} Lawson established two kinds of reflection principles for minimal surfaces in the three dimensional sphere $S^3$. Using one of them he constructed a family of embedded closed minimal surfaces in $S^3$. Moreover, the genus $g$ of such a surface can be any positive integer, and the minimal surface is not unique if $g$ is not a prime number. Then based on Lawson's work, in \cite{KPS} Karcher, Pinkall and Sterling constructed nine new embedded closed minimal surfaces in $S^3$. Their construction relies on the symmetric tessellations of $S^3$, and both two reflection principles were used. Moreover, their examples disproved Lawson's equal volume conjecture, which says that any embedded closed minimal surface in $S^3$ separates $S^3$ into two components of equal volume. Note that in \cite{L3} Lawson proved that the two components must be diffeomorphic.

The main purpose of this paper is to give more minimal surfaces in $S^3$. We will construct seven new embedded closed minimal surfaces in $S^3$ with genera $9$, $25$, $49$, $121$, $121$, $361$ and $841$. Since our surfaces admit high symmetry, they also give new embedded closed minimal surfaces in classical spherical manifolds. For example, there exist three embedded closed minimal surfaces with genera $2$, $4$ and $8$ in the Poincar\'e's homology three sphere.

The construction of our surfaces will be similar to the construction in \cite{L1}. Namely we will find certain quadrilaterals in $S^3$ such that each edge of them is a geodesic and each angle of them has the form $\pi/(l+1)$ with $l$ a positive integer. Then by Lawson's existence theorem the quadrilaterals will bound minimal disks such that via successive $\pi$-rotations around the edges one can get closed minimal surfaces. However, to find the quadrilaterals we need well understanding of the Hopf fibration and the isometries of $S^3$ that preserve the fibration. The geodesic quadrilaterals will be the lifts of certain piecewise geodesics in $S^2$.

For the convenience of computation, in the paper we will consider the ``enlarged'' Hopf fibration given by the map $\mathcal{P} : S^3_2 \rightarrow S^2$, $r \mapsto r^{-1}ir$ as in Lemma \ref{Lem:lift}, which is from the three dimensional sphere with radius $2$ in the quaternion space $\mathbb{H}$ to the unit sphere in the $ijk$-space. The piecewise geodesics will be like the figure ``8''. Edge lengths and angles of their lifts are listed in Section \ref{ssec:fund}. Figure \ref{fig:HopfGQ} is a sketch map of this construction. We have mapped $S^3_2\setminus\{-2\}$ to the $ijk$-space by the map $t+xi+yj+zk \mapsto (xi+yj+zk)/(t+2)$, and red lines denote the fibres.

\begin{figure}[h]
\centerline{\includegraphics{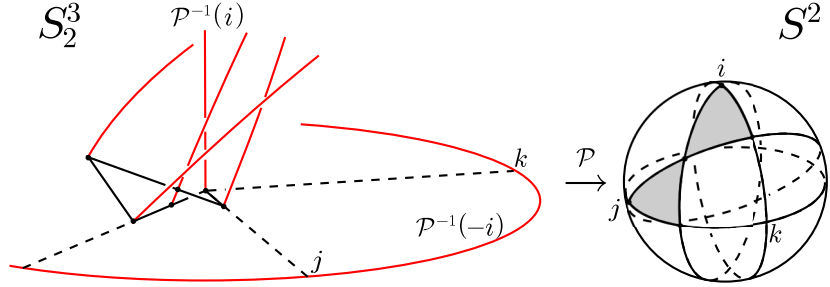}}
\caption{Hopf fibration and geodesic quadrilateral in $S^3_2$}\label{fig:HopfGQ}
\end{figure}

By this way we can get three families of quadrilaterals which correspond to the Clifford torus and Lawson's minimal surfaces, and eight special quadrilaterals which can give seven different new minimal surfaces. The isometric groups of the minimal surfaces can also be obtained (Proposition \ref{Pro:iso}), and we will see that the minimal surfaces admit high symmetries. Actually the motivation of this paper was inspired by the results about maximal extendable group actions on subsurfaces of $S^3$.

Let $\Sigma_g$ be a genus $g>1$ embedded smooth closed surface in $S^3$, and $G$ is a finite group acting smoothly and faithfully on the pair $(\Sigma_g, S^3)$, then the $G$-action on $\Sigma_g$ is called extendable over $S^3$ with respect to the embedding. In \cite{WWZZ} the authors showed that if elements in $G$ preserve both orientations of $\Sigma_g$ and $S^3$ and the order of $G$ is bigger than $4(g-1)$, then the orbifold pair $(\Sigma_g/G, S^3/G)$ can be classified. As a corollary, for a given $g$ the maximum of the order of $G$ was determined. We call the extendable $G$-action maximal if the order of $G$ reaches the maximum. By the results in \cite{WWZZ}, all the maximal extendable $G$-actions consist of two infinite sequences and some special cases as in Table \ref{tab:maximal action}.

\begin{table}[h]
\caption{Maximal extendable actions}\label{tab:maximal action}\centerline{
\begin{tabular}{|c|c|}
\hline order of $G$ & genus $g>1$ (count by multiplicity)\\\hline
 $12(g-1)$ & $\{3, 5, 6, 11, 17, 601\}_{KPS}, \{9, 11, 121, 241, 241\}_K,$\\
 & $9, 11, 25, 97, 121, 241$\\
 $8(g-1)$ & $\{7, 73\}_{KPS}, 49$\\
 $20(g-1)/3$ & $\{19\}_{KPS}, \{361\}_K, 361$\\
 $6(g-1)$ & $\{21, 481\}_K^{\infty}$\\
 $24(g-1)/5$ & $41$\\
 $30(g-1)/7$ & $\{29, 841\}_L, 29, 841, 1681$\\
 $4(l+1)^2$ & $\{l^2, l\neq3, 5, 7, 11, 19, 29, 41\}_L$\\
 $4(g+1)$ & $\{\text{remaining numbers}\}_L$\\
\hline
\end{tabular}}
\end{table}

In Table \ref{tab:maximal action}, except $21$ and $481$ each number in the right part represents a maximal extendable action. For example, in the $12(g-1)$ case the three $241$'s represent three non-conjugate actions. For $21$ and $481$ there are infinitely many maximal extendable actions. The subscript $K$ means that at least one side of the embedded surface is not a handlebody. Such a surface (or the embedding) is usually called knotted.


For a minimal surface $M$ in $S^3$, let $Isom(M)$ be the group generated by the isometries of $S^3$ which keep $M$ invariant, and let $Isom^+(M)$ be the group generated by the elements of $Isom(M)$ which preserve both orientations of $M$ and $S^3$. Then it happens that most maximal extendable actions are conjugate to some $Isom^+(M)$-action on $(M, S^3)$ where $M$ is some embedded closed minimal surface constructed in \cite{L1} or \cite{KPS}. In Table \ref{tab:maximal action}, the subscripts $L$ and $KPS$ indicate this correspondence. Note that in \cite{L3} Lawson proved that each side of an embedded closed minimal surface must be a handlebody. Hence for actions with subscript $K$ the embedded surfaces can not be minimal.

In \cite{WWZ} the authors determined the maximum order of general extendable $G$-actions, namely elements in $G$ may reverse the orientation of $M$ or $S^3$. For the general maximal extendable actions we can have Table \ref{tab:gen max act}, and in this case it can be shown that the embedded surface can not be knotted.

\begin{table}[h]
\caption{General maximal extendable actions}\label{tab:gen max act}
\centerline{
\begin{tabular}{|c|c|}
\hline order of $G$ & genus $g>1$ (count by multiplicity)\\\hline
 $48(g-1)$ & $\{6\}_{KPS}$\\
 $32(g-1)$ & $\{73\}_{KPS}$\\
 $24(g-1)$ & $\{5\}_L, \{5, 11, 17, 601\}_{KPS}, 11 ,25, 97, 121$\\
 $16(l+1)^2$ & $\{l^2, l\neq 11\}_L$\\
 $16(g+1)$ & $\{\text{remaining numbers}\}_L$\\
\hline
\end{tabular}}
\end{table}

Our results actually partly answers the following question (Proposition \ref{Pro:max}).

{\bf Question :} Is every general maximal extendable action (maximal extendable action on unknotted $\Sigma_g$) conjugate to some $Isom(M)$-action ($Isom^+(M)$-action) on $(M, S^3)$ such that $M$ is minimal?

In section \ref{Sec:basic}, we will give some preliminary lemmas about spherical geometry and Hopf fibration, which are basic, see \cite{CS}. We will mainly use Lemma \ref{Lem:lift} and Lemma \ref{Lem:trans} in the constructions of minimal surfaces, and readers who are familiar with these contents can skip this section.

In section \ref{Sec:construction}, we will give the constructions of minimal surfaces. We will lift the skeletons of classical tessellations of $S^2$ to get skeletons of the minimal surfaces. Then we will show that the skeletons in $S^3_2$ are unions of geodesic quadrilaterals. Explicit information about the quadrilaterals and the corresponding groups of the skeletons will be given.

In section \ref{Sec:existence}, we will introduce the Lawson's existence theorem. Then we will give a version of the maximal principle and a uniqueness lemma about minimal surfaces. Finally, we will use the lemmas to verify the conditions in the Lawson's existence theorem and show the existence of the minimal surfaces.

In section \ref{Sec:symmetry}, we will determine the isometric groups of the minimal surfaces. Then we will consider some orbifold pairs related to the maximal extendable actions and give some further discussions about the results in \cite{WWZZ} and \cite{WWZ}.

\section{On spherical geometry and Hopf fibration}\label{Sec:basic}

\subsection{Quaternion and orthogonal actions}

Let $\mathbb{R}$, $\mathbb{C}$ and $\mathbb{H}$ denote the real number, the complex number and the quaternion number respectively. Then
\begin{align*}
\mathbb{C}&=\{t+xi \mid t, x \in \mathbb{R}\},\\
\mathbb{H}&=\{t+xi+yj+zk \mid t, x, y, z \in \mathbb{R}\}\\
          &=\{z_1+z_2j \mid z_1, z_2 \in \mathbb{C}\},
\end{align*}
where $i^2=j^2=k^2=-1$, $ij=-ji=k$, $jk=-kj=i$, $ki=-ik=j$.

For $z=t+xi \in \mathbb{C}$, let $\bar{z}=t-xi$, $|z|=\sqrt{t^2+x^2}$, then $\bar{\bar{z}}=z$, $z\bar{z}=|z|^2$ and $jz=\bar{z}j$. For $z_1+z_2j, w_1+w_2j \in \mathbb{H}$, we have
$$(z_1+z_2j)(w_1+w_2j)=(z_1w_1-z_2\bar{w}_2)+(z_1w_2+z_2\bar{w}_1)j.$$
For $h=z_1+z_2j \in \mathbb{H}$, let $\bar{h}=\bar{z}_1-z_2j$, $|h|=\sqrt{|z_1|^2+|z_2|^2}$, then $\bar{\bar{h}}=h$, $h\bar{h}=|h|^2$. Moreover, for $z_1, z_2, w_1, w_2 \in \mathbb{C}$, we have $\overline{z_1z_2}=\bar{z}_1\bar{z}_2$, $\overline{z_1\pm z_2}=\bar{z}_1\pm\bar{z}_2$. Hence
\begin{align*}
\overline{(z_1+z_2j)(w_1+w_2j)}&= (\bar{z}_1\bar{w}_1-\bar{z}_2w_2)-(z_1w_2+z_2\bar{w}_1)j\\
&=(\bar{w}_1-w_2j)(\bar{z}_1-z_2j)\\
&=(\overline{w_1+w_2j})(\overline{z_1+z_2j}).
\end{align*}
Hence for $p, q \in \mathbb{H}$ we have
$$|pq|^2=pq\overline{pq}=pq\bar{q}\bar{p}=|p|^2|q|^2.$$
Namely $|pq|=|p||q|$.

Identify $\mathbb{H}$ with the four dimensional Euclidean space. Then every element $h \in \mathbb{H}$ can be thought as a vector, and $|h|$ is the norm of $h$. Let
\begin{align*}
E^3&=\{t+xi+yj+zk \in \mathbb{H} \mid t=0\},\\
S^3&=\{t+xi+yj+zk \in \mathbb{H} \mid t^2+x^2+y^2+z^2=1\},\\
S^2&=\{xi+yj+zk \in E^3 \mid x^2+y^2+z^2=1\}=S^3\cap E^3.
\end{align*}
Then $E^3$ is a hyperplane which is orthogonal to $1$. We assume that $i$, $j$, $k$ form a right hand orthogonal system of $E^3$. The two sets $S^3$ and $S^2$ are unit spheres in $\mathbb{H}$ and $E^3$ respectively. Clearly $E^3$ has standard Euclidean geometry, $S^3$ and $S^2$ have standard spherical geometry.

Let $SO(4)$ be the orientation preserving isometric group of $\mathbb{H}$ which preserves $S^3$, and let $SO(3)$ be the orientation preserving isometric group of $E^3$ which preserves $S^2$. For $p, q \in S^3$, $h \in \mathbb{H}$, we have $p^{-1}=\bar{p}$ and
$$|p^{-1}hq|=|p^{-1}||h||q|=|h|.$$
Hence we get an element of $SO(4)$.
\begin{align*}
\mathbb{H} & \rightarrow \mathbb{H}\\
h & \mapsto p^{-1}hq
\end{align*}
Denote it by $[p, q]$. Then $[p, p]$ preserves $1 \in \mathbb{H}$. Hence it also preserves $E^3$, and it gives an element of $SO(3)$. Denote it by $[p]$. The map
\begin{align*}
\mathbb{H} & \rightarrow \mathbb{H}\\
h & \mapsto -\bar{h}
\end{align*}
is the reflection about $E^3$. Hence generally the reflection about the hyperplane, which passes $0 \in \mathbb{H}$ and is orthogonal to $p$, is given by the following map.
\begin{align*}
\mathbb{H} & \rightarrow \mathbb{H}\\
h & \mapsto -\overline{hp^{-1}}p
\end{align*}
Notice that every element of $SO(4)$ can be presented as a composition of even reflections about hyperplanes passing $0 \in \mathbb{H}$. We have the following lemma.

\begin{lemma}\label{Lem:2to1}
The following two maps are surjective homomorphisms, with kernels generated by $(-1, -1)$ and $-1$ respectively.
\begin{align*}
S^3\times S^3 & \rightarrow SO(4) & S^3 & \rightarrow SO(3)\\
(p, q) & \mapsto [p,q] & p & \mapsto [p]
\end{align*}
\end{lemma}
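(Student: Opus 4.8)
The plan is to verify, in order, four things: that each map takes values in the special orthogonal group, that it is a homomorphism, that it is surjective, and finally to compute the kernel.

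First, the target. The excerpt already records $|p^{-1}hq| = |p|\,|h|\,|q| = |h|$ for $p,q \in S^3$, so $[p,q]$ is an $\mathbb{R}$-linear isometry of $\mathbb{H}\cong\mathbb{R}^4$, i.e.\ $[p,q]\in O(4)$. Since $S^3\times S^3$ is connected, the continuous map $(p,q)\mapsto[p,q]$ sends it into the component of $[1,1]=\mathrm{id}$, namely $SO(4)$. For the second map, $[p,p]$ fixes $1\in\mathbb{H}$, hence preserves the orthogonal complement $E^3 = 1^{\perp}$, so the restriction $[p]$ is a well-defined element of $O(3)$; connectedness of $S^3$ and $[1]=\mathrm{id}$ again put it in $SO(3)$. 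That each map is a homomorphism is immediate from the defining formula $h\mapsto p^{-1}hq$ and associativity of quaternion multiplication, and the same computation handles $[p]$ since it is merely the restriction of $[p,p]$.

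Surjectivity onto $SO(4)$ is the \emph{main point}, and here I would use the two facts supplied just before the lemma: every element of $SO(4)$ is a composition of an \emph{even} number of reflections about hyperplanes through $0$, and the reflection about the hyperplane orthogonal to $p\in S^3$ is $\sigma_p\co h\mapsto -\overline{hp^{-1}}p = -p\bar h p$. A short quaternionic computation then gives
\[
(\sigma_{p_1}\circ\sigma_{p_2})(h) \;=\; (p_1\bar p_2)\,h\,(\bar p_2 p_1) \;=\; [\,p_2\bar p_1,\ \bar p_2 p_1\,](h),
\]
so every product of two hyperplane reflections lies in the image. Since the image of a homomorphism is a subgroup, it contains every even-length product of such reflections, i.e.\ all of $SO(4)$. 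Surjectivity onto $SO(3)$ then follows for free: given $\tau\in SO(3)$, extend it to $\tilde\tau\in SO(4)$ with $\tilde\tau|_{E^3}=\tau$ and $\tilde\tau(1)=1$; write $\tilde\tau=[p,q]$; evaluating at $1$ forces $p^{-1}q=1$, so $q=p$ and $\tau=\tilde\tau|_{E^3}=[p]$. (Alternatively one can invoke the axis--angle description $\cos(\theta/2)+\sin(\theta/2)\,u\mapsto$ rotation by $\theta$ about $u$, together with Euler's theorem.)

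Finally the kernels. If $[p,q]=\mathrm{id}$, evaluating at $h=1$ gives $p^{-1}q=1$, so $q=p$, and then $p^{-1}hp=h$ for all $h\in\mathbb{H}$; as the center of $\mathbb{H}$ is $\mathbb{R}$ and $p\in S^3$, we get $p=\pm1$. Hence the kernel of the first map is $\{(1,1),(-1,-1)\}=\langle(-1,-1)\rangle$, of order $2$. Likewise $[p]=\mathrm{id}$ means $p^{-1}hp=h$ for all $h\in E^3$, and together with the automatic identity $p^{-1}\cdot1\cdot p=1$ this says $p$ commutes with a spanning set of $\mathbb{H}$, so again $p=\pm1$ and the kernel is $\{1,-1\}=\langle-1\rangle$. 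The only step needing genuine care is the identification of products of two hyperplane reflections with maps of the form $[p,q]$; everything else is formal, so that quaternion manipulation is the one place I would slow down.
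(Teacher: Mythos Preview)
Your proof is correct and follows exactly the route the paper sets up: the paper does not spell out a proof of this lemma, but the paragraph immediately preceding it records the reflection formula $h\mapsto -\overline{hp^{-1}}p$ and the fact that every element of $SO(4)$ is an even product of such reflections, clearly intending the reader to compose two reflections and recognize the result as some $[p,q]$. Your computation $(\sigma_{p_1}\circ\sigma_{p_2})(h)=p_1\bar p_2\,h\,\bar p_2 p_1=[p_2\bar p_1,\bar p_2 p_1](h)$ is precisely that step, and your treatment of the targets, the homomorphism property, surjectivity onto $SO(3)$, and the kernels is routine and accurate.
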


The action of $SO(4)$ on $\mathbb{H}$ is a right action. Namely, for $[p_1, q_1], [p_2, q_2] \in SO(4)$, $h \in \mathbb{H}$, we have
$$(h^{[p_1, q_1]})^{[p_2, q_2]} =h^{[p_1, q_1][p_2, q_2]}
=h^{[p_1p_2, q_1q_2]},$$
or equivalently
$$p_2^{-1}(p_1^{-1}hq_1)q_2 =(p_2^{-1}p_1^{-1})h(q_1q_2)
=(p_1p_2)^{-1}h(q_1q_2).$$
Hence the action of $SO(3)$ on $E^3$ is also a right action.

\subsection{The identification of $SO(3)$ and $UTS^2$}

Let $UTS^2$ be the unit tangent bundle of $S^2$, which consists of all unit tangent vectors of $S^2$. An element in $UTS^2$ can be presented as $(P, v) \in E^3\times E^3$, where $P$ is a point in $S^2$, and $v$ is a unit tangent vector at $P$. The projection map
\begin{align*}
UTS^2 & \rightarrow S^2\\
(P,v) & \mapsto P
\end{align*}
gives a fibration. The fibre at $P$ is the unit circle in the tangent space at $P$.

Let $\mathbb{C}P^1$ be the one dimensional complex projective space. For $z_1+z_2j \in S^3$, let $[z_1:z_2]$ denote the ratio of $z_1$ and $z_2$. Then the map
\begin{align*}
S^3 & \rightarrow \mathbb{C}P^1\\
z_1+z_2j & \mapsto [z_1:z_2]
\end{align*}
gives the Hopf fibration. Each fibre has the form $\{e^{i\theta}p \mid \theta \in \mathbb{R}\}$, where $p \in S^3$. The antipodal map of $S^3$ preserves the Hopf fibration, and on each fibre it is also an antipodal map. By Lemma \ref{Lem:2to1}, it induces a fibration on $SO(3)$. Each fibre of the induced fibration has the form $\{[e^{i\theta}p] \mid \theta \in \mathbb{R}\}$, where $p \in S^3$.

We choose the unit tangent vector $j$ at the point $i \in S^2$. Then $(i,j) \in UTS^2$.

\begin{lemma}\label{Lem:identify}
The following map $\mathcal{I}$ gives an identification of $SO(3)$ and $UTS^2$. It also gives an identification of the fibration of $SO(3)$, which is induced from the Hopf fibration, and the fibration of $UTS^2$.
\begin{align*}
\mathcal{I} : SO(3) & \rightarrow UTS^2\\
[p] & \mapsto (p^{-1}ip, p^{-1}jp)
\end{align*}
Moreover, the $\theta$-increasing direction of the fibre $\{[e^{i\theta}p] \mid \theta\in \mathbb{R}\}$ corresponds to the direction of left hand rotations around $p^{-1}ip$.
\end{lemma}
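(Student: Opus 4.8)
The statement to prove is Lemma \ref{Lem:identify}: the map $\mathcal{I}\colon [p]\mapsto (p^{-1}ip, p^{-1}jp)$ identifies $SO(3)$ with $UTS^2$, carries the Hopf-induced fibration to the projection fibration of $UTS^2$, and matches the $\theta$-increasing direction of a Hopf fibre to the left-hand rotation around $p^{-1}ip$. My plan is to split the proof into four checks: (1) $\mathcal{I}$ is well-defined and lands in $UTS^2$; (2) $\mathcal{I}$ is a bijection; (3) $\mathcal{I}$ sends fibres to fibres; (4) the direction claim.

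\medskip\noindent\textbf{Step 1 (well-definedness and target).}
I would first observe that $[p,p]$ fixes $1\in\mathbb{H}$, hence preserves $E^3$, so $p^{-1}ip\in S^2$ since $|p^{-1}ip|=|i|=1$ and $p^{-1}ip\in E^3$; likewise $p^{-1}jp\in S^2$. To see that $p^{-1}jp$ is tangent to $S^2$ at $P:=p^{-1}ip$, note that $[p]$ is an orthogonal (in fact rotation) map of $E^3$ sending the orthonormal frame $(i,j,k)$ to the orthonormal frame $(p^{-1}ip, p^{-1}jp, p^{-1}kp)$; since $j\perp i$, applying the isometry gives $p^{-1}jp\perp p^{-1}ip=P$, so $p^{-1}jp\in T_PS^2$ and has unit length. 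Thus $(p^{-1}ip, p^{-1}jp)\in UTS^2$. Well-definedness on $SO(3)$: by Lemma \ref{Lem:2to1} the fibre of $S^3\to SO(3)$ over $[p]$ is $\{p,-p\}$, and $(-p)^{-1}i(-p)=p^{-1}ip$, $(-p)^{-1}j(-p)=p^{-1}jp$, so $\mathcal{I}$ depends only on $[p]$.

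\medskip\noindent\textbf{Step 2 (bijectivity).}
For surjectivity, given $(P,v)\in UTS^2$ I want some $p$ with $p^{-1}ip=P$ and $p^{-1}jp=v$. Because $(P, v, P\times v)$ is a positively oriented orthonormal frame of $E^3$ (using the right-hand orientation convention for $i,j,k$), there is a unique $A\in SO(3)$ with $A(i)=P$, $A(j)=v$, $A(k)=P\times v$; by Lemma \ref{Lem:2to1}, $A=[p]$ for some $p\in S^3$, which gives the required preimage. Actually one can exhibit $p$ directly as $p=\cos(\alpha/2)+\sin(\alpha/2)\,u$ where $u$ is the unit axis and $\alpha$ the angle of the rotation $A$, but the abstract argument via Lemma \ref{Lem:2to1} is cleaner. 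For injectivity, suppose $\mathcal{I}([p])=\mathcal{I}([q])$, i.e. the rotations $[p]$ and $[q]$ agree on $i$ and on $j$; since they also agree on $k=ij$ (orthogonal maps preserve cross products up to the orientation, which is preserved as both are in $SO(3)$), they agree on the whole frame, hence $[p]=[q]$. So $\mathcal{I}$ is a bijection. (Comparing dimensions, both spaces are closed $3$-manifolds and $\mathcal{I}$ is clearly smooth, so it is in fact a diffeomorphism, though for the lemma a bijection of the relevant structures suffices.)

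\medskip\noindent\textbf{Step 3 (fibres) and Step 4 (direction).}
For Step 3: the Hopf-induced fibre through $[p]$ is $\{[e^{i\theta}p]\mid \theta\in\mathbb{R}\}$. Apply $\mathcal{I}$: $(e^{i\theta}p)^{-1}i(e^{i\theta}p)=p^{-1}e^{-i\theta}\,i\,e^{i\theta}p=p^{-1}ip=P$, since $i$ commutes with $e^{i\theta}\in\mathbb{C}$. Hence every point of the Hopf fibre maps into the $UTS^2$-fibre over $P$. Conversely $(e^{i\theta}p)^{-1}j(e^{i\theta}p)=p^{-1}e^{-i\theta}j\,e^{i\theta}p = p^{-1}\,e^{-i\theta}\,\overline{e^{i\theta}}\,j\,p = p^{-1}e^{-2i\theta}jp$ using $jz=\bar z j$; as $\theta$ ranges over $\mathbb{R}$ the vector $p^{-1}e^{-2i\theta}jp$ sweeps out the whole unit circle in $T_PS^2$, so the image is exactly the $UTS^2$-fibre over $P$, and $\mathcal{I}$ maps fibre bijectively onto fibre. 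For Step 4: the computation just made shows that as $\theta$ increases the tangent vector rotates as $e^{-2i\theta}$ in the plane spanned (via $[p]$) by $j,k$; one must then translate ``multiplication by $e^{-i\phi}$ in the $jk$-plane seen from $i$'' into the geometric statement ``left-hand rotation around $i$'', and transport this by the isometry $[p]$ to get ``left-hand rotation around $P=p^{-1}ip$''. Concretely, $e^{-i\phi}j\,e^{i\phi}=e^{-2i\phi}j$ rotates $j$ toward $-k$ (for small $\phi>0$), which with the fixed right-hand frame $(i,j,k)$ is the left-hand (clockwise, viewed from the tip of $i$) rotation about $i$; applying the rotation $[p]$ sends this to the left-hand rotation about $p^{-1}ip$, as claimed.

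\medskip\noindent\textbf{Main obstacle.}
The routine parts (Steps 1–3) are just bookkeeping with the quaternion identities already recorded in the excerpt ($jz=\bar z j$, $|pq|=|p||q|$, the action formulas). The genuinely delicate point is Step 4: pinning down the sign/orientation so that the $\theta$-increasing direction corresponds to \emph{left}-hand rather than right-hand rotation. This hinges on three sign conventions interacting — the chosen orientation $(i,j,k)$ of $E^3$, the direction of the parametrization $\theta\mapsto e^{i\theta}p$ of the Hopf fibre, and the factor $e^{-2i\theta}$ (note the $-2$, not $+1$) appearing when one conjugates $j$. I would handle it by fixing the base frame $(i,j)$, computing $\tfrac{d}{d\theta}\big|_{0}\big((e^{i\theta})^{-1}j e^{i\theta}\big) = -2k$, observing this points in the $-k$ direction at the base point $(i,j)$, and checking that $(i, j, -k)$ is the infinitesimal data of the left-hand rotation about $i$; everything else then follows by naturality of $\mathcal{I}$ under the $SO(3)$-action.
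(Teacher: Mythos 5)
Your proposal is correct and follows essentially the same route as the paper: bijectivity via the faithful, transitive $SO(3)$-action on frames (which you simply spell out via the orthonormal frame $(P,v,P\times v)$ and Lemma \ref{Lem:2to1}), followed by the conjugation computations $(e^{i\theta}p)^{-1}i(e^{i\theta}p)=p^{-1}ip$ and $(e^{i\theta}p)^{-1}j(e^{i\theta}p)=p^{-1}e^{-2i\theta}jp$ to identify the fibres and read off the left-hand rotation direction. Your extra care with the well-definedness under $p\mapsto -p$ and the sign analysis at the end is consistent with, and slightly more explicit than, the paper's argument.
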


\begin{proof}
Since the action of $SO(3)$ on $UTS^2$ is faithful and transitive, the map $\mathcal{I}$ is a bijection. Consider the image of a fibre $\{[e^{i\theta}p] \mid \theta\in \mathbb{R}\}$ in $SO(3)$. We have
\begin{align*}
(e^{i\theta}p)^{-1}i(e^{i\theta}p)&=p^{-1}ip,\\
(e^{i\theta}p)^{-1}j(e^{i\theta}p)&=p^{-1}(e^{-2i\theta}j)p\\
&=p^{-1}(\cos(-2\theta)j+\sin(-2\theta)k)p.
\end{align*}
Hence $\mathcal{I}$ maps the fibre $\{[e^{i\theta}p] \mid \theta\in \mathbb{R}\}$ in $SO(3)$ to the fibre at $p^{-1}ip$ in $UTS^2$, and the increasing of $\theta$ corresponds to left hand rotations.
\end{proof}

As a byproduct, for $z_1+z_2j \in S^3$, the following map induced by $\mathcal{I}$ gives an identification of $\mathbb{C}P^1$ and $S^2$.
\begin{align*}
\mathbb{C}P^1 & \rightarrow S^2\\
[z_1:z_2] & \mapsto (z_1+z_2j)^{-1}i(z_1+z_2j)
\end{align*}

On $UTS^2$ we use the sub-topology from $E^3\times E^3$, and on $SO(3)$ we use the quotient-topology from $S^3$. Then $\mathcal{I}$ is a homeomorphism.

\subsection{The metrics on $SO(3)$ and $UTS^2$}

Let ``$\langle\cdot ,\cdot\rangle$'' denote the inner product in $E^3$. Given $\epsilon>0$, let $(P(s),v(s))$, $s \in (-\epsilon,\epsilon)$, be a smooth curve in $UTS^2$. Then we have the tangent vector $(dP(s)/ds,dv(s)/ds) \in E^3\times E^3$. Notice that
$$\langle P(s), P(s)\rangle=1, \langle v(s), v(s)\rangle=1, \langle P(s), v(s)\rangle=0.$$
We have
\begin{align*}
&\langle\frac{dP(s)}{ds}, P(s)\rangle=0, \langle\frac{dv(s)}{ds}, v(s)\rangle=0,\\
&\langle\frac{dP(s)}{ds}, v(s)\rangle+\langle P(s), \frac{dv(s)}{ds}\rangle=0.
\end{align*}
Let $f(s)$ be the cross product $P(s)\times v(s)$, then
\begin{align*}
\frac{dP(s)}{ds}&= \langle\frac{dP(s)}{ds}, v(s)\rangle v(s) + \langle\frac{dP(s)}{ds}, f(s)\rangle f(s),\\
\frac{dv(s)}{ds}&=-\langle\frac{dP(s)}{ds}, v(s)\rangle P(s) + \langle\frac{dv(s)}{ds}, f(s)\rangle f(s).
\end{align*}
Hence the tangent space of $UTS^2$ at the point $(P,v)$ is generated by
$$(v,-P), (P\times v,0), (0,P\times v).$$
As $(P,v)$ varies in $UTS^2$, we get three tangent vector fields. Under the standard Euclidean metric of $E^3\times E^3$ the three vectors at $(P,v)$ are orthogonal to each other, and their norms are $\sqrt{2}$, $1$ and $1$ respectively. We define a new metric on the tangent space such that the three vectors form a standard orthogonal system, namely the norm of each vector is $1$. Then we get a metric on $UTS^2$.

Let $S^3_2$ be the three dimensional sphere with radius $2$ in $\mathbb{H}$. For $r \in S^3_2$, we have $r/2 \in S^3$. Hence we have the following two to one covering map.
\begin{align*}
S^3_2 & \rightarrow SO(3)\\
r & \mapsto [r/2]
\end{align*}
Then the spherical metric on $S^3_2$ induces a metric on $SO(3)$ such that locally the covering map is an isometry.

\begin{lemma}\label{Lem:metric}
With above metrics on $SO(3)$ and $UTS^2$, $\mathcal{I}$ is an isometry.
\end{lemma}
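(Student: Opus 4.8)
The plan is to exploit the homogeneity of both metrics and reduce the statement to a linear computation at one point; since $\mathcal{I}$ is already a bijection by Lemma~\ref{Lem:identify}, it is enough to check that $D\mathcal{I}$ is a linear isometry at every point.

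First I would record the equivariance of $\mathcal{I}$. For $p_0\in S^3$ the right translation $R_{[p_0]}\co [p]\mapsto[pp_0]$ of $SO(3)$ satisfies
$\mathcal{I}([pp_0])=\big(p_0^{-1}(p^{-1}ip)p_0,\ p_0^{-1}(p^{-1}jp)p_0\big)$,
that is, $\mathcal{I}\circ R_{[p_0]}=A_{[p_0]}\circ\mathcal{I}$ where $A_{[p_0]}\co (P,v)\mapsto(p_0^{-1}Pp_0,p_0^{-1}vp_0)=(P^{[p_0]},v^{[p_0]})$ is the action on $UTS^2$ of the isometry $[p_0]$ of $S^2$. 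Both maps are isometries: $R_{[p_0]}$ lifts along $S^3_2\to SO(3)$ to $r\mapsto rp_0$, a Euclidean isometry preserving $S^3_2$, hence an isometry of $SO(3)$ for the induced metric; and because $[p_0]$ preserves cross products, $A_{[p_0]}$ carries the frame $(v,-P),(P\times v,0),(0,P\times v)$ to the corresponding frame at the image point, hence preserves the metric that declares this frame orthonormal. Since right translations act transitively on $SO(3)$, it suffices to verify the isometry property of $D\mathcal{I}$ at $[1]$, where $\mathcal{I}([1])=(i,j)$.

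Next I would carry out that computation by differentiating the composite $\Psi\co S^3_2\to SO(3)\to UTS^2$, $r\mapsto\big((r/2)^{-1}i(r/2),(r/2)^{-1}j(r/2)\big)$, at $r=2$; since $S^3_2\to SO(3)$ is a local isometry, $\mathcal{I}$ is an isometry exactly when $\Psi$ is a local isometry. Here $T_2S^3_2=E^3$ with the Euclidean inner product, so $\{i,j,k\}$ is orthonormal. Using a curve $r(t)=2\gamma(t)$ with $\gamma(0)=1$, $\gamma'(0)=w/2$, and $\overline{w}=-w$ for $w\in E^3$, one finds $D\Psi_2(w)=\big(\tfrac12(iw-wi),\tfrac12(jw-wj)\big)$, whence $D\Psi_2(i)=(0,-k)$, $D\Psi_2(j)=(k,0)$, $D\Psi_2(k)=(-j,i)$. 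At $(i,j)$ the defining frame of $UTS^2$ is $(j,-i),(k,0),(0,k)$, and the three images above are $-(0,k)$, $(k,0)$, $-(j,-i)$, i.e.\ these frame vectors up to sign; in particular $D\Psi_2$ sends the orthonormal basis $\{i,j,k\}$ to an orthonormal basis. Hence $\Psi$ is a local isometry at $2$, and by the equivariance above everywhere, which finishes the proof.

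The one place needing care is the normalization bookkeeping: the factor $\tfrac12$ coming from $r\mapsto r/2$ must cancel the radius $2$ of $S^3_2$ — with the unit sphere $S^3$ in place of $S^3_2$ the same computation would produce vectors of length $2$ — and, symmetrically, the Euclidean length $\sqrt2$ of the frame vector $(v,-P)$ is precisely what the renormalized metric on $UTS^2$ absorbs, consistent with $D\Psi_2(k)=(-j,i)$ having Euclidean length $\sqrt2$ but unit length in the chosen metric. This matching is exactly the reason for passing to $S^3_2$; everything else is a short quaternionic calculation.
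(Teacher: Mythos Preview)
Your proof is correct and follows essentially the same approach as the paper: reduce to the point $[1]$ via the equivariance of $\mathcal{I}$ under right translations (which the paper phrases as the two $SO(3)$-actions being conjugate via $\mathcal{I}$), then compute the differential of $S^3_2\to SO(3)\to UTS^2$ at $2$ and obtain exactly the same images $(0,-k)$, $(k,0)$, $(-j,i)$ of $i,j,k$. Your write-up is more explicit about the equivariance and the normalization bookkeeping, but the argument is the paper's.
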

\begin{proof}
Notice that the right action of $SO(3)$ on $UTS^2$ is an isometric group action, and via right multiplication there is an isometric group action of $SO(3)$ on itself. Moreover, the two actions are conjugate via the map $\mathcal{I}$. Hence we only need to consider the tangent map at $[1] \in SO(3)$.

Consider the map
$$S^3_2\rightarrow SO(3)\rightarrow UTS^2.$$
Let
\begin{align*}
&2(\cos(s/2)+i\sin(s/2)), s \in \mathbb{R},\\
&2(\cos(s/2)+j\sin(s/2)), s \in \mathbb{R},\\
&2(\cos(s/2)+k\sin(s/2)), s \in \mathbb{R},
\end{align*}
be the three great circles containing $2$ in $S^3_2$. At the point $2$ they have tangent vectors $i$, $j$ and $k$ respectively. Via the images of the three circles in $UTS^2$, we can compute the tangent map, and $i$, $j$, $k$ are mapped to $(0,-k)$, $(k,0)$ and $(-j,i)$ at the point $(i,j) \in UTS^2$. Then by definitions of the metrics on $SO(3)$ and $UTS^2$, the tangent map at $[1]$ is an isometry.
\end{proof}

\subsection{Parallel translation and Gauss-Bonnet formula}

Given $\epsilon>0$, let $P(s)$, $s \in (-\epsilon,\epsilon)$, be a smooth curve in $S^2$, and let $v$ be a unit tangent vector at $P(0)$. Then the ordinary differential equation
$$\frac{dv(s)}{ds}=-\langle\frac{dP(s)}{ds}, v(s)\rangle P(s)$$
always has a unique smooth solution $v(s)$ such that $v(0)=v$. Since
$$\langle\frac{dv(s)}{ds}, P(s)\rangle=
-\langle\frac{dP(s)}{ds}, v(s)\rangle\langle P(s), P(s)\rangle= -\langle\frac{dP(s)}{ds}, v(s)\rangle$$
and $\langle P(0), v(0)\rangle=0$, we have $\langle P(s), v(s)\rangle=0$. Then since
$$\langle\frac{dv(s)}{ds}, v(s)\rangle=
-\langle\frac{dP(s)}{ds}, v(s)\rangle\langle P(s), v(s)\rangle=0$$
and $\langle v(0), v(0)\rangle=1$, we have $\langle v(s), v(s)\rangle=1$. Hence we get a smooth unit tangent vector field $v(s)$, which gives the parallel translation of $v$ along $P(s)$. We also get a smooth curve $(P(s),v(s))$ in $UTS^2$, which is the lift of $P(s)$ passing $(P(0),v)$.

If $P(s)$ is a geodesic in $S^2$, then its unit tangent vector field gives a parallel translation, and other parallel translations along $P(s)$ can be obtained by, at each point $P(s)$, rotating the unit tangent vector around the vector $P(s)$ by the same angle. For a piecewise geodesic $L(s)$, $s \in [0,1]$, and a unit tangent vector $v$ at $L(0)$, we can get a parallel translation of $v$ along $L(s)$ piecewise, and obtain a unit tangent vector $v'$ at $L(1)$.

Let $L(s)$, $s \in [0,1]$, be a simple closed piecewise geodesic in a closed hemisphere of $S^2$. Since $L(1)=L(0)$, $v$ and $v'$ are in the same tangent space. $L(s)$ bounds a subsurface $S$ in the hemisphere. Looking from outside of $S^2$ and moving along $L(s)$, $S$ is at the left side or the right side of $L(s)$. If $S$ is at the left (right) side of $L(s)$, then the vector $v'$ can be obtained from $v$ by a right (left) hand rotation with an angle $\Theta(L(s)) \in (0,2\pi]$. Let $Area(S)$ be the area of $S$. We have the following Gauss-Bonnet formula in this simple case.

\begin{lemma}\label{Lem:GBthm}
Under above assumptions, $\Theta(L(s))=Area(S)$.
\end{lemma}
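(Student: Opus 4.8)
The plan is to derive the Gauss--Bonnet statement $\Theta(L(s)) = \mathrm{Area}(S)$ directly from the setup that has already been built, rather than re-proving classical Gauss--Bonnet. The key observation is that the lift $(P(s),v(s))$ of a piecewise geodesic together with its parallel field is exactly the curve in $UTS^2$ that the parallel-translation ODE produces, and under the isometry $\mathcal{I}$ of Lemma~\ref{Lem:metric} this corresponds to a curve in $SO(3)$. Since $\mathcal{I}$ identifies the Hopf-induced fibration of $SO(3)$ with the bundle projection $UTS^2 \to S^2$, the holonomy angle $\Theta(L(s))$ measures exactly how far around a fibre the endpoint $[p']$ sits relative to $[p]$, measured by the parameter $\theta$ of $\{[e^{i\theta}p]\}$. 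By the last sentence of Lemma~\ref{Lem:identify}, the $\theta$-increasing direction is the left-hand rotation around $p^{-1}ip$; comparing with the sign convention in the statement (right-hand rotation when $S$ lies on the left), the claim becomes: the net fibre displacement of the lift of $L$ equals $\mathrm{Area}(S)$.

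First I would reduce to the case of a geodesic triangle, or more simply argue by the additivity of both sides: if a region $S$ is cut by a geodesic arc into $S_1$ and $S_2$ with boundaries $L_1$, $L_2$, then $\mathrm{Area}(S) = \mathrm{Area}(S_1) + \mathrm{Area}(S_2)$, and the holonomy angles add because parallel translation along the common arc one way cancels the other way (the contributions of the interior arc to the two loop holonomies are inverse rotations about the same axes along that arc). Hence it suffices to prove the formula for a geodesic triangle, or even just to establish the infinitesimal version: for a small geodesic ``bigon''/triangle the holonomy angle is, to first order, the enclosed area. Then a standard subdivision-and-limit argument upgrades it to arbitrary simple closed piecewise geodesics in a hemisphere.

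For the infinitesimal computation I would take advantage of the explicit model. Parametrize a small geodesic loop near a point $Q \in S^2$; lift it via the ODE to $UTS^2$ and transport to $SO(3) \cong S^3_2/\{\pm 1\}$ using the tangent-map computation already carried out in the proof of Lemma~\ref{Lem:metric} (where $i,j,k$ at $2 \in S^3_2$ map to $(0,-k),(k,0),(-j,i)$ at $(i,j)$). The horizontal lift of a loop bounding area $A$ in $S^2$ returns to the same point of $S^2$ but is displaced along the fibre; computing this displacement for a coordinate-rectangle loop reduces to the curvature $2$-form of the Hopf connection, which in this normalization is exactly the area form of $S^2$ (this is precisely why the ``enlarged'' radius-$2$ sphere is used: it makes the constant equal to $1$). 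Matching the rotation sense with Lemma~\ref{Lem:identify} fixes the sign so that $S$ on the left of $L$ gives a right-hand rotation of the transported vector, consistent with the statement.

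The main obstacle I expect is bookkeeping of orientations and signs: keeping straight which way ``left'' means when viewed from outside $S^2$, how that translates into the direction of the holonomy rotation of $v$, and how both relate to the $\theta$-increasing (left-hand-rotation) direction along a Hopf fibre as normalized in Lemma~\ref{Lem:identify}. Getting the overall normalization constant to be exactly $1$ (not $2$ or $1/2$) also requires care, but this is exactly what the radius-$2$ choice and the metric normalization on $UTS^2$ (declaring $(v,-P)$ to have norm $1$ rather than $\sqrt{2}$) are designed to arrange, so I would verify it on the one explicit great-circle computation and then invoke homogeneity. Everything else --- additivity, subdivision, passing to the limit for general piecewise-geodesic $L$ inside a hemisphere --- is routine.
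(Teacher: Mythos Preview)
Your approach is correct but takes a substantially different route from the paper. The paper's proof is essentially two sentences: for a geodesic triangle, $\Theta(L(s)) = \mathrm{Area}(S)$ is nothing but the classical spherical-excess formula (parallel transport along a geodesic keeps the angle with the tangent constant, so the total holonomy is the angle defect $\sum \alpha_i - \pi$, which equals the area); for a general geodesic polygon in a hemisphere, one cuts $S$ by a geodesic diagonal into two smaller regions, observes that both $\Theta$ and $\mathrm{Area}$ are additive under this decomposition (the interior arc contributes canceling parallel transports), and inducts. You share the additivity/subdivision step, but for the base case you pass through the Hopf picture: identify the holonomy with a fibre displacement in $UTS^2 \cong SO(3)$, then compute the curvature of the associated connection and integrate. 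That works, and it nicely anticipates how Lemma~\ref{Lem:lift} will later package the result, but it imports connection-curvature machinery and a normalization check where the paper simply quotes the area formula for a spherical triangle. In particular, the logical flow in the paper is the reverse of yours---Lemma~\ref{Lem:GBthm} is proved purely on $S^2$ and then \emph{used} to deduce the fibre-displacement statement---so your route is valid but longer than necessary for this lemma taken on its own.
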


\begin{proof}
If $L(s)$ is a geodesic triangle in some hemisphere of $S^2$, then the formula is the same as the area formula of the geodesic triangle in $S^2$.

In general case, $L(s)$ is a geodesic polygon. We can assume that there is a geodesic $\gamma$ in $S$ from $L(0)$ to $L(s_0)$, $0<s_0<1$. Suppose that it divides $S$ into $S_1$ and $S_2$, it divides $L(s)$ into $\alpha$ and $\beta$, and the boundaries of $S_1$ and $S_2$ are $\alpha\gamma^{-1}$ and $\gamma\beta$ respectively. Then since $Area(S)\leq 2\pi$, if
$$\Theta(\alpha\gamma^{-1})=Area(S_1),\Theta(\gamma\beta)=Area(S_2),$$ we must have $\Theta(L(s))=Area(S)$. Hence we can divide $S$ into geodesic triangles, and get the result by induction.
\end{proof}

\subsection{Geometric properties of $S^3_2$}

Given $\epsilon>0$, let $P(s)$, $s \in (-\epsilon,\epsilon)$, be a smooth curve in $S^2$, and let $v$ be a unit tangent vector at $P(0)$. Let $(P(s),v(s))$ be the lift of $P(s)$ passing $(P(0),v)$ in $UTS^2$, and let $f(s)$ be the cross product of $P(s)$ and $v(s)$. By the definition of the metric on $UTS^2$, the tangent subspace at the point $(P(s),v(s))$, which is orthogonal to $(0,f(s))$, is isometric to the tangent space at the point $P(s)$.

Since $dv(s)/ds$ is orthogonal to $f(s)$, in $UTS^2$ the lift $(P(s),v(s))$ intersects the fibres orthogonally. Then locally the lift $(P(s),v(s))$ has the same length as the curve $P(s)$. Hence for a piecewise geodesic $L(s)$ in $S^2$, its lift $(L(s),v(s))$ in $UTS^2$ is also a piecewise geodesic, which is locally isometric to $L(s)$. It intersects the fibres orthogonally and it has the same angles as $L(s)$ at the corner points.

Let $L(s)$, $s \in [0,1]$, be a simple closed piecewise geodesic which bounds a subsurface $S$ in a closed hemisphere of $S^2$. Let $(L(s),v(s))$ be a lift of $L(s)$ from $(L(0),v)$ to $(L(1),v')=(L(0),v')$. By Lemma \ref{Lem:identify}, \ref{Lem:metric} and \ref{Lem:GBthm}, suppose that $L(0)=p^{-1}ip$ for some $p \in S^3$, then a left (right) hand rotation around the fibre at $L(0)$ in $UTS^2$ with the angle $\Theta(L(s)) \in (0,2\pi]$ is the same as a movement along the $\theta$-increasing ($\theta$-decreasing) direction of the fibre $\{[e^{i\theta}p] \mid \theta\in \mathbb{R}\}$ in $SO(3)$ with distance $\Theta(L(s))$, which is equal to $Area(S)$. Since the covering map from $S^3_2$ to $SO(3)$ (or $UTS^2$) preserves local metric. We have the following lemma.

\begin{lemma}\label{Lem:lift}
The following map $\mathcal{P}$ from the three dimensional sphere of radius $2$ in $\mathbb{H}$ to the unit sphere in $E^3$, which is generated by $i$, $j$, $k$ in $\mathbb{H}$, gives an enlarged Hopf fibration.
\begin{align*}
\mathcal{P} : S^3_2 & \rightarrow S^2\\
r & \mapsto r^{-1}ir
\end{align*}
Moreover, with the spherical geometries on $S^3_2$ and $S^2$ we have

(a) Each fibre has the form $\{e^{i\theta/2}r \mid \theta \in \mathbb{R}\}$ with $r \in S^3_2$, and has length $4\pi$.

(b) At $r \in S^3_2$, the tangent subspace which is orthogonal to the fibre is isometric to the tangent space at $\mathcal{P}(r)$.

(c) Given $r \in S^3_2$, each piecewise geodesic $L(s)$, $s \in [0,1]$, in $S^2$ with $L(0)=\mathcal{P}(r)$ can be uniquely lifted to a piecewise geodesic $\widetilde{L}(s)$ in $S^3_2$ with $\widetilde{L}(0)=r$, such that

(c1) $\widetilde{L}(s)$ intersects the fibres orthogonally.

(c2) $\widetilde{L}(s)$ is locally isometric to $L(s)$, and $\widetilde{L}(s)$ has the same angles as $L(s)$ at the corner points.

(d) If the $L(s)$ in (c) is simple and closed, and it bounds a subsurface $S$ in a closed hemisphere of $S^2$, then $\widetilde{L}(1)=e^{-iArea(S)/2}\widetilde{L}(0)$ or $\widetilde{L}(1)=e^{iArea(S)/2}\widetilde{L}(0)$, depending on $S$ is at the left side or the right side of $L(s)$.
\end{lemma}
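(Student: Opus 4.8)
The plan is to reduce everything to the already-established facts about $SO(3)$ and $UTS^2$ via the two-to-one covering map $S^3_2 \to SO(3)$, $r \mapsto [r/2]$, which by construction preserves the local metric. First I would set up the dictionary: a point $r \in S^3_2$ maps to $[r/2] \in SO(3)$, and composing with $\mathcal{I}$ from Lemma~\ref{Lem:identify} sends it to $((r/2)^{-1}i(r/2), (r/2)^{-1}j(r/2)) = (r^{-1}ir, r^{-1}jr) \in UTS^2$; projecting to the base point of $UTS^2$ gives $r^{-1}ir = \mathcal{P}(r)$, so $\mathcal{P}$ is indeed the composition of the covering map, $\mathcal{I}$, and the $UTS^2 \to S^2$ projection. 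Since each of these is a fibration (the first by Lemma~\ref{Lem:2to1} applied to the antipodal map, $\mathcal{I}$ respects the fibrations by Lemma~\ref{Lem:identify}, and the last is the unit-tangent-bundle projection), $\mathcal{P}$ is a fibration, and it is the enlarged Hopf fibration by the displayed identification of $\mathbb{C}P^1$ with $S^2$ following Lemma~\ref{Lem:identify}.

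For part~(a): the fibre of the Hopf fibration through $p \in S^3$ is $\{e^{i\theta}p \mid \theta \in \mathbb{R}\}$, so writing $r = 2p$ the fibre through $r$ in $S^3_2$ is $\{2e^{i\theta}p \mid \theta \in \mathbb{R}\} = \{e^{i\theta}r \mid \theta\in\mathbb{R}\}$; reparametrising by $\theta \mapsto \theta/2$ gives the stated form $\{e^{i\theta/2}r \mid \theta\in\mathbb{R}\}$, and since this is a great circle in the radius-$2$ sphere its length is $2\pi\cdot 2 = 4\pi$. Part~(b) is immediate from the paragraph preceding the lemma: the tangent subspace at $(P(s),v(s)) \in UTS^2$ orthogonal to $(0,f(s))$ is isometric to $T_{P(s)}S^2$, the fibre direction at $[r/2]$ corresponds to $(0,f)$ under $\mathcal{I}$ and Lemma~\ref{Lem:metric}, and the covering map is a local isometry, so the orthogonal complement of the fibre at $r$ is isometric to $T_{\mathcal{P}(r)}S^2$. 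Part~(c) follows from the existence and uniqueness of parallel translation: given $L(s)$ with $L(0) = \mathcal{P}(r)$, pick the unique $v$ with $(\mathcal{I}\circ[\cdot/2])(r) = (L(0),v)$, take the lift $(L(s),v(s))$ in $UTS^2$ through $(L(0),v)$ (which exists and is unique by the ODE for parallel translation), and lift it through $r$ in $S^3_2$ using the covering; uniqueness of the path-lift and of the $UTS^2$-lift gives uniqueness of $\widetilde{L}$. Properties (c1) and (c2) are exactly the assertions, proved in the paragraph before the lemma, that the lift meets fibres orthogonally and is locally isometric with matching corner angles, transported through the local isometry to $S^3_2$.

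Part~(d) is the one requiring genuine care, and I expect it to be the main obstacle — specifically, pinning down the sign and the factor of $1/2$. Here I would invoke Lemma~\ref{Lem:GBthm}: for the simple closed piecewise geodesic $L(s)$ bounding $S$ in a closed hemisphere, the parallel-transport holonomy $v \mapsto v'$ is a rotation by $\Theta(L(s)) = Area(S)$, which is a right-hand rotation about $\mathcal{P}(r)$ if $S$ lies to the left of $L(s)$ and a left-hand rotation if $S$ lies to the right. By the last clause of Lemma~\ref{Lem:identify}, the $\theta$-increasing direction of $\{[e^{i\theta}p]\mid\theta\in\mathbb{R}\}$ in $SO(3)$ corresponds to left-hand rotation about $p^{-1}ip$; so a left-hand rotation by angle $Area(S)$ corresponds to $\theta$ increasing by $Area(S)$, i.e.\ to moving from $[p]$ to $[e^{iArea(S)/2}p]$ (the factor $1/2$ is the same $\theta \mapsto \theta/2$ reparametrisation as in part~(a), coming from the $e^{-2i\theta}$ in the computation in the proof of Lemma~\ref{Lem:identify}), and a right-hand rotation correspondingly to $[e^{-iArea(S)/2}p]$. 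Lifting back to $S^3_2$ via $r = 2p$: if $S$ is to the right of $L(s)$ the holonomy is a left rotation, so $\widetilde{L}(1) = e^{iArea(S)/2}\widetilde{L}(0)$; if $S$ is to the left it is a right rotation, so $\widetilde{L}(1) = e^{-iArea(S)/2}\widetilde{L}(0)$. The hypothesis $Area(S) \in (0,2\pi]$ keeps $\Theta$ in $(0,2\pi]$ so there is no ambiguity modulo the fibre period $4\pi$, and the hemisphere hypothesis is what makes Lemma~\ref{Lem:GBthm} applicable; I would double-check the left/right convention against the explicit tangent-map computation $i,j,k \mapsto (0,-k),(k,0),(-j,i)$ in the proof of Lemma~\ref{Lem:metric} to be sure the orientation bookkeeping is consistent.
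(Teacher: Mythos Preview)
Your proposal is correct and follows essentially the same route as the paper: the lemma is stated in the paper as a summary of the development in \S2.2--2.5, and your argument reproduces that development faithfully---factoring $\mathcal{P}$ through the covering $S^3_2\to SO(3)$ and the identification $\mathcal{I}$ of Lemma~\ref{Lem:identify}, reading off (b) and (c) from the parallel-translation discussion and Lemma~\ref{Lem:metric}, and obtaining (d) by combining Lemma~\ref{Lem:GBthm} with the left/right-rotation convention of Lemma~\ref{Lem:identify}. Your sign and factor-of-$1/2$ bookkeeping in (d) is correct; the only cosmetic point is that when you write ``$\theta$ increasing by $Area(S)$'' you should be explicit that you mean arclength along the fibre in $S^3_2$ (equivalently, the parameter of Lemma~\ref{Lem:identify} increasing by $Area(S)/2$), since the paper uses two different $\theta$-parametrisations.
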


\subsection{Orthogonal actions preserving the Hopf fibration}

Let $\mathcal{P} : S^3_2\rightarrow S^2$ be the enlarged Hopf fibration. Each fibre has the form $\{e^{i\theta/2}r \mid \theta \in \mathbb{R}\}$ with $r \in S^3_2$. Let $[p,q]$ be an element in $SO(4)$, where $p,q \in S^3$.

\begin{lemma}\label{Lem:FPaction}
If $[p,q]$ preserves the fibration given by $\mathcal{P}$, then $p=e^{i\tau}$ or $p=e^{i\tau}j$ for some $\tau \in \mathbb{R}$, and via $\mathcal{P}$ it induces an isometry on $S^2$, which is given by $[q]$ or the composition of $[q]$ and the antipodal map of $S^2$.
\end{lemma}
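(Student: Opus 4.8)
The plan is to analyze what the condition "$[p,q]$ preserves the fibration" forces on $p$, using the explicit description of the fibres as the left cosets $\{e^{i\theta/2}r \mid \theta\in\mathbb{R}\}$, and then to read off the induced map on $S^2$ via the formula $\mathcal{P}(r)=r^{-1}ir$. The action of $[p,q]$ on $S^3_2$ sends $r$ to $p^{-1}rq$ (up to the factor of $2$, which is irrelevant here since the fibres and the formula $r\mapsto r^{-1}ir$ are homogeneous under scaling in the appropriate sense). Preserving the fibration means: for every $r$, the image of the fibre through $r$ is again a fibre, i.e.
\[
p^{-1}\{e^{i\theta/2}r\mid\theta\in\mathbb{R}\}\,q = \{e^{i\phi/2}r'\mid\phi\in\mathbb{R}\}
\]
for some $r'$ depending on $r$. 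Equivalently, $p^{-1}e^{i\theta/2}p$ must lie in the circle group $\{e^{i\phi/2}\}$ for all $\theta$ (after conjugating the coset description), since $\{e^{i\theta/2}r\}q = \{e^{i\theta/2}(rq)\}$ already has the right form on the right, so the only obstruction is that left multiplication by $e^{i\theta/2}$ composed with the $p^{-1}(\cdot)$ on the left must again produce a left coset of the circle group.

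First I would make this precise: the fibres are exactly the orbits of the left $U(1)$-action $r\mapsto e^{i\theta/2}r$. The map $[p,q]\colon r\mapsto p^{-1}rq$ carries the orbit of $r$ to $\{p^{-1}e^{i\theta/2}rq\}$. For this to be a left $U(1)$-orbit we need $p^{-1}e^{i\theta/2}p$ to be of the form $e^{i\psi(\theta)/2}$ for all $\theta$; that is, conjugation by $p$ must preserve the circle $U(1)=\{e^{i\theta/2}\}\subset S^3$. Now $p^{-1}(\cdot)p$ acting on the imaginary unit vectors is the rotation $[p]\in SO(3)$ of $E^3$ fixing the axis through $p^{-1}ip$; it preserves the circle $U(1)$ (whose tangent direction at $1$ is $i$) iff $[p]$ fixes $\pm i$, i.e. iff $p^{-1}ip=\pm i$. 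The case $p^{-1}ip=i$ means $p$ commutes with $i$, forcing $p\in\mathbb{C}\cap S^3=\{e^{i\tau}\}$. The case $p^{-1}ip=-i$ means $p$ anticommutes with $i$; writing $p=z_1+z_2j$ and using $jz=\bar z j$ one checks this forces $z_1=0$, hence $p=z_2j=e^{i\tau}j$ for some $\tau$. This is the main structural step, and the quaternion bookkeeping ($p i p^{-1}$, the relation $jz=\bar z j$) is where care is needed, though it is routine.

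Once $p$ is known, I compute the induced isometry on $S^2$. By definition $\mathcal{P}([p,q]\cdot r)=(p^{-1}rq)^{-1}i(p^{-1}rq)=q^{-1}r^{-1}(pip^{-1})rq$. If $p=e^{i\tau}$ then $pip^{-1}=i$, so this equals $q^{-1}(r^{-1}ir)q=q^{-1}\mathcal{P}(r)q=[q](\mathcal{P}(r))$, giving exactly the rotation $[q]$ of $S^2$. If $p=e^{i\tau}j$ then $pip^{-1}=e^{i\tau}jij^{-1}e^{-i\tau}=e^{i\tau}(-i)e^{-i\tau}=-i$, so we get $q^{-1}(-\mathcal{P}(r))q=-[q](\mathcal{P}(r))$, which is $[q]$ followed by the antipodal map $x\mapsto -x$ of $S^2$. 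In both cases the induced map is as claimed, completing the proof. I expect the only genuine obstacle to be organizing the first step cleanly — pinning down that "preserves the left $U(1)$-orbit foliation" is equivalent to "conjugation by $p$ preserves the circle $U(1)$" — after which everything is a short direct computation.
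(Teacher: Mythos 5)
Your proof is correct and follows essentially the same route as the paper's: reduce to $[p,1]$, pin down $p$, and then compute $\mathcal{P}(p^{-1}rq)=(r^{-1}(pip^{-1})r)^{[q]}$, which gives $[q]$ or $[q]$ composed with the antipodal map according as $pip^{-1}=i$ or $-i$. The only (cosmetic) difference is in how $p$ is determined: the paper writes $p^{-1}=z_1+z_2j$ and requires the ratio $[e^{i\theta/2}z_1:e^{-i\theta/2}z_2]$ for the image of the fibre through $2$ to be constant, forcing $z_1=0$ or $z_2=0$, whereas you observe that preserving all left cosets of $\{e^{i\theta/2}\}$ is equivalent to conjugation by $p$ preserving that circle, i.e.\ $p^{-1}ip=\pm i$, and then solve the (anti)commutation relation.
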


\begin{proof}
Since $[p,q]=[1,q][p,1]$ and $[1,q]$ preserves the fibration, we only need to consider $[p,1]$. Let $p^{-1}=z_1+z_2j$, then for the fibre $\{2e^{i\theta/2} \mid \theta \in \mathbb{R}\}$, we have
$$p^{-1}(2e^{i\theta/2})=2e^{i\theta/2}z_1+2e^{-i\theta/2}z_2j.$$
If $[p,1]$ preserves the fibration, then $[e^{i\theta/2}z_1:e^{-i\theta/2}z_2]$ is a constant. Hence $z_2=0$ or $z_1=0$, namely $p=e^{i\tau}$ or $p=e^{i\tau}j$ for some $\tau \in \mathbb{R}$. Then for $r \in S^3_2$, we have
$$\mathcal{P}(r^{[p,q]})= (p^{-1}rq)^{-1}i(p^{-1}rq)= (r^{-1}(pip^{-1})r)^{[q]}.$$
Hence if $p$ is $e^{i\tau}$ or $e^{i\tau}j$, then $\mathcal{P}(r^{[p,q]})$ is equal to $\mathcal{P}(r)^{[q]}$ or $-\mathcal{P}(r)^{[q]}$.
\end{proof}

\begin{lemma}\label{Lem:rotate}
Given $q \in S^3$, then it can be written as $q=\cos(\phi/2)+u\sin(\phi/2)$,
where $\phi \in \mathbb{R}$ and $u \in S^2$. Then $[q]$ is the left hand rotation around $u$ with angle $\phi$, and for $r \in \mathcal{P}^{-1}(u)$, we have $r^{[1,q]}=e^{i\phi/2}r$.
\end{lemma}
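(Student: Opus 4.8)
The plan is to prove the two assertions in Lemma \ref{Lem:rotate} essentially by direct quaternionic computation, using only facts already established in the excerpt. First I would verify the claim about $[q]$. Write $q=\cos(\phi/2)+u\sin(\phi/2)$ with $u\in S^2$; this is just the standard polar form of a unit quaternion, legitimate since $q\in S^3$ forces the scalar and vector parts to have squared norms summing to $1$. Recall that $[q]$ is the map $h\mapsto q^{-1}hq=\bar qhq$ on $E^3$. The key computation is to show $u$ is fixed: since $u\in S^2\subset\mathbb H$ satisfies $u^2=-1$, it commutes with $q=\cos(\phi/2)+u\sin(\phi/2)$, so $\bar quq=\bar qqu=u$. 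Thus $[q]$ fixes the axis $u$, hence also fixes the antipode $-u$ and acts as a rotation of the plane $u^\perp\subset E^3$. To identify the angle and sense of this rotation, I would pick a unit vector $w\in E^3$ orthogonal to $u$, note $uw=-wu$ and $uw\in S^2$ is a third orthonormal vector completing $(u,w,uw)$ to a right-handed frame (this uses the convention fixed in the excerpt that $i,j,k$ form a right-handed orthonormal system), and then expand
\begin{align*}
\bar q w q &= (\cos(\phi/2)-u\sin(\phi/2))\,w\,(\cos(\phi/2)+u\sin(\phi/2))\\
&= \cos^2(\phi/2)\,w + \cos(\phi/2)\sin(\phi/2)\,(wu - uw) - \sin^2(\phi/2)\,uwu\\
&= (\cos^2(\phi/2)-\sin^2(\phi/2))\,w + 2\cos(\phi/2)\sin(\phi/2)\,wu\\
&= \cos\phi\, w + \sin\phi\,(wu).
\end{align*}
Here I used $uwu = -u^2w = w$ when $wu=-uw$, and $wu-uw = 2wu$. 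Comparing with the description of a left-hand rotation about $u$: rotating $w$ by angle $\phi$ about $u$ in the left-handed sense sends $w$ to $\cos\phi\,w+\sin\phi\,(wu)$ precisely because $(u,w,wu)$ — note $wu=-uw$ — is the orientation making this the correct sign; one sanity-checks with $u=i$, $w=j$, $q=\cos(\phi/2)+i\sin(\phi/2)$, where the formula gives $\bar qjq=\cos\phi\,j-\sin\phi\,k$, and this agrees with the left-hand-rotation convention already used in Lemma \ref{Lem:identify} (where $e^{-2i\theta}j = \cos(2\theta)j-\sin(2\theta)k$ was identified with the left-hand rotation). So the careful bookkeeping of the sign is really just a matter of matching the convention fixed earlier in the paper.

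Second I would prove the fibre statement. Fix $r\in\mathcal P^{-1}(u)$, i.e. $r^{-1}ir=u$, equivalently $ir=ru$. Then
$$
r^{[1,q]} = rq = r(\cos(\phi/2)+u\sin(\phi/2)) = \cos(\phi/2)\,r + \sin(\phi/2)\,ru = \cos(\phi/2)\,r + \sin(\phi/2)\,ir = e^{i\phi/2}r,
$$
using $ru=ir$ in the middle and the definition $e^{i\phi/2}=\cos(\phi/2)+i\sin(\phi/2)$ at the end. This is the claimed identity.

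The only genuine subtlety — the ``hard part'' in a proof that is otherwise pure computation — is pinning down that the rotation is \emph{left-handed} rather than right-handed with the stated axis and angle, i.e. getting the sign conventions consistent with those fixed earlier in Lemma \ref{Lem:identify} and in the description of parallel translation. Everything else (polar form, the axis being fixed, the $e^{i\phi/2}r$ identity) is a short manipulation in $\mathbb H$. I would therefore spend the bulk of the write-up on the $\bar qwq$ expansion and on explicitly checking the orientation against the $(i,j,k)$ right-handed frame convention, and treat the rest as immediate.
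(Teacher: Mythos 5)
Your proof is correct, and the second half (the identity $r^{[1,q]}=e^{i\phi/2}r$ via $ir=ru$) is verbatim the paper's argument. For the first assertion, however, you take a genuinely different route. The paper does not expand $\bar qwq$ at all: it picks $p\in S^3$ with $p^{-1}ip=u$, observes that $p^{-1}e^{i\phi/2}p=q$, and transports the known base case ``$[e^{i\phi/2}]$ is the left hand rotation around $i$ by $\phi$'' along the conjugation $[q]=[p^{-1}][e^{i\phi/2}][p]$; since $[p]$ is an orientation-preserving isometry carrying $i$ to $u$, the handedness is carried along for free. You instead compute directly that $u$ commutes with $q$ (so the axis is fixed) and that $\bar qwq=\cos\phi\,w+\sin\phi\,(wu)$ on $u^\perp$, then pin down the sense of rotation by matching the frame $(u,w,uw)$ against the $(i,j,k)$ convention. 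Both are valid; the paper's conjugation trick is shorter and makes the orientation bookkeeping automatic, while your computation is more self-contained and makes explicit the formula for the rotation on $u^\perp$ (the one mildly delicate point in your version, asserting that $(u,w,uw)$ is right-handed for every such pair, is exactly what the conjugation argument avoids having to justify, though it does follow from $uw=u\times w$ for orthogonal pure imaginaries). Note also that both arguments ultimately lean on the same anchor, namely the identification of $[e^{i\phi/2}]$ acting on $j$ with the left-hand rotation convention set up in Lemma \ref{Lem:identify}.
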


\begin{proof}
There exists $p \in S^3$ such that $p^{-1}ip=u$. Then $p^{-1}e^{i\phi/2}p=q$. Since $[e^{i\phi/2}]$ is a left hand rotation around $i$ with angle $\phi$ and $u^{[p^{-1}]}=i$, $[q]=[p^{-1}][e^{i\phi/2}][p]$ must be the left hand rotation around $u$ with angle $\phi$.

If $r \in \mathcal{P}^{-1}(u)$, then $ir=ru$. Hence $r^{[1,q]}=r\cos(\phi/2)+ru\sin(\phi/2)=e^{i\phi/2}r$.
\end{proof}

\begin{lemma}\label{Lem:geodesic}
The geodesic passing $2 \in S^3_2$ and intersecting the fibres orthogonally has the form $2(\cos(s/2)+e^{i\tau}j\sin(s/2))$, $s \in \mathbb{R}$, where $\tau$ is some fixed real number. Then the $\pi$-rotation around the geodesic has the form $[e^{i\tau}j,e^{i\tau}j]$.
\end{lemma}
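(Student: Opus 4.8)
The plan is to prove the two assertions of Lemma~\ref{Lem:geodesic} separately, using the earlier structural results as black boxes. The first claim identifies the geodesics through $2$ that meet the Hopf fibres orthogonally; the second computes the $\pi$-rotation about such a geodesic as an element of $SO(4)$ written in the $[p,q]$-notation. I expect the first claim to be essentially a direct consequence of Lemma~\ref{Lem:lift}(c) and the second to require a short but slightly delicate verification that a candidate element of $SO(4)$ both fixes the geodesic pointwise and acts as $-1$ on the orthogonal complement of its tangent line.

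For the first claim, I would start from Lemma~\ref{Lem:lift}: the fibre through $2 \in S^3_2$ is $\{2e^{i\theta/2}\mid\theta\in\mathbb{R}\}$, and $\mathcal{P}(2)=i$. By part~(c) of that lemma, every piecewise geodesic in $S^2$ issuing from $i$ lifts uniquely to a piecewise geodesic in $S^3_2$ through $2$ meeting the fibres orthogonally; in particular a geodesic through $i$ in $S^2$ lifts to such a geodesic. Conversely, any geodesic through $2$ meeting the fibres orthogonally projects under $\mathcal{P}$ to a geodesic through $i$ (since by part~(b) the projection is a local isometry on the orthogonal subspace), so it is the lift of a geodesic through $i$. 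A geodesic through $i$ in $S^2=\{xi+yj+zk\}$ has the form $\cos(s/2)\,i + \sin(s/2)\,w$ with $w\in S^2$ a unit vector orthogonal to $i$, i.e.\ $w = e^{i\tau}j$ (the $jk$-plane, rotated, since $e^{i\tau}j = \cos\tau\, j + \sin\tau\, k$). Then I would check directly that the curve $r(s)=2(\cos(s/2)+e^{i\tau}j\sin(s/2))$ in $S^3_2$ is a unit-speed great-circle arc, that $\mathcal{P}(r(s)) = r(s)^{-1} i\, r(s)$ equals $\cos(s)\,i + \sin(s)\,(\dots)$ — a geodesic through $i$ — and that $r'(s) = e^{i\tau}j\cdot(\text{something})$ is orthogonal to the fibre direction $i\,r(s)$; the last is a one-line quaternion computation using $\langle e^{i\tau}j, e^{i\tau}j\rangle$-type identities. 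This pins down the geodesic as claimed.

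For the second claim, write $a=e^{i\tau}j\in S^2$ and consider the element $[a,a]\in SO(4)$. First note $[a,a]$ fixes $1\in\mathbb{H}$ (it is of the form $[p]$ with $p=a$), and I must check it fixes every point $r(s)=2(\cos(s/2)+a\sin(s/2))$ on the geodesic: since $a^{-1}=-a$ (as $a\in S^2$, $a^2=-1$), we get $a^{-1}r(s)a = -a\,(2\cos(s/2)+2a\sin(s/2))\,a = 2\cos(s/2) + 2a\sin(s/2)\cdot(-a^{-1}a) \cdots$ — more carefully, $-a\cdot 1\cdot a = -a^2 = 1$ and $-a\cdot a\cdot a = -a\cdot(-1) = a$, so $[a,a]$ fixes both $1$ and $a$, hence fixes the whole $2$-plane they span, hence fixes $r(s)$ pointwise. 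Next I must show $[a,a]$ acts as $-1$ on the orthogonal $2$-plane $\{h : \langle h,1\rangle = \langle h,a\rangle = 0\}$: pick an orthonormal pair $b,c$ in this plane with $\{1,a,b,c\}$ oriented, note $b,c$ anticommute with $a$, so $a^{-1}ba = -ab a = -a\cdot(-ab)\cdot \cdots$; using $ab = -ba$ and $a^2=-1$ one computes $a^{-1}ba = -b$, and similarly $a^{-1}ca = -c$. Hence $[a,a]$ is $-1$ on the orthogonal plane, which characterizes it as the $\pi$-rotation about the geodesic. Finally I invoke Lemma~\ref{Lem:2to1}: the only other preimage of this rotation under $(p,q)\mapsto[p,q]$ is $(-a,-a)$, and both represent the same element, so "the $\pi$-rotation has the form $[e^{i\tau}j, e^{i\tau}j]$" is justified.

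The main obstacle, such as it is, will be the bookkeeping of quaternion signs in the second claim — specifically verifying $a^{-1}ba=-b$ and $a^{-1}ca=-c$ cleanly, and being careful that "the" $\pi$-rotation is well-defined (it is the unique nontrivial isometry of $S^3_2$ fixing the geodesic pointwise, so the two lifts $(a,a)$ and $(-a,-a)$ give the same rotation). Everything else reduces to already-established facts: the form of the fibres and the orthogonal-lift property from Lemma~\ref{Lem:lift}, and the $2$-to-$1$ covering $S^3\times S^3\to SO(4)$ from Lemma~\ref{Lem:2to1}. I would present the geodesic identification first, then the rotation computation, keeping each quaternion manipulation to one or two displayed lines.
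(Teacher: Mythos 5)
Your proposal is correct and reaches the lemma by essentially the same computations as the paper; I only note the small differences. For the identification of the geodesic, the paper writes every geodesic through $2$ as $(2e^{is/2})^{[p,p]}$, observes that its tangent vector at $2$ is $i^{[p]}$, forces $i^{[p]}=e^{i\tau}j$ by orthogonality to the fibre direction $i$, and then uses the $S^1$-family $[1,(e^{i\theta/2})^{[p,p]}]$, which preserves both the fibration and the geodesic, to propagate orthogonality along the whole circle; you instead route the argument through the lifting statement of Lemma~\ref{Lem:lift}(c) and the classification of $S^2$-geodesics through $i$. Both are sound (your direct verification of the explicit curve, together with uniqueness of a geodesic with given initial point and tangent, closes the converse direction without needing the submersion fact you gesture at), but the paper's version has the small bonus of showing as a byproduct that every geodesic of $S^3_2$ meets the fibres at a constant angle. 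For the $\pi$-rotation, the paper simply checks that $[e^{i\tau}j,e^{i\tau}j]$ fixes the geodesic pointwise and sends $2e^{i\theta/2}$ to $2e^{-i\theta/2}$; this is the same content as your $+1$/$-1$ eigenplane decomposition of $h\mapsto a^{-1}ha$, and your sign computations ($a^{-1}=-a$, $a^{-1}ba=-b$ for $b$ orthogonal to $1$ and $a$) are correct. The only slip is the parametrization $\cos(s/2)i+\sin(s/2)w$ for a unit-speed geodesic of $S^2$ (it should be $\cos(s)i+\sin(s)w$, consistent with your later formula for $\mathcal{P}(r(s))$), which is harmless.
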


\begin{proof}
The geodesic passing $2 \in S^3_2$ has the form $(2e^{is/2})^{[p,p]}$, $s\in \mathbb{R}$, where $p \in S^3$. Its tangent vector at $2$ is $i^{[p]} \in S^2$. If it is orthogonal to the fibre $\{2e^{i\theta/2} \mid \theta\in \mathbb{R}\}$, then $i^{[p]}$ is orthogonal to $i$. Hence $i^{[p]}=e^{i\tau}j$ for some $\tau \in \mathbb{R}$, and $(2e^{is/2})^{[p,p]}$ has the required form. Since $[1,(e^{i\theta/2})^{[p,p]}]$, where $\theta \in \mathbb{R}$, preserves the fibration and the geodesic, the geodesic must intersect the fibration orthogonally.

The isometry $[e^{i\tau}j,e^{i\tau}j]$ fixes points in the geodesic, and for $\theta\in \mathbb{R}$ it maps $2e^{i\theta/2}$ to $2e^{-i\theta/2}$, hence it is a $\pi$-rotation around the geodesic.
\end{proof}

As a byproduct of the proof, every geodesic in $S^3_2$ intersects the fibres with the same angle. From Lemma \ref{Lem:FPaction}, \ref{Lem:rotate} and \ref{Lem:geodesic} we have the following lemma.

\begin{lemma}\label{Lem:trans}
Orthogonal actions preserving the enlarged Hopf fibration of $S^3_2$ are generated by the following elements.

(a) $[e^{i\tau/2},1]$, where $\tau \in \mathbb{R}$, is a movement of distance $\tau$ along the $\theta$-decreasing direction of all fibres $\{e^{i\theta/2}r \mid \theta\in \mathbb{R}\}$ in $S^3_2$. It induces the identity map on $S^2$.

(b) $[1,\cos(\phi/2)+u\sin(\phi/2)]$, where $\phi \in \mathbb{R}$ and $u \in S^2$, induces the left hand rotation around $u$ with angle $\phi$ on $S^2$. On the fibre $\mathcal{P}^{-1}(u)$, it is a movement along the $\theta$-increasing direction with distance $\phi$.

(c) $[e^{i\tau}j,e^{i\tau}j]$, where $\tau \in \mathbb{R}$, is the $\pi$-rotation around the geodesic
$$2(\cos(s/2)+e^{i\tau}j\sin(s/2)), s \in \mathbb{R}.$$
On $S^2$, it induces a reflection about the plane which is orthogonal to $e^{i\tau}j$.
\end{lemma}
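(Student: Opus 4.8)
The plan is to read this off from Lemmas~\ref{Lem:FPaction}, \ref{Lem:rotate} and \ref{Lem:geodesic} in two stages: first record why each of the displayed families $(a),(b),(c)$ preserves the enlarged Hopf fibration and has exactly the stated geometric meaning, then show that an arbitrary fibration-preserving $[p,q]\in SO(4)$ factors as a product of members of these families.

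For the descriptions I would argue as follows. In $(a)$ we have $r^{[e^{i\tau/2},1]}=e^{-i\tau/2}r$; since $e^{i\tau/2}$ commutes with $i$, $\mathcal P(e^{-i\tau/2}r)=r^{-1}ir=\mathcal P(r)$, so the induced map on $S^2$ is the identity and every fibre $\{e^{i\theta/2}r\}$ is carried to itself. As $\partial_\theta(e^{i\theta/2}r)$ has norm $\tfrac12|r|=1$, the parameter $\theta$ is arclength along the fibre (total length $4\pi$ by Lemma~\ref{Lem:lift}), so $r\mapsto e^{-i\tau/2}r$ is a shift by $\tau$ in the $\theta$-decreasing direction. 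For $(b)$, Lemma~\ref{Lem:rotate} is exactly the assertion about the fibre $\mathcal P^{-1}(u)$, where $r\mapsto e^{i\phi/2}r$ is a shift by $\phi$ in the $\theta$-increasing direction; on $S^2$ one computes $\mathcal P(rq)=q^{-1}\mathcal P(r)q=\mathcal P(r)^{[q]}$, identifying the induced map with $[q]$, the left-hand rotation about $u$ through $\phi$. For $(c)$, the $S^3_2$-assertion is precisely Lemma~\ref{Lem:geodesic}; for the map on $S^2$, note $e^{i\tau}j=\cos\tau\,j+\sin\tau\,k$ is a unit vector of $E^3$, so by Lemma~\ref{Lem:rotate} $[e^{i\tau}j]$ is the $\pi$-rotation about it, and by Lemma~\ref{Lem:FPaction} the induced map is the composite of $[e^{i\tau}j]$ with the antipodal map of $S^2$; since composing a $\pi$-rotation about an axis $u$ with $-\mathrm{id}_{E^3}$ is the reflection fixing the plane $u^{\perp}$, this composite is the reflection about the plane orthogonal to $e^{i\tau}j$. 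In particular each of $(a),(b),(c)$ preserves the fibration (this also follows at once from the shape of $p$ via the proof of Lemma~\ref{Lem:FPaction}).

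For the generation step I take $[p,q]\in SO(4)$ preserving the fibration and invoke Lemma~\ref{Lem:FPaction} to get $p=e^{i\sigma}$ or $p=e^{i\sigma}j$ for some $\sigma$, with $q\in S^3$ unrestricted; writing $q=\cos(\phi/2)+u\sin(\phi/2)$, $u\in S^2$, as in Lemma~\ref{Lem:rotate}, and using $[a,b][c,d]=[ac,bd]$, I obtain $[p,q]=[e^{i\sigma},1]\,[1,q]$ in the first case (a member of $(a)$ with parameter $2\sigma$, then a member of $(b)$) and $[p,q]=[e^{i\sigma}j,e^{i\sigma}j]\,[1,(e^{i\sigma}j)^{-1}q]$ in the second case (a member of $(c)$, then a member of $(b)$). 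Hence the fibration-preserving subgroup of $SO(4)$ is generated by $(a),(b),(c)$, which together with the preceding paragraph gives the lemma. (If ``orthogonal actions'' is meant to include orientation-reversing isometries, one adds the observation that such a map sends a generic fibre to a curve whose $\mathcal P$-image is a full latitude circle of $S^2$, hence cannot preserve the fibration.)

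I do not anticipate a genuine obstacle here: the mathematical content sits entirely in the three cited lemmas and the rest is bookkeeping. The two points I would handle with care are (i) keeping the conventions of Lemmas~\ref{Lem:identify}--\ref{Lem:geodesic} consistent --- left- versus right-hand rotations, $\theta$-increasing versus $\theta$-decreasing, and the normalization that $\theta$ is arclength on the fibres --- and (ii) the elementary fact used in $(c)$ that $-\mathrm{id}_{E^3}$ composed with the $\pi$-rotation about $u$ is the reflection in $u^{\perp}$.
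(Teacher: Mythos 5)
Your proposal is correct and matches the paper's route exactly: the paper states Lemma~\ref{Lem:trans} as an immediate consequence of Lemmas~\ref{Lem:FPaction}, \ref{Lem:rotate} and \ref{Lem:geodesic}, and your write-up just fills in the same deduction (the descriptions of $(a)$, $(b)$, $(c)$ and the factorization $[p,q]=[e^{i\sigma},1][1,q]$ or $[e^{i\sigma}j,e^{i\sigma}j][1,(e^{i\sigma}j)^{-1}q]$ using the classification of $p$ from Lemma~\ref{Lem:FPaction}). The bookkeeping of conventions and the composition rule $[a,b][c,d]=[ac,bd]$ is handled consistently with the paper.
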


\section{The construction of minimal surfaces}\label{Sec:construction}

\subsection{Finite reflection groups of $S^2$}

Let $u$ be an element of $S^2$. By Lemma \ref{Lem:rotate} the $\pi$-rotation around $u$ is given by $[u]$. For $p \in S^2$, its image under the antipodal map of $S^2$ is $-p=\bar{p}=p^{-1}$. Hence we can use $-[u]$ to present the reflection about the plane in $E^3$ which passes $0 \in E^3$ and is orthogonal to $u$.

In the isometric group of $S^2$, there are five classes of finite subgroups which are generated by reflections. They can be obtained from the five classes of finite subgroups of $SO(3)$ by adding suitable reflections. We use symbols {\bf C}, {\bf D}, {\bf T}, {\bf O}, {\bf I} to denote them, and list their generators as below, where $l$ is some positive integer.
\begin{align*}
{\bf C} &: [\cos\frac{\pi}{l}+i\sin\frac{\pi}{l}], -[j] \\
{\bf D} &: [\cos\frac{\pi}{l}+i\sin\frac{\pi}{l}], [k], -[j]\\
{\bf T} &: [\cos\frac{\pi}{2}+i\sin\frac{\pi}{2}], [\cos\frac{\pi}{3}+\frac{i+j+k}{\sqrt{3}}\sin\frac{\pi}{3}], -[e^{i\pi/4}k]\\
{\bf O} &: [\cos\frac{\pi}{4}+i\sin\frac{\pi}{4}], [\cos\frac{\pi}{3}+\frac{i+j+k}{\sqrt{3}}\sin\frac{\pi}{3}], -[k]\\
{\bf I} &: [\cos\frac{\pi}{2}+i\sin\frac{\pi}{2}], [\cos\frac{\pi}{3}+\frac{(\sqrt{5}+1)i+(\sqrt{5}-1)j}{2\sqrt{3}}
\sin\frac{\pi}{3}], -[k]
\end{align*}

For a finite reflection group of $S^2$, all of its reflection circles form a graph. The graph cuts $S^2$ into congruent pieces, which gives $S^2$ a tessellation. Let $\Gamma_C$, $\Gamma_D$, $\Gamma_T$, $\Gamma_O$, $\Gamma_I$ denote the graphs corresponding to {\bf C}, {\bf D}, {\bf T}, {\bf O}, {\bf I} respectively. Let $u_T$, $u_T'$, $u_O$, $u_I$, $u_I'$ be the following points in $S^2$.
\begin{align*}
u_T&=\frac{i+j+k}{\sqrt{3}}, u_T'=\frac{i+j-k}{\sqrt{3}}, u_O=\frac{i+j}{\sqrt{2}}, \\
u_I&=\frac{(\sqrt{5}+1)i+(\sqrt{5}-1)j}{2\sqrt{3}},\\
u_I'&=\frac{(\sqrt{10+2\sqrt{5}})i-(\sqrt{10-2\sqrt{5}})k}{2\sqrt{5}}.
\end{align*}
Then for each of the tessellations given by $\Gamma_C$, $\Gamma_D$, $\Gamma_T$, $\Gamma_O$, $\Gamma_I$, we can have a typical piece, which is denoted by $\triangle_C$, $\triangle_D$, $\triangle_T$, $\triangle_O$, $\triangle_I$ respectively.

\begin{figure}[h]
\centerline{\includegraphics{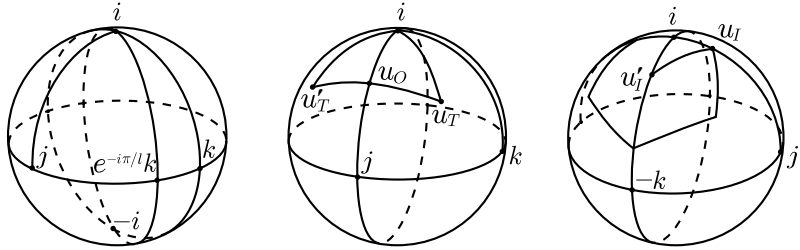}}
\caption{Typical pieces}\label{fig:TPiece}
\end{figure}

In Figure \ref{fig:TPiece}, we marked the vertices of the typical pieces. The concrete description of $\triangle_C$, $\triangle_D$, $\triangle_T$, $\triangle_O$, $\triangle_I$ is as below.

$\triangle_C$ : a geodesic bi-gon with vertices $i$, $-i$, and edges passing $k$, $e^{-i\pi/l}k$.

$\triangle_D$ : a geodesic triangle with angles $\pi/l$, $\pi/2$, $\pi/2$ at vertices $i$, $k$, $e^{-i\pi/l}k$.

$\triangle_T$ : a geodesic triangle with angles $\pi/2$, $\pi/3$, $\pi/3$ at vertices $i$, $u_T$, $u_T'$.

$\triangle_O$ : a geodesic triangle with angles $\pi/4$, $\pi/3$, $\pi/2$ at vertices $i$, $u_T$, $u_O$.

$\triangle_I$ : a geodesic triangle with angles $\pi/2$, $\pi/3$, $\pi/5$ at vertices $i$, $u_I$, $u_I'$.

Note that the areas of the pieces are $2\pi/l$, $\pi/l$, $\pi/6$, $\pi/12$, $\pi/30$ respectively.

\subsection{The skeletons of minimal surfaces}\label{ssec:skel}

Since each of the graphs $\Gamma_C$, $\Gamma_D$, $\Gamma_T$, $\Gamma_O$, $\Gamma_I$ passes the point $i \in S^2$, by Lemma \ref{Lem:lift}, we can lift them piecewise from the point $2 \in S^3_2$. With properly choosing of lifts, we will get graphs in $S^3_2$, which are unions of great circles and will be the skeletons of minimal surfaces. For each lifted graph, we will give a subgroup of $SO(4)$ preserving it and the enlarged Hopf fibration of $S^3_2$. These groups are generated by $\pi$-rotations around the great circles in the graphs, and can be thought as the lifts of the reflection groups of $S^2$.

\subsubsection{The lifts of $\Gamma_C$ and $\Gamma_D$}

(The skeletons of Lawson's minimal surfaces.)

The lifts of $\Gamma_C$ : Let $m$ and $n$ be two positive integers, and let $l$ be the lowest common multiple of $m$ and $n$. Firstly, we lift geodesics in $\Gamma_C$ passing the points $i$ and $e^{i\pi t/m}k$, $t=0, 1, \cdots, m-1$. In $S^3_2$, we get $m$ great circles passing $2$, which intersect the fibre $\mathcal{P}^{-1}(-i)$. Let $\widetilde{\alpha}$ be such a great circle in $S^3_2$ intersecting $\mathcal{P}^{-1}(-i)$ at a point $r'$, and let $\alpha=\mathcal{P}(\widetilde{\alpha})$, which passes some point $e^{i\pi t_0/m}k$. Then by Lemma \ref{Lem:lift}, from the point $r' \in S^3$ we can lift the geodesics in $\Gamma_C$ passing the points $-i$ and $e^{i\pi t'/n}e^{i\pi t_0/m}k$, $t'=0, 1, \cdots, n-1$.

This procedure can be done successively. When a lift $\widetilde{\alpha}$ meets the fibre $\mathcal{P}^{-1}(i)$ at some point $r$, then from $r$ we lift geodesics in $\Gamma_C$ having angles $\pi t/m$ with $\alpha$; when a lift $\widetilde{\alpha}$ meets the fibre $\mathcal{P}^{-1}(-i)$ at some point $r'$, then from $r'$ we lift geodesics in $\Gamma_C$ having angles $\pi t'/n$ with $\alpha$. Finally, we can get a graph in $S^3_2$ which is the union of great circles. Denote this graph by $\widetilde{\Gamma}_C(m,n)$.

The lifts of $\Gamma_D$ : There are two kinds of lifts, $\widetilde{\Gamma}_D(l)$ and $\widetilde{\Gamma}_D(l,\pi/2)$.

$\widetilde{\Gamma}_D(l)$: Lift the geodesic in $\Gamma_D$ passing the points $i$ and $k$. In $S^3_2$, we get a great circle passing $2$, which intersects the fibres $\mathcal{P}^{-1}(\pm k)$. Then from the intersection points, we lift all geodesics in $\Gamma_D$ passing $\pm k$. Generally, when a lift $\widetilde{\alpha}$ meets the fibre $\mathcal{P}^{-1}(e^{i\pi t/l}k)$, $t=0,1,\cdots,2l-1$, at some point $r$, then from $r$ we lift all geodesics in $\Gamma_D$ passing $e^{i\pi t/l}k$. Finally, we can get the graph in $S^3_2$.

$\widetilde{\Gamma}_D(l,\pi/2)$: Let $V_D(\pi/2)$ be the orbit of the vertex $e^{-i\pi/l}k$, which has an angle $\pi/2$ in the typical piece $\triangle_D$, under the action of {\bf D}. Lift all geodesics in $\Gamma_D$ passing the point $i$. In $S^3_2$, we get great circles passing $2$. Then lift $\Gamma_D$ successively. When a lift $\widetilde{\alpha}$ meets the fibre $\mathcal{P}^{-1}(u)$ at some point $r$, and $u$ is a vertex of $\Gamma_D$ not in $V_D(\pi/2)$, then from $r$ lift all geodesics in $\Gamma_D$ passing $u$. Finally, we can get the graph in $S^3_2$. Note that $\widetilde{\Gamma}_D(l,\pi/2)$ contains $\widetilde{\Gamma}_C(l,l)$ as a subgraph.

\subsubsection{The lifts of $\Gamma_T$, $\Gamma_O$, $\Gamma_I$}

(The skeletons of special minimal surfaces.)

There are two lifts of $\Gamma_T$, $\widetilde{\Gamma}_T(\pi/2)$ and $\widetilde{\Gamma}_T(\pi/3)$; three lifts of $\Gamma_O$, $\widetilde{\Gamma}_O(\pi/2)$, $\widetilde{\Gamma}_O(\pi/3)$ and $\widetilde{\Gamma}_O(\pi/4)$; and three lifts of $\Gamma_I$, $\widetilde{\Gamma}_I(\pi/2)$, $\widetilde{\Gamma}_I(\pi/3)$ and $\widetilde{\Gamma}_I(\pi/5)$.

$\widetilde{\Gamma}_T(\pi/2)$: Let $V_T(\pi/2)$ be the orbit of the vertex $i$ under the action of {\bf T}. Lift the geodesic in $\Gamma_T$ passing the points $i$ and $u_T$. In $S^3_2$, we get a great circle passing $2$. Then lift $\Gamma_T$ successively. When a lift $\widetilde{\alpha}$ meets the fibre $\mathcal{P}^{-1}(u)$ at some point $r$, and $u$ is a vertex of $\Gamma_T$ not in $V_T(\pi/2)$, then from $r$ lift all geodesics in $\Gamma_T$ passing $u$. Finally, we can get the graph in $S^3_2$.

The lifts $\widetilde{\Gamma}_O(\pi/4)$, $\widetilde{\Gamma}_I(\pi/2)$ can be obtained similarly. For $\widetilde{\Gamma}_O(\pi/4)$, the symbols $V_T(\pi/2)$, {\bf T}, $\Gamma_T$, $u_T$ should be replaced by $V_O(\pi/4)$, {\bf O}, $\Gamma_O$, $u_O$; and for $\widetilde{\Gamma}_I(\pi/2)$, the symbols $V_T(\pi/2)$, {\bf T}, $\Gamma_T$, $u_T$ should be replaced by $V_I(\pi/2)$, {\bf I}, $\Gamma_I$, $u_I$.

$\widetilde{\Gamma}_T(\pi/3)$: Let $V_T(\pi/3)$ be the orbit of the vertex $u_T'$ under the action of {\bf T}. Lift all geodesics in $\Gamma_T$ passing the point $i$. In $S^3_2$, we get great circles passing $2$. Then lift $\Gamma_T$ successively. When a lift $\widetilde{\alpha}$ meets the fibre $\mathcal{P}^{-1}(u)$ at some point $r$, and $u$ is a vertex of $\Gamma_T$ not in $V_T(\pi/3)$, then from $r$ lift all geodesics in $\Gamma_T$ passing $u$. Finally, we can get the graph in $S^3_2$.

The remaining lifts can be obtained similarly. For $\widetilde{\Gamma}_O(\pi/2)$, the symbols $V_T(\pi/3)$, $u_T'$, {\bf T}, $\Gamma_T$ should be replaced by $V_O(\pi/2)$, $u_O$, {\bf O}, $\Gamma_O$; for $\widetilde{\Gamma}_O(\pi/3)$, the symbols $V_T(\pi/3)$, $u_T'$, {\bf T}, $\Gamma_T$ should be replaced by $V_O(\pi/3)$, $u_T$, {\bf O}, $\Gamma_O$; for $\widetilde{\Gamma}_I(\pi/3)$, the symbols $V_T(\pi/3)$, $u_T'$, {\bf T}, $\Gamma_T$ should be replaced by $V_I(\pi/3)$, $u_I$, {\bf I}, $\Gamma_I$; and for $\widetilde{\Gamma}_I(\pi/5)$, the symbols $V_T(\pi/3)$, $u_T'$, {\bf T}, $\Gamma_T$ should be replaced by $V_I(\pi/5)$, $u_I'$, {\bf I}, $\Gamma_I$.

\subsubsection{The lifts of reflection groups of $S^2$}\label{sssec:LofG}

For the three families of lifted graphs from $\Gamma_C$, $\Gamma_D$ and eight lifted graphs from $\Gamma_T$, $\Gamma_O$, $\Gamma_I$ we define their corresponding groups as below. The generators and orders of the groups are given.
\begin{align*}
G_C(m,n) &: [e^{i\pi/m},\cos\frac{\pi}{m}+i\sin\frac{\pi}{m}], [e^{i\pi/n},\cos\frac{\pi}{n}-i\sin\frac{\pi}{n}], [j,j]; &&2mn\\
G_D(l) &: [e^{i\pi/2},\cos\frac{\pi}{2}+k\sin\frac{\pi}{2}], [e^{i\pi/2},\cos\frac{\pi}{2}+e^{-i\pi/l}k\sin\frac{\pi}{2}], [j,j]; &&8l\\
G_D(l,\frac{\pi}{2}) &: [e^{i\pi/l},\cos\frac{\pi}{l}+i\sin\frac{\pi}{l}], [e^{i\pi/2},\cos\frac{\pi}{2}+k\sin\frac{\pi}{2}], [j,j]; &&4l^2\\
G_T(\frac{\pi}{2}) &: [e^{i\pi/3},\cos\frac{\pi}{3}+u_T\sin\frac{\pi}{3}], [e^{i\pi/3},\cos\frac{\pi}{3}+u_T'\sin\frac{\pi}{3}], [e^{i\pi/4}k,e^{i\pi/4}k]; &&144\\
G_T(\frac{\pi}{3}) &: [e^{i\pi/2},\cos\frac{\pi}{2}+i\sin\frac{\pi}{2}], [e^{i\pi/3},\cos\frac{\pi}{3}+u_T\sin\frac{\pi}{3}], [e^{i\pi/4}k,e^{i\pi/4}k]; &&96\\
G_O(\frac{\pi}{2}) &: [e^{i\pi/3},\cos\frac{\pi}{3}+u_T\sin\frac{\pi}{3}], [e^{i\pi/4},\cos\frac{\pi}{4}+i\sin\frac{\pi}{4}], [k,k]; &&576\\
G_O(\frac{\pi}{3}) &: [e^{i\pi/2},\cos\frac{\pi}{2}+u_O\sin\frac{\pi}{2}], [e^{i\pi/4},\cos\frac{\pi}{4}+i\sin\frac{\pi}{4}], [k,k]; &&384\\
G_O(\frac{\pi}{4}) &: [e^{i\pi/2},\cos\frac{\pi}{2}+u_O\sin\frac{\pi}{2}], [e^{i\pi/3},\cos\frac{\pi}{3}+u_T\sin\frac{\pi}{3}], [k,k]; &&288\\
G_I(\frac{\pi}{2}) &: [e^{i\pi/3},\cos\frac{\pi}{3}+u_I\sin\frac{\pi}{3}], [e^{i\pi/5},\cos\frac{\pi}{5}+u_I'\sin\frac{\pi}{5}], [k,k]; &&3600\\
G_I(\frac{\pi}{3}) &: [e^{i\pi/2},\cos\frac{\pi}{2}+i\sin\frac{\pi}{2}], [e^{i\pi/5},\cos\frac{\pi}{5}+u_I'\sin\frac{\pi}{5}], [k,k]; &&2400\\
G_I(\frac{\pi}{5}) &: [e^{i\pi/2},\cos\frac{\pi}{2}+i\sin\frac{\pi}{2}], [e^{i\pi/3},\cos\frac{\pi}{3}+u_I\sin\frac{\pi}{3}], [k,k]; &&1440
\end{align*}

By Lemma \ref{Lem:trans}, these groups preserve the enlarged Hopf fibration of $S^3_2$, and induce the corresponding reflection groups of $S^2$. For each group in the above list, the elements inducing the identity on $S^2$ correspond to movements along all fibres  $\{e^{i\theta/2}r \mid \theta\in \mathbb{R}\}$ in $S^3_2$. They form a cyclic group. The orders in the above list can be obtained by computing the orders of the corresponding cyclic groups.

\begin{lemma}\label{Lem:pirot}
Let {\bf R} be a reflection group of $S^2$. It has the corresponding graph $\Gamma_R$. Let $\widetilde{\Gamma}_R$ be a lifted graph, which is the union of great circles $\{c_\lambda\}_{\lambda\in\Lambda}$, where $\Lambda$ is some index set. Let $G_R$ be the corresponding group of $\widetilde{\Gamma}_R$, and let $\widetilde{{\bf R}}$ be the subgroup of $SO(4)$ generated by $\pi$-rotations around $c_\lambda$, $\lambda\in\Lambda$. Then $G_R=\widetilde{{\bf R}}$.
\end{lemma}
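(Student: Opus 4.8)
The plan is to prove the two inclusions $G_R \subseteq \widetilde{{\bf R}}$ and $\widetilde{{\bf R}} \subseteq G_R$ separately. For the inclusion $\widetilde{{\bf R}} \subseteq G_R$, I would argue that each $\pi$-rotation $\rho_\lambda$ around a great circle $c_\lambda$ of $\widetilde{\Gamma}_R$ lies in $G_R$. By Lemma~\ref{Lem:geodesic}, a $\pi$-rotation around a geodesic of $S^3_2$ that meets the fibres orthogonally has the form $[e^{i\tau}j, e^{i\tau}j]$, which by Lemma~\ref{Lem:trans}(c) preserves the enlarged Hopf fibration and induces on $S^2$ the reflection $-[e^{i\tau}j]$ about the plane orthogonal to $e^{i\tau}j$. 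Since $c_\lambda$ is a lift of a geodesic in $\Gamma_R$, the induced reflection on $S^2$ is a reflection of the finite reflection group ${\bf R}$; composing $\rho_\lambda$ with an appropriate generator of $G_R$ inducing the same reflection on $S^2$, the difference is an element inducing the identity on $S^2$, hence a movement along all fibres by Lemma~\ref{Lem:trans}(a), i.e. of the form $[e^{i\sigma/2},1]$. So it suffices to check that $\rho_\lambda$ itself is such a fibre translation times a generator; this is a direct computation comparing how $\rho_\lambda$ moves a chosen fibre point against the known generators in the list. Because the generators of $G_R$ were chosen to be exactly (conjugates of) such $\pi$-rotations about the "first" lifted circles through $2 \in S^3_2$, and $G_R$ acts transitively enough on $\widetilde{\Gamma}_R$ to move any $c_\lambda$ to one of these, we get $\rho_\lambda \in G_R$.

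For the reverse inclusion $G_R \subseteq \widetilde{{\bf R}}$, I would observe that each listed generator of $G_R$ is visibly a $\pi$-rotation around a great circle, or a product of two such, and that these great circles are among the $c_\lambda$. Concretely, a generator of the form $[e^{i\pi/l}, \cos(\pi/l) + u\sin(\pi/l)]$ should be rewritten, using $[p,q]=[1,q][p,1]$ and Lemmas~\ref{Lem:rotate} and~\ref{Lem:geodesic}, as a composition of the fibre translation $[e^{i\pi/l},1]$ and the rotation $[1,\cos(\pi/l)+u\sin(\pi/l)]$, and then recognized as a product of two $\pi$-rotations about lifted geodesics meeting at angle $\pi/l$ (this is the standard fact that in the reflection group ${\bf R}$ a rotation by $2\pi/l$ about a vertex is the product of two reflections through edges meeting at that vertex, and the statement lifts fibrewise). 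The generator $[j,j]$ (or $[k,k]$, $[e^{i\pi/4}k, e^{i\pi/4}k]$) is by Lemma~\ref{Lem:geodesic} literally a $\pi$-rotation about a lifted geodesic, hence in $\widetilde{{\bf R}}$. Thus every generator of $G_R$ lies in $\widetilde{{\bf R}}$, giving the inclusion.

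The main obstacle I anticipate is bookkeeping rather than conceptual: one must verify that the great circles appearing when the listed generators are decomposed into $\pi$-rotations are genuinely edges of $\widetilde{\Gamma}_R$ (i.e. that the decomposition respects the particular choice of lifts made in Section~\ref{ssec:skel}, including the subtlety that the lifting procedure only lifts geodesics through vertices \emph{not} in the excluded orbit $V_R(\pi/k)$), and conversely that $G_R$ acts transitively on the set of great circles $\{c_\lambda\}$ so that checking $\rho_\lambda \in G_R$ for one representative per orbit suffices. Handling the case $\widetilde{\Gamma}_C(m,n)$ with $m\neq n$ requires a little extra care since the two "angle" generators have different orders $m$ and $n$ and one must track which fibres ($\mathcal{P}^{-1}(i)$ versus $\mathcal{P}^{-1}(-i)$) the circles pass through. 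Once the orbit structure of $\widetilde{\Gamma}_R$ under $G_R$ is pinned down, both inclusions follow from Lemmas~\ref{Lem:rotate},~\ref{Lem:geodesic} and~\ref{Lem:trans} by the computations sketched above.
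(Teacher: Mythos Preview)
Your proposal is correct and follows essentially the same two-inclusion strategy as the paper: the paper also shows $G_R\subseteq\widetilde{{\bf R}}$ by noting that the third generator is literally a $\pi$-rotation about a circle through $2$ and that the first two generators are products of two such $\pi$-rotations, and shows $\widetilde{{\bf R}}\subseteq G_R$ by observing that at each vertex $r$ of $\widetilde{\Gamma}_R$ the combination of one $\pi$-rotation about an edge through $r$ together with the fibre rotation about $\mathcal{P}^{-1}(\mathcal{P}(r))$ (which is one of the first two generators or a $G_R$-conjugate thereof) yields all the $\pi$-rotations about edges through $r$, then propagates through the graph. Your formulation via transitivity of $G_R$ on the $c_\lambda$ is just a repackaging of this local propagation argument, and the bookkeeping obstacles you anticipate are exactly the ones the paper handles implicitly by appealing to ``the constructions of the lifts.''
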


\begin{proof}
In $S^2$, if two points $u$ and $u'$ are in the same orbit under the action of {\bf R}, then rotations around $\mathcal{P}^{-1}(u)$ and $\mathcal{P}^{-1}(u')$ are conjugate by some element of $G_R$.

Let $r$ be a vertex of $\widetilde{\Gamma}_R$, and $u=\mathcal{P}(r)$. Suppose $\{c_1,\cdots,c_t\}$ are the great circles passing $r$. They are orthogonal to the fibre $\mathcal{P}^{-1}(u)$. If $G_R$ contains the $\pi$-rotation around $c_1$ and the $2\pi/t$-rotation around $\mathcal{P}^{-1}(u)$, then $G_R$ contains the $\pi$-rotations around $c_\lambda$, $\lambda=1,\cdots,t$.

By Lemma \ref{Lem:trans}, the last generator of $G_R$ is a $\pi$-rotation around some $c_\lambda$ passing the point $2 \in S^3_2$. Other generators are rotations around the fibres $\mathcal{P}^{-1}(u)$, where $u$ is one of $i$, $-i$, $k$, $e^{-i\pi/l}k$, $u_T$, $u_T'$, $u_O$, $u_I$, $u_I'$.

Hence by the constructions of the lifts, $G_R$ must contain all of the $\pi$-rotations around $c_\lambda$, $\lambda\in\Lambda$. Then $\widetilde{{\bf R}}\subseteq G_R$. On the other hand, the three generators of $G_R$ are compositions of elements in $\widetilde{{\bf R}}$. Hence $G_R\subseteq \widetilde{{\bf R}}$, and we have $G_R=\widetilde{{\bf R}}$.
\end{proof}

Note that by the constructions of the lifts, the lifted graphs must be preserved by their corresponding groups (or the $\pi$-rotations around the great circles in them).

\subsection{The fundamental quadrilaterals}\label{ssec:fund}

For the lifted graphs in $S^3_2$ we define their corresponding fundamental quadrilaterals as below. We choose certain closed piecewise geodesics in $\Gamma_C$, $\Gamma_D$, $\Gamma_T$, $\Gamma_O$, $\Gamma_I$, which pass the point $i$. Then we lift them from the point $2 \in S^3_2$. By Lemma \ref{Lem:lift}, the lifts will be simple closed piecewise geodesics in the lifted graphs, which give us the fundamental quadrilaterals.

For $\widetilde{\Gamma}_C(m,n)$ the closed piecewise geodesic starts at $i$ and passes $k$, $-i$, $e^{-i\pi/n}k$, $i$, $e^{-i\pi/m}e^{-i\pi/n}k$, $-i$, $e^{-i\pi/m}k$, $i$ successively; for $\widetilde{\Gamma}_D(l)$ the closed piecewise geodesic starts at $i$ and passes $k$, $e^{-i\pi s/l}k$, $i$, $-e^{i\pi(s-1)/l}k$, $-k$, $i$ successively, where $s\in [0,1]$; for $\widetilde{\Gamma}_D(l,\pi/2)$ the closed piecewise geodesic starts at $i$ and passes $k$, $e^{-2i\pi s/l}k$, $-i$, $e^{-i\pi/l}k$, $i$ successively, where $s\in [0,1]$. In Figure \ref{fig:familypath}, we marked the routes of the piecewise geodesics in $\Gamma_C$ and $\Gamma_D$.

\begin{figure}[h]
\centerline{\includegraphics{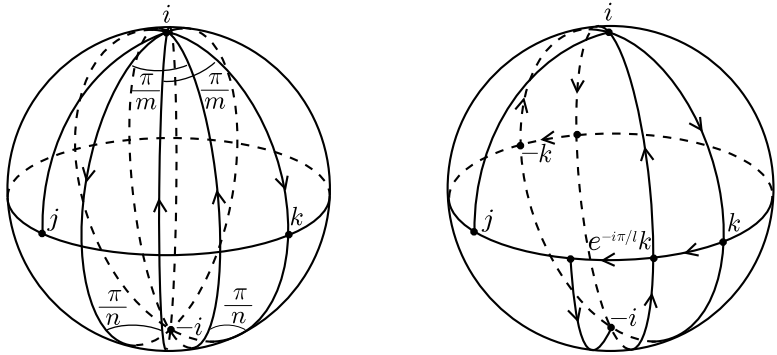}}
\caption{Piecewise geodesics in $\Gamma_C$ and $\Gamma_D$}\label{fig:familypath}
\end{figure}

For each of the other lifted graphs, to construct it we have chosen a vertex in the typical piece and defined the orbit of the vertex. Around the vertex there exists another piece having the opposite angles with the typical piece. Then the closed piecewise geodesic consists of the boundaries of the two pieces. By the construction of the lifted graphs, the route of the closed piecewise geodesic is determined, up to the choosing of the direction. In Figure \ref{fig:specialpath}, we marked the routes of the piecewise geodesics in $\Gamma_O$. At the self-intersections of the routes one should go straight.

\begin{figure}[h]
\centerline{\includegraphics{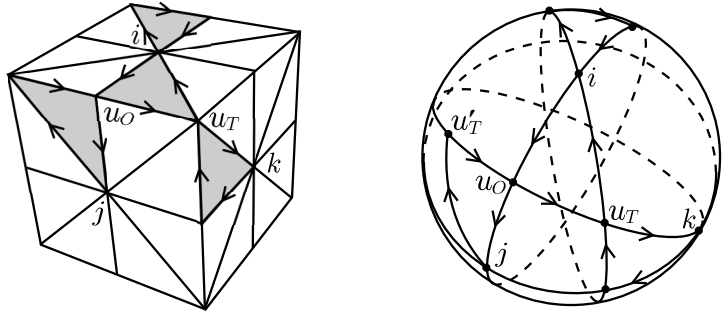}}
\caption{Piecewise geodesics in $\Gamma_O$}\label{fig:specialpath}
\end{figure}

By Lemma \ref{Lem:lift}, edge lengths and angles of the fundamental quadrilaterals can be computed from the closed piecewise geodesics. Let $KLMN$ denote a fundamental quadrilateral with edges $KL$, $LM$, $MN$, $NK$ and angles $\angle K$, $\angle L$, $\angle M$, $\angle N$. The lengths and angles of the fundamental quadrilaterals are given below.
\begin{align*}
&KLMN && KL && LM && MN && NK && \angle K && \angle L && \angle M && \angle N\\
&Q_C(m,n) && \pi && \pi && \pi && \pi && \frac{\pi}{m} && \frac{\pi}{n} && \frac{\pi}{m} && \frac{\pi}{n}\\
&Q_D(l) && \frac{\pi}{l} && \pi && \frac{\pi}{l} && \pi && \frac{\pi}{2} && \frac{\pi}{2} && \frac{\pi}{2} && \frac{\pi}{2}\\
&Q_D(l,\frac{\pi}{2}) && \frac{\pi}{2} && \frac{2\pi}{l} && \frac{\pi}{2} && \pi && \frac{\pi}{l} && \frac{\pi}{2} && \frac{\pi}{2} && \frac{\pi}{l}\\
&Q_T(\frac{\pi}{2}) && \psi(\frac{1}{3}) && \psi(-\frac{1}{3}) && \psi(\frac{1}{3}) && \psi(-\frac{1}{3}) && \frac{\pi}{3} && \frac{\pi}{3} && \frac{\pi}{3} && \frac{\pi}{3}\\
&Q_T(\frac{\pi}{3}) && \psi(\frac{1}{\sqrt{3}}) && \psi(\frac{-1}{\sqrt{3}}) && \psi(\frac{1}{\sqrt{3}}) && \psi(\frac{-1}{\sqrt{3}}) && \frac{\pi}{2} && \frac{\pi}{3} && \frac{\pi}{2} && \frac{\pi}{3}\\
&Q_O(\frac{\pi}{2}) && \frac{\pi}{2} && \psi(\frac{1}{\sqrt{3}}) && \psi(\frac{1}{3}) && \psi(\frac{1}{\sqrt{3}}) && \frac{\pi}{4} && \frac{\pi}{4} && \frac{\pi}{3} && \frac{\pi}{3}\\
&Q_O(\frac{\pi}{3}) && \frac{\pi}{4} && \frac{\pi}{2} && \frac{\pi}{4} && \frac{\pi}{2} && \frac{\pi}{4} && \frac{\pi}{2} && \frac{\pi}{4} && \frac{\pi}{2}\\
&Q_O(\frac{\pi}{4}) && \psi(\frac{2}{\sqrt{6}}) && \psi(-\frac{1}{3}) && \psi(\frac{2}{\sqrt{6}}) && \frac{\pi}{2} && \frac{\pi}{2} && \frac{\pi}{3} && \frac{\pi}{3} && \frac{\pi}{2}\\
&Q_I(\frac{\pi}{2}) && \psi(\mu\nu) && \psi(\frac{1}{\sqrt{5}}) && \psi(\mu\nu) && \psi(\frac{\sqrt{5}}{3}) && \frac{\pi}{3} && \frac{\pi}{5} && \frac{\pi}{5} && \frac{\pi}{3}\\
&Q_I(\frac{\pi}{3}) && \psi(\kappa) && \psi(\nu) && \psi(\kappa) && \psi(\nu) && \frac{\pi}{2} && \frac{\pi}{5} && \frac{\pi}{2} && \frac{\pi}{5}\\
&Q_I(\frac{\pi}{5}) && \psi(\mu) && \psi(\sigma) && \psi(\mu) && \psi(\sigma) && \frac{\pi}{2} && \frac{\pi}{3} && \frac{\pi}{2} && \frac{\pi}{3}
\end{align*}
Here $\psi(t)=\arccos(t) \in [0,\pi]$, and $\mu$, $\nu$, $\kappa$, $\sigma$ are the following constants.
$$\mu=\frac{\sqrt{5}+1}{2\sqrt{3}}, \nu=\frac{\sqrt{10+2\sqrt{5}}}{2\sqrt{5}}, \kappa=\sqrt{1-\nu^2}, \sigma=\sqrt{1-\mu^2}.$$

\begin{lemma}\label{Lem:4gon}
Let {\bf R}, $\Gamma_R$, $\widetilde{\Gamma}_R$, $G_R$ be as in Lemma \ref{Lem:pirot}. Let $|G_R|$ denote the order of $G_R$. Let $Q_R$ be the corresponding fundamental quadrilateral of $\widetilde{\Gamma}_R$. It has two angles of $\pi/m$ and two angles of $\pi/n$, where $m$ and $n$ are positive integers. Let $\mathcal{H}$ be the set of closed hemispheres in $S^3_2$, and let
$$\mathcal{C}(Q_R)=\bigcap\{H\in \mathcal{H}\mid Q_R\subset H\}$$
be the convex hull of $Q_R$. Then

(a) $\widetilde{\Gamma}_R$ is the union of the orbit of $Q_R$ under the action of $G_R$. If $mn\neq 1$, then the orbit of $Q_R$ contains $|G_R|$ elements. Otherwise, $\widetilde{\Gamma}_R=Q_R$.

(b) $G_R$ is generated by $\pi$-rotations around the edges of $Q_R$.

(c) If $m, n\geq 2$, then $\mathcal{C}(Q_R)$ is a geodesic tetrahedron in an open hemisphere of $S^3_2$; if $mn=1$, then $\mathcal{C}(Q_R)$ is a great circle; otherwise, $\mathcal{C}(Q_R)$ is a geodesic bi-gon.

(d) When $mn\neq 1$, elements in the orbit of $\mathcal{C}(Q_R)$ only meet at their boundaries, and there exists an embedded disk $D$ in $\mathcal{C}(Q_R)$ such that $D\cap\partial \mathcal{C}(Q_R)=\partial D=Q_R$. The union of the orbit of $D$ form an embedded closed surface in $S^3_2$ with genus
$$g=1+\frac{1}{2}(1-\frac{1}{m}-\frac{1}{n})|G_R|.$$
\end{lemma}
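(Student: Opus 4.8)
The plan is to handle the four parts essentially in the order (b), (a), (c), (d), since the group-theoretic facts feed the geometric ones. First, for part (b): by Lemma \ref{Lem:trans} each generator of $G_R$ listed in Section \ref{sssec:LofG} decomposes as a composition of two $\pi$-rotations, one around a great circle through $2\in S^3_2$ and one around a fibre $\mathcal{P}^{-1}(u)$. By construction the edges of the closed piecewise geodesic defining $Q_R$ (Section \ref{ssec:fund}) lie in $\widetilde{\Gamma}_R$, lift geodesic arcs of $\Gamma_R$ through $i$ and one other distinguished vertex, and — by Lemma \ref{Lem:geodesic} and the description of the piecewise geodesics (the ``figure 8'') — the $\pi$-rotations around these four edges realize exactly the three listed generators (one edge gives the ``$[q,q]$''-type generator through $2$, consecutive pairs of edges composing to the fibre-rotation generators). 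So the subgroup generated by $\pi$-rotations around the edges of $Q_R$ contains all three generators, hence equals $G_R$; conversely each such $\pi$-rotation preserves $\widetilde{\Gamma}_R$ (the final remark of Section \ref{sssec:LofG}) and is a product of generators, so it lies in $G_R$. Combined with Lemma \ref{Lem:pirot} ($G_R=\widetilde{\mathbf R}$) this is exactly (b).

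For part (a): $\widetilde{\Gamma}_R$ is built as the union of lifts of all geodesic arcs of $\Gamma_R$; since $\Gamma_R$ is the orbit under $\mathbf R$ of the edges of a typical piece $\triangle_R$, and $G_R$ covers $\mathbf R$ and permutes the lifted arcs transitively in the appropriate way, $\widetilde{\Gamma}_R=\bigcup_{g\in G_R} Q_R^g$. The count $|G_R|$ when $mn\neq 1$: the stabilizer of $Q_R$ in $G_R$ is trivial, because a nontrivial element fixing the quadrilateral setwise would have to fix its four vertices (the angles $\pi/m,\pi/n$ are generically distinct, and even when $m=n$ the figure-8 route breaks the remaining symmetry), forcing it to fix a geodesic tetrahedron or bigon pointwise, hence be the identity — here one invokes (c). When $mn=1$ (the Clifford-torus case $Q_C(1,1)$) all four angles are $\pi$, the quadrilateral degenerates, and $\widetilde{\Gamma}_C(1,1)=Q_C(1,1)$ is a single great circle.

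For part (c): this is a direct spherical-trigonometry check from the edge-length/angle table. When $m,n\geq 2$ all four angles are at most $\pi/2$ and the total edge length is less than $2\pi$, so $Q_R$ is a genuine geodesic quadrilateral; one shows its convex hull is a nondegenerate geodesic tetrahedron contained in an open hemisphere by exhibiting the four vertices as the quaternions obtained by lifting (Lemma \ref{Lem:lift}(d) gives the fibre-shift by the relevant area, e.g. $2\pi/l$, $\pi/6$, $\pi/12$, $\pi/30$) and checking their Gram matrix is positive definite with the right signs. When $mn=1$ the hull collapses to a great circle; when exactly one of $m,n$ is $1$ (the $Q_C(m,1)$ and $Q_D(1,\pi/2)$-type cases) two opposite vertices coincide and the hull is a geodesic bigon. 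The main obstacle is part (d): one must (i) show the $G_R$-translates of the tetrahedron $\mathcal{C}(Q_R)$ tile (meet only along faces) — this follows because the $\pi$-rotations around the edges of $Q_R$ generate $G_R$ and act as a reflection-type group on the tessellation, so the translates are the cells of a $G_R$-invariant decomposition of $S^3_2$; (ii) produce the spanning disk $D\subset\mathcal{C}(Q_R)$ with $\partial D=Q_R$ and $D\cap\partial\mathcal{C}(Q_R)=Q_R$ — take, e.g., a small pushed-in cone on $Q_R$, or the totally-geodesic-ish disk spanning the quadrilateral inside the tetrahedron; and (iii) run the Euler-characteristic count on $\Sigma=\bigcup_{g\in G_R}D^g$. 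For (iii): $\Sigma$ has $|G_R|$ faces, $2|G_R|$ edges (each edge of a quadrilateral is shared by two, since by (b) the $\pi$-rotation about an edge glues the two adjacent disks), and the vertices come in two types — those over a $\pi/m$-vertex, where $2m$ quadrilateral-corners meet, and those over a $\pi/n$-vertex, where $2n$ meet — giving $V=\tfrac{|G_R|}{2m}\cdot 2+\tfrac{|G_R|}{2n}\cdot 2 = |G_R|(\tfrac1m+\tfrac1n)$ after tracking multiplicities, so $\chi(\Sigma)=|G_R|(\tfrac1m+\tfrac1n)-2|G_R|+|G_R|=|G_R|(\tfrac1m+\tfrac1n-1)$ and $g=1-\tfrac12\chi=1+\tfrac12(1-\tfrac1m-\tfrac1n)|G_R|$. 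Embeddedness of $\Sigma$ is inherited from the tiling in (i) together with the embeddedness of each $D$ in its own cell. I expect (d)(i) — verifying the translates genuinely tile rather than overlap — to be the technically delicate point, handled via Lemma \ref{Lem:pirot} and the standard fact that a group generated by reflections in the faces of a convex cell acts properly with that cell as fundamental domain.
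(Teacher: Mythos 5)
Your overall skeleton --- (b) first, then (a), (c), and the Euler count in (d) --- matches the paper's, and parts (b), (c) and the $V-E+F$ computation are fine. But two of your key justifications would fail as written.

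First, in (a) you argue that the stabilizer of $Q_R$ in $G_R$ is trivial ``because a nontrivial element fixing the quadrilateral setwise would have to fix its four vertices.'' That premise is false: the fundamental quadrilaterals do admit nontrivial setwise symmetries permuting their vertices --- the $\pi$-rotation around the common perpendicular $o_1o_2$ of the diagonals, or around the fibre over the self-intersection of the figure-8 --- and these are precisely the extra symmetries the paper exploits in Section \ref{Sec:symmetry} to double $|G_R|$ to $|Isom(M)|$. Your angle/figure-8 parenthetical does not distinguish vertices in cases such as $Q_T(\pi/2)$ (all angles $\pi/3$) or $Q_O(\pi/2)$ (angles $\pi/4,\pi/4,\pi/3,\pi/3$), and more importantly the real issue is not whether $Q_R$ has symmetries but whether any of them lie in $G_R$. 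The paper obtains the count the other way around: the orbit of $Q_R$ is the lift (via Lemma \ref{Lem:lift}) of the orbit of $\mathcal{P}(Q_R)$ under $\mathbf{R}$, the stabilizer of each fibre $\mathcal{P}^{-1}(u)$ in $G_R$ acts on that fibre as a dihedral group generated by reflections in the points of $\widetilde{\Gamma}_R\cap\mathcal{P}^{-1}(u)$, and counting those intersection points yields $|G_R|$ quadrilaterals directly; triviality of the stabilizer is then a corollary of orbit--stabilizer, not an input.

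Second, in (d)(i) you invoke ``the standard fact that a group generated by reflections in the faces of a convex cell acts properly with that cell as fundamental domain.'' That fact does not apply here: $G_R$ is generated by $\pi$-rotations about the \emph{edges} of $Q_R$, which are codimension-two geodesics, not by reflections in the codimension-one faces of the tetrahedron $\mathcal{C}(Q_R)$, so $\mathcal{C}(Q_R)$ is not a Coxeter chamber for $G_R$ and the Poincar\'e-type argument gives you nothing. The paper resolves this, again, through the fibration: a $\pi$-rotation about $KL$ sends $KLMN$ to the lift from $K$ of the reflected spherical piece $\mathcal{P}(KLM'N')$, so the whole orbit of $Q_R$ (hence of $\mathcal{C}(Q_R)$) is assembled by lifting the $\mathbf{R}$-tessellation of $S^2$ fibre by fibre, and the dihedral action on each fibre shows the convex hulls meet only along their boundaries. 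If you want to keep your route, you would need to replace both of these steps with an argument that actually uses the fibration structure (or some substitute for it).
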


\begin{proof}
By Lemma \ref{Lem:lift} and the constructions of the lifted graphs, for each $u \in \Gamma_R$ the number of intersections in $\widetilde{\Gamma}_R\cap\mathcal{P}^{-1}(u)$ can be computed. By Lemma \ref{Lem:pirot} and the structure of $G_R$, the stable subgroup of $\mathcal{P}^{-1}(u)$ acts on $\mathcal{P}^{-1}(u)$ as a dihedral group action, which is generated by reflections about the intersections.

For a quadrilateral $KLMN$ in the orbit of $Q_R$, suppose that a $\pi$-rotation around $KL$ maps it to $KLM'N'$, then in $S^2$ the reflection about $\mathcal{P}(KL)$ maps $\mathcal{P}(KLMN)$ to $\mathcal{P}(KLM'N')$, and $KLM'N'$ can be obtained by lifting $\mathcal{P}(KLM'N')$ from $K$. Hence the orbit of $Q_R$ can be obtained by lifting the orbit of $\mathcal{P}(Q_R)$. Then we can see how the elements in the orbit of $Q_R$ fit together.

(a) By Lemma \ref{Lem:pirot}, $G_R$ is generated by the $\pi$-rotations around great circles in $\widetilde{\Gamma}_R$. Then via $\pi$-rotations around edges of quadrilaterals, every great circle lies in the union of the orbit of $Q_R$, and so does $\widetilde{\Gamma}_R$. Then by the description as above, the number of elements in the orbit of $Q_R$ can be computed by Lemma \ref{Lem:lift}, and we can see that when $mn\neq 1$, the stable subgroup of $Q_R$ in $G_R$ is trivial.

(b) Since the group generated by $\pi$-rotations around the edges of $Q_R$ contains the three generators of $G_R$. By Lemma \ref{Lem:pirot}, it is $G_R$.

(c) When $m,n\geq 2$, $Q_R$ is a quadrilateral in $S^3_2$ with edge lengths not bigger than $\pi$ and angles not bigger than $\pi/2$. Then the distance between any two vertices of $Q_R$ is not bigger than $\pi$. Hence $\mathcal{C}(Q_R)$ is a geodesic tetrahedron in an open hemisphere of $S^3_2$. The other two cases are trivial.

(d) By the description at the beginning of the proof, elements in the orbit of $\mathcal{C}(Q_R)$ only meet at their boundaries. The existence of $D$ is trivial. Since there are $|G_R|$ elements in the orbit of $Q_R$, the surface contains $2|G_R|/2m$ vertices with angle $\pi/m$, $2|G_R|/2n$ vertices with angle $\pi/n$, $4|G_R|/2$ edges and $|G_R|$ faces. By the Euler formula, we have
$$2-2g=\frac{1}{m}|G_R|+\frac{1}{n}|G_R|-2|G_R|+|G_R|.$$
Then we can get $g$, and the proof is finished.
\end{proof}

In Lemma \ref{Lem:4gon}, if $Q_R$ satisfies certain conditions, then we can choose $D$ to be minimal such that the union of its orbit is a closed minimal surface in $S^3_2$. We will discuss such conditions in the next section.

\section{The existence of minimal surfaces}\label{Sec:existence}

Let $KLMN$ be a fundamental quadrilateral corresponding to some lifted graph. Then it has two angles of $\pi/m$ and two angles of $\pi/n$, where $m$ and $n$ are positive integers. If $m=1$ or $n=1$, then the lifted graph is one of $Q_C(m,1)$, $Q_C(1,n)$ and $Q_D(1,\pi/2)$. By Lemma \ref{Lem:4gon} we can get a geodesic two dimensional sphere, which is minimal in $S^3_2$. At what follows, we assume that $m\geq 2$ and $n\geq 2$. Then by Lawson's existence theorem of minimal surfaces we will show that in $S^3_2$ there exists an embedded closed minimal surface corresponding to $KLMN$.

\subsection{Lawson's existence theorem}

Here we give a brief introduction to Lawson's results. For more details one should see the original paper \cite{L1} and \cite{L2}.

Let $\Gamma$ be a geodesic polygon in $S^3_2$ having vertices $v_0, v_1, \cdots, v_s=v_0$ and edges $\gamma_0, \gamma_1, \cdots, \gamma_s=\gamma_0$ such that for each $t$, $\gamma_t$ meets $\gamma_{t-1}$ in $v_t$ at an angle of the form $\pi/(l_t+1)$ where $l_t$ is a positive integer. If $\gamma$ and $\delta$ are distinct geodesics which meet in $S^3_2$, then let $S(\gamma,\delta)$ denote the unique geodesic two sphere containing $\gamma\cup\delta$, and for each $t$, let $N_t$ be the geodesic perpendicular to $S(\gamma_{t-1},\gamma_t)$ at $v_t$.

A subset of $S^3_2$ is bounded by $S(\gamma,\delta)$ if it is contained in one of the two closed hemispheres determined by $S(\gamma,\delta)$. The polygon $\Gamma$ is called proper if for each $t$, it is bounded either by $S(\gamma_{t-1},N_t)$ or by $S(\gamma_t,N_t)$. It is called convex if $\Gamma\subset\partial \mathcal{C}(\Gamma)$, where $\mathcal{C}(\Gamma)=\bigcap\{H\in \mathcal{H}\mid\Gamma \subset H\}$ and $\mathcal{H}$ is the set of closed hemispheres in $S^3_2$.

Finally, let $\mathcal{S}_{\Gamma}$ be the set of geodesic two spheres in $S^3_2$, such that $S \in \mathcal{S}_{\Gamma}$ if and only if $S\cap \Gamma$ has at least four components. Denote by $\mathcal{C}(\Gamma)^\circ$ the interior of $\mathcal{C}(\Gamma)$, and denote by $\triangle$ the closed unit disk in $\mathbb{C}$.

\begin{theorem}\label{Thm:exist}
For a proper convex geodesic polygon $\Gamma$ in $S^3_2$ having vertex angles of the type $\pi/(l+1)$, where $l$ is a positive integer depending on the vertex, let $G_{\Gamma}$ be the group generated by $\pi$-rotations around edges of $\Gamma$. If $\Gamma$ satisfies conditions (A--D) as below, then there exists an embedded minimal disk $\mathcal{M}_\Gamma$ in $\mathcal{C}(\Gamma)$ such that $\mathcal{M}_\Gamma\cap\partial \mathcal{C}(\Gamma)= \partial \mathcal{M}_\Gamma =\Gamma$. Moreover, let $M_{\Gamma}$ be the union of the orbit of $\mathcal{M}_\Gamma$ under the action of $G_\Gamma$, then $M_{\Gamma}$ is a complete non-singular minimal surface in $S^3_2$, and it is compact if and only if $G_\Gamma$ is finite.

(A) $\Gamma$ lies in an open hemisphere of $S^3_2$.

(B) For each $p\in \mathcal{C}(\Gamma)^\circ$ there is a geodesic two sphere $S_p \notin \mathcal{S}_{\Gamma}$ containing $p$.

(C) For each $t$, if one of $S(\gamma_{t-1},N_t)$ and $S(\gamma_t,N_t)$ fails to bound $\Gamma$, then $l_t=1$.

(D) There exists a continuous map $\rho: \mathcal{C}(\Gamma)\rightarrow \triangle$ which is differentiable in $\mathcal{C}(\Gamma)^\circ$ and carries $\Gamma$ monotonically onto $\partial\triangle$ such that for each $S\in \mathcal{S}_\Gamma$ the differential of the map $\rho\mid S\cap\mathcal{C}(\Gamma)^\circ$ is everywhere of rank $2$.
\end{theorem}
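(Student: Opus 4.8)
The plan is to follow Lawson's original argument \cite{L1,L2}: solve the Plateau problem for $\Gamma$, then use the geometric hypotheses (A)--(D) to show the least-area disk is embedded, lies in $\mathcal{C}(\Gamma)$, touches $\partial\mathcal{C}(\Gamma)$ only along $\Gamma$, and is free of branch points, and finally extend it by the $\pi$-rotations generating $G_\Gamma$ and check that the copies fit together. \emph{Step 1 (Plateau).} Since $S^3_2$ is a compact real-analytic Riemannian manifold and $\Gamma$ is a rectifiable Jordan curve, Morrey's solution of the Plateau problem in Riemannian manifolds produces a conformal, weakly harmonic map $f\colon\triangle\to S^3_2$, continuous up to $\partial\triangle$, sending $\partial\triangle$ monotonically onto $\Gamma$, whose image $\mathcal{M}_\Gamma$ has least area among spanning disks; a priori $f$ may have finitely many interior branch points, and its boundary behaviour at the vertices $v_t$ is governed by the corner angles $\pi/(l_t+1)$, with standard regularity up to the (geodesic) open edges.

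\emph{Step 2 (Confinement and embeddedness).} By (A), $\Gamma$ lies in an open hemisphere of $S^3_2$, which is geodesically convex, so the maximum principle (applied to harmonic coordinates, or via the convex-hull property of minimal submanifolds) confines $\mathcal{M}_\Gamma$ to that hemisphere. At each edge $\gamma_t$, properness provides one of $S(\gamma_{t-1},N_t)$, $S(\gamma_t,N_t)$ that bounds $\Gamma$; each such totally geodesic two-sphere is a barrier, so $\mathcal{M}_\Gamma$ stays on the $\Gamma$-side of it, and intersecting all these barriers with the hemisphere yields $\mathcal{M}_\Gamma\subset\mathcal{C}(\Gamma)$ and $\mathcal{M}_\Gamma\cap\partial\mathcal{C}(\Gamma)=\Gamma$. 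The strong maximum principle then gives embeddedness: two sheets in tangential contact would coincide, and an area minimizer with embedded polygonal boundary admits no interior self-intersection either (a cut-and-paste exchange would strictly lower the area).

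\emph{Step 3 (No branch points).} This is the technical heart, and where (B)--(D) enter. Following Lawson, one studies $\rho\circ f\colon\triangle\to\triangle$: since $\rho$ carries $\Gamma$ monotonically onto $\partial\triangle$, this composition is a proper degree-one map, so it suffices to show it has no critical points, whence $\rho|_{\mathcal{M}_\Gamma}$ is a diffeomorphism onto $\triangle$ and $\mathcal{M}_\Gamma$ is simultaneously embedded and unbranched. A critical point $q$ of $\rho\circ f$ lying over an interior point of $\triangle$ would be either a branch point of $\mathcal{M}_\Gamma$ or a point where $T_q\mathcal{M}_\Gamma=\ker d\rho$; in either case the geodesic two-sphere $S$ through $q$ realizing the appropriate tangency would meet $\mathcal{M}_\Gamma$, hence $\Gamma$, in at least four components (minimal surfaces are real-analytic, so the local contact has even order $\geq 2$, and monotonicity of $\rho|_\Gamma$ controls the count on $\Gamma$), so $S\in\mathcal{S}_\Gamma$, contradicting the rank-two condition in (D) on $d(\rho|_{S\cap\mathcal{C}(\Gamma)^\circ})$; condition (B) is precisely what allows one, near an arbitrary interior point, to find instead a two-sphere \emph{not} in $\mathcal{S}_\Gamma$, thereby pinning down where such a $q$ could occur. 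Branch points on the open edges are excluded after Step 4 (they would become interior branch points of the reflected surface), and at a vertex $v_t$ the angle $\pi/(l_t+1)$ is exactly the value making the corner a smooth point once $\mathcal{M}_\Gamma$ is unfolded $(l_t+1)$ times; when one of the two candidate barrier spheres at $v_t$ fails to bound $\Gamma$, condition (C) forces $l_t=1$, so only a single $\pi$-rotation is needed there and smoothness is automatic. Hence $\mathcal{M}_\Gamma$ is an embedded minimal disk with $\partial\mathcal{M}_\Gamma=\Gamma$ and $\mathcal{M}_\Gamma\cap\partial\mathcal{C}(\Gamma)=\Gamma$.

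\emph{Step 4 (Reflection and assembly).} Each edge $\gamma_t$ is a geodesic, so the $\pi$-rotation $R_t$ about it is an isometry of $S^3_2$ fixing $\gamma_t$; the surface $\mathcal{M}_\Gamma\cup R_t(\mathcal{M}_\Gamma)$ meets $\gamma_t$ with matching tangent planes, hence is $C^1$ and minimal across $\gamma_t$, and elliptic regularity (analyticity of minimal surfaces) makes it smooth — this is Lawson's reflection principle. Iterating over all edges generates $G_\Gamma$; around $v_t$ the two adjacent reflections generate a dihedral group of order $2(l_t+1)$ and the $l_t+1$ isometric copies of $\mathcal{M}_\Gamma$ fill out the full angle $2\pi$, so the extension is smooth at the vertices too. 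Finally, exactly as in the proof of Lemma \ref{Lem:4gon}(d), convexity and properness of $\Gamma$ force the convex hulls $R(\mathcal{C}(\Gamma))$, $R\in G_\Gamma$, to meet only along boundary faces, so the copies $R(\mathcal{M}_\Gamma)$ meet only along shared edges; hence $M_\Gamma=\bigcup_{R\in G_\Gamma}R(\mathcal{M}_\Gamma)$ is a complete, non-singular, embedded minimal surface in $S^3_2$, compact precisely when $G_\Gamma$ (equivalently the orbit) is finite. I expect Step 3 — the branch-point exclusion, which is exactly what conditions (B) and (D) are built for — together with the global part of Step 4 (promoting the local reflection principle to a genuinely non-self-intersecting surface) to be the main obstacles; everything else is the by-now standard package of Plateau existence plus maximum-principle barriers.
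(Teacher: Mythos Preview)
The paper does not prove this theorem: it is quoted from Lawson \cite{L1} (see the Remark immediately following the statement), and the paper's own work consists of verifying the hypotheses (A)--(D), or rather (A), (B), (D) and the replacement (C*), for the specific fundamental quadrilaterals. Your outline is a reasonable summary of Lawson's original argument and is broadly correct in structure.

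That said, there is one genuine confusion worth flagging. In Step~2 you attribute the confinement $\mathcal{M}_\Gamma\subset\mathcal{C}(\Gamma)$ to \emph{properness} and the spheres $S(\gamma_{t-1},N_t)$, $S(\gamma_t,N_t)$. This is not right: those spheres are attached to \emph{vertices}, not edges, and they do not bound $\mathcal{C}(\Gamma)$. Confinement comes instead from \emph{convexity}: $\mathcal{C}(\Gamma)$ is by definition an intersection of closed hemispheres, each bounded by a totally geodesic $2$-sphere containing a face of $\partial\mathcal{C}(\Gamma)$, and it is these face-spheres that serve as barriers. Properness and the spheres $S(\gamma_{t-1},N_t)$, $S(\gamma_t,N_t)$ enter only later, in Lawson's Lemma~4.3, to control the tangent plane of $\mathcal{M}_\Gamma$ at the vertices --- which is exactly the step governed by condition~(C) (and which the present paper replaces by the weaker condition~(C*)). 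Your account of Step~3 also blurs the separate roles of (B) and (D): condition~(B) guarantees, through each interior point, a sphere meeting $\Gamma$ in at most two components, which via intersection theory for minimal surfaces rules out interior branch points; condition~(D) then supplies the global projection $\rho$ used to upgrade local injectivity to global embeddedness. These are distinct mechanisms, not two halves of a single contradiction argument as your sketch suggests.
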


\begin{remark}
The above theorem is corresponding to Theorem 1 in \cite{L1}. For our use, we have changed the statement a little bit. For examples, we state the results for $S^3_2$ rather than $S^3$, and the ``$\pi$-rotation'' is called ``reflection'' in \cite{L1}. However, all the contents are essentially contained in section 4 of \cite{L1}.
\end{remark}

By Lemma \ref{Lem:4gon}, we also need to show that the fundamental quadrilateral $KLMN$ is proper and satisfies conditions (B--D). However, $KLMN$ may not satisfy (C) in some cases. The condition (C) is used in Lemma 4.3 of \cite{L1} to show that:

(C*) At each vertex $v$ of $M_\Gamma$ there exists a small disk neighborhood $B$ of $v$ and a geodesic two sphere $S$ passing $v$ such that $\partial B\cap S$ contains only two points.

Then by Theorem 3 of \cite{L2}, $M_\Gamma$ is non-singular. In our cases, we will verify the condition (C*), and then obtain the conclusions of Theorem \ref{Thm:exist}.

\subsection{The maximum principle and basic lemmas}

To verify the conditions, we need some lemmas about the maximum principle, which states that if two minimal surfaces $M_1$ and $M_2$ meet at an interior point of each surface and $M_1$ locally lies on one side of $M_2$ near the intersection, then the surfaces $M_1$ and $M_2$ coincide near the intersection, see \cite{KPS} and \cite{M}.

Let $B_+$ and $B_-$ be the open hemispheres in $S^3_2$ centred at $2$ and $-2$ respectively, and let $\overline{B}_+$ be the closure of $B_+$. Let $D_+$ be the two dimensional open hemisphere centered at $2$ which is orthogonal to the fibre $\mathcal{P}^{-1}(i)$, and let
$$S^1=\{[e^{-i\tau}, e^{i\tau}]\in SO(4)\mid \tau\in \mathbb{R}\}.$$
Then modular the $S^1$-action, we have a map $\rho : S^3_2\setminus\mathcal{P}^{-1}(-i)\rightarrow D_+$, which gives a fibration. Figure \ref{fig:fibration} gives a sketch map of this fibration. Then we have the following version of the maximum principle.

\begin{figure}[h]
\centerline{\includegraphics{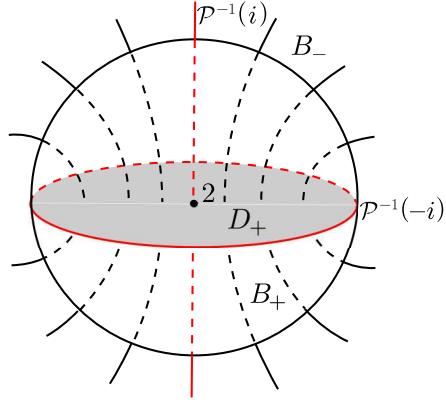}}
\caption{The fibration given by $\rho$}\label{fig:fibration}
\end{figure}

\begin{lemma}\label{Lem:maxP}
Suppose that $M_1$ and $M_2$ are two minimal surfaces in $B_+$ and $B_-$ respectively, and the boundary of each surface is a simple closed curve. Then $M_2$ can be rotated via the $S^1$-action, and when the first time $M_2$ intersects $M_1$, they must intersect at some boundary point of $M_1$ or $M_2$.
\end{lemma}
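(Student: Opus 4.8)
The plan is to argue by a "first contact" / maximum principle analysis. Recall $B_+$ and $B_-$ are disjoint open hemispheres (centered at $2$ and $-2$), so a priori $M_1\subset B_+$ and $M_2\subset B_-$ are disjoint; the $S^1$-action rotates $M_2$ inside $\overline{B}_-$ and we track the first parameter $\tau_0$ at which the rotated copy $M_2^{\tau_0}$ meets $M_1$. The key structural fact I would exploit is that the $S^1$-action is exactly the fibre-rotation of the fibration $\rho\colon S^3_2\setminus\mathcal{P}^{-1}(-i)\to D_+$, so $M_2^{\tau}$ sweeps out a subset foliated transversally to these circles, and $M_1$ likewise projects to $D_+$; the first point of contact will occur over a point of $D_+$ where the $\rho$-images of $M_1$ and $M_2$ first touch. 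First I would show that a first contact point $x$ exists and is unique-ish enough to be analyzed: since $M_1$ is compact (closure of a surface with simple closed boundary) and the orbit $\{M_2^\tau\}$ is compact, the set of $\tau$ for which $M_2^\tau\cap M_1\neq\emptyset$ is closed and nonempty once we pass the "far" configuration, so a minimal such $\tau_0$ exists.

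Next, suppose for contradiction that at this first contact time $\tau_0$ the intersection point $x$ is an interior point of both $M_1$ and of $M_2^{\tau_0}$. Here I would invoke the interior maximum principle quoted just before the lemma: because $M_2^\tau$ approached $M_1$ monotonically as $\tau\nearrow\tau_0$ (the surfaces were disjoint just before), near $x$ the surface $M_1$ lies on one side of $M_2^{\tau_0}$; both are minimal; hence they coincide on a neighborhood of $x$. A standard continuation/connectedness argument (the set where two minimal surfaces agree to infinite order is open and closed) then forces $M_1$ and $M_2^{\tau_0}$ to coincide on the full component containing $x$, i.e.\ $M_1=M_2^{\tau_0}$ as sets (both being connected surfaces-with-boundary whose closures meet). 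But $M_1\subset \overline{B}_+$ and $M_2^{\tau_0}\subset \overline{B}_-$, and $\overline{B}_+\cap\overline{B}_-=\mathcal{P}^{-1}(\text{equator})$ is the common boundary two-sphere, which cannot contain an honest surface unless $M_1$ is contained in that equatorial $S^2$ — contradicting $M_1\subset B_+$ open. Hence $x$ cannot be interior to both, so $x$ is a boundary point of $M_1$ or of $M_2^{\tau_0}$, which is exactly the claim (renaming $M_2^{\tau_0}$ back to the rotated $M_2$).

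A few gaps I would need to fill carefully. I must make sure the "monotone approach" hypothesis of the interior maximum principle is legitimate: as $\tau$ increases through $\tau_0$ the surface $M_2^\tau$ may not move monotonically in a naive normal sense, but since it is disjoint from $M_1$ for all $\tau<\tau_0$ (in a one-sided neighborhood), $M_1$ is locally contained in the closed region bounded on one side by $M_2^{\tau_0}$, and that one-sided local containment is all the maximum principle requires — I would spell this out using the fibration picture of Figure~\ref{fig:fibration}, noting that the $S^1$-orbits are disjoint from (or transverse to) both surfaces away from $\mathcal{P}^{-1}(\pm i)$. The main obstacle I expect is precisely this bookkeeping of "which side": one has to rule out the degenerate possibility that $M_1$ and $M_2^{\tau_0}$ are tangent along the fibre direction in a way that defeats the one-sidedness, and to handle the special fibres $\mathcal{P}^{-1}(\pm i)$ (the centers of $D_+$ and the missing fibre) where $\rho$ is not a genuine fibration; I would dispose of these by observing that $M_1\subset B_+$ and $M_2\subset B_-$ keep the relevant first-contact point away from $\mathcal{P}^{-1}(-i)$ entirely, and near $\mathcal{P}^{-1}(i)$ one can use a local geodesic-normal chart instead of $\rho$.
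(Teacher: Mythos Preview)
The paper gives no proof of this lemma; it is stated immediately after the maximum principle (quoted from \cite{KPS} and \cite{M}) and treated as a direct consequence. Your first-contact argument is exactly the standard way to unpack that consequence, and the overall structure (existence of a first time $\tau_0$, one-sidedness at the touching point, interior maximum principle, unique continuation) is correct.

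There is, however, a genuine error in your contradiction step. You assert that ``the $S^1$-action rotates $M_2$ inside $\overline{B}_-$'' and then that ``$M_2^{\tau_0}\subset \overline{B}_-$''. This is false: the orbits of $S^1=\{[e^{-i\tau},e^{i\tau}]\}$ are precisely the fibres of $\rho$ drawn in Figure~\ref{fig:fibration}, and these circles pass from $B_-$ into $B_+$ (indeed the real part of $h^{[e^{-i\tau},e^{i\tau}]}$ is $t\cos 2\tau - x\sin 2\tau$, which changes sign). The whole point of the rotation is to carry $M_2$ out of $B_-$ until it meets $M_1\subset B_+$; at time $\tau_0$ the rotated surface is certainly not contained in $\overline{B}_-$, so the ``$\overline{B}_+\cap\overline{B}_-$ is the equatorial $2$-sphere'' argument collapses.

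The repair is easy and actually simpler than what you wrote. Assume every point of $M_1\cap M_2^{\tau_0}$ is interior to both surfaces. Pick such a point $x$; the one-sidedness and the maximum principle give local coincidence, and unique continuation (minimal surfaces in $S^3_2$ are real-analytic) forces $M_1=M_2^{\tau_0}$ as subsets. But then $\partial M_1=\partial M_2^{\tau_0}$, so boundary points of $M_1$ lie in the intersection, contradicting the assumption. Equivalently, in this degenerate case the conclusion of the lemma holds outright. Either way you are done without ever invoking the hemispheres.
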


\begin{lemma}\label{Lem:unique}
If $\Gamma$ is a simple closed curve which bounds two minimal surfaces $M_1$ and $M_2$ in $B_+$, and the map $\rho\mid M_1\cup M_2$ is one to one on $(\rho\mid M_1\cup M_2)^{-1}\rho(\Gamma)$, then $M_1$ and $M_2$ coincide.
\end{lemma}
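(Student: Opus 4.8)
The plan is to use Lemma \ref{Lem:maxP} as a comparison tool, exploiting the fibration $\rho$ and the fact that $M_1$ and $M_2$ share the same boundary $\Gamma$. First I would set up the picture: both $M_1$ and $M_2$ lie in the open hemisphere $B_+$, with $\partial M_1 = \partial M_2 = \Gamma$ a simple closed curve, and the hypothesis says that $\rho$ restricted to $M_1 \cup M_2$ is injective on the preimage of $\rho(\Gamma)$. Since $\Gamma \subset B_+$ and $B_+ \cap \mathcal{P}^{-1}(-i) = \emptyset$, the map $\rho$ is defined on all of $M_1 \cup M_2$; the injectivity hypothesis on $(\rho\mid M_1\cup M_2)^{-1}\rho(\Gamma)$ pins down how $\Gamma$ sits relative to the fibres of $\rho$, in particular forcing $\Gamma$ to meet each fibre of $\rho$ through a point of $\rho(\Gamma)$ in a single point of $M_1$ and a single point of $M_2$ — i.e.\ $M_1$ and $M_2$ agree along $\Gamma$ and stay matched up under $\rho$ near the boundary.

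The core of the argument is a maximum-principle sweep. I would apply the $S^1$-action to one of the surfaces, say rotate $M_2$ by $[e^{-i\tau},e^{i\tau}]$ to get $M_2^\tau$, and look for the first value of $|\tau|$ (in each direction) at which $M_2^\tau$ touches $M_1$; by Lemma \ref{Lem:maxP}-type reasoning, since $M_1$ and $M_2$ both bound the same $\Gamma$ and the $S^1$-action fixes $\Gamma$ only for $\tau$ a multiple of the fibre-return, the first contact is forced to be interior unless the surfaces already coincide. More precisely: if $M_1 \ne M_2$, then (after possibly relabelling) $M_2$ lies strictly to one side of $M_1$ near some interior point — this is where the $\rho$-injectivity hypothesis is essential, since it guarantees that over each point of the disk $\rho(M_i)$ the fibre meets $M_1$ and $M_2$ each exactly once, so that ``$M_2$ lies on one side of $M_1$'' makes sense globally and is detected by comparing the two $S^1$-heights fibrewise. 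Then the interior maximum principle for minimal surfaces forces $M_1 = M_2$ in a neighbourhood of that contact point, and a standard connectedness/analytic-continuation argument propagates the coincidence to all of $M_1 = M_2$.

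Concretely the steps are: (1) observe $\rho$ is well-defined and smooth on $M_1 \cup M_2$ and that the hypothesis makes the fibrewise ``difference'' function $h = (\text{$S^1$-height of }M_2) - (\text{$S^1$-height of }M_1)$ a well-defined continuous function on the disk $\rho(\Gamma)$-region, vanishing on $\partial$; (2) if $h \not\equiv 0$, it attains a nonzero interior extremum, at which $M_1$ and $M_2$ are tangent with one lying to one side of the other; (3) invoke the maximum principle (as quoted in \S\ref{Sec:existence}, following \cite{KPS}, \cite{M}) to conclude $M_1$ and $M_2$ coincide near that point; (4) use that the coincidence set is open and closed in $M_1$ (closed by continuity, open by the same local maximum-principle/unique-continuation statement) and nonempty to get $M_1 = M_2$. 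I expect the main obstacle to be step (1)–(2): carefully justifying that $\rho$ restricted to each $M_i$ is a diffeomorphism onto its image (so that $h$ is genuinely a function and its extrema correspond to honest tangencies of $M_1$ and $M_2$), using only the stated injectivity of $\rho$ on the boundary fibres together with the fact that $M_i$ is a minimal — hence, away from branch points, locally graphical — surface; the rest is a routine application of the maximum principle and a connectedness argument.
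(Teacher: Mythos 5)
Your concrete argument (steps (1)--(4)) hinges on the fibrewise height function $h$ being well defined, which requires that $\rho$ restricted to each $M_t$ ($t=1,2$) be injective on \emph{all} of $M_t$, i.e.\ that each surface is a graph over the fibration given by $\rho$. But the hypothesis of the lemma only gives injectivity of $\rho\mid M_1\cup M_2$ over the boundary curve $\rho(\Gamma)$; a fibre over an interior point could a priori meet $M_1$ (or $M_2$) several times, in which case $h$ is not a function and an ``interior extremum'' need not correspond to a one-sided tangency to which the maximum principle applies. You flag this graphicality as the main obstacle but do not close it, and it does not follow from the stated hypothesis alone without an additional Rad\'o-type theorem. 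This is exactly what the paper's proof is designed to avoid: it first maps $M_2$ into the opposite hemisphere $B_-$ by the $\pi$-rotation around $\mathcal{P}^{-1}(-i)$, then sweeps the image back toward $M_2$ by the $S^1$-action in each of the two directions. By Lemma \ref{Lem:maxP} the first contact of the swept surface with $M_1$ occurs at a \emph{boundary} point; since the swept boundary stays on the fibres over $\rho(\Gamma)$ and, by hypothesis, those fibres meet $M_1\cup M_2$ only along $\Gamma$, that first contact can only happen once the sweep has returned the surface to the original position of $M_2$. Doing this in both directions traps $M_1$ between the two sides of $M_2$ and forces $M_1=M_2$, with no interior graphicality ever needed.

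A smaller but real error: your description of the sweep (``the first contact is forced to be interior unless the surfaces already coincide'') inverts the content of Lemma \ref{Lem:maxP}, which asserts precisely that the first contact must be at a boundary point of one of the surfaces; moreover, starting the rotation at $\tau=0$, where the two surfaces already share $\Gamma$, makes ``first contact'' vacuous --- one must first move $M_2$ away into $B_-$, as the paper does, before sweeping back.
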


\begin{proof}
By the $\pi$-rotation around $\mathcal{P}^{-1}(-i)$, we can map $M_2$ to a minimal surface $M_2'$ in $B_-$. Then via the $S^1$-action, there are two directions to move $M_2'$ back to $M_2$. By Lemma \ref{Lem:maxP}, when it first meets $M_1$, the boundaries of the two surfaces must coincide. Hence $M_1$ and $M_2$ coincide.
\end{proof}

To our propose, we will mainly consider the case that $\Gamma$ is a geodesic quadrilateral in $\overline{B}_+$ with vertices $v_0$, $v_1$, $v_2$, $v_3$ and edges $\gamma_0$, $\gamma_1$, $\gamma_2$, $\gamma_3$. Let $s_t$ denote the length of $\gamma_t$, and let $\tau_t$ denote the angle at $v_t$, where $t=0,1,2,3$.

\begin{lemma}\label{Lem:position}
For a geodesic quadrilateral $\Gamma$ in $\overline{B}_+$, if $s_t\leq \pi$, $\tau_t\leq\pi/2$, $0\leq t\leq 3$, and in $\mathcal{C}(\Gamma)$ the two geodesics $v_0v_2$ and $v_1v_3$ have a common perpendicular $o_1o_2$ such that $o_t$ lies in the interior of $v_{t-1}v_{t+1}$, $t=1,2$, then there exists $\eta'\in SO(4)$ such that $\eta'(\Gamma)\subset \overline{B}_+\setminus\mathcal{P}^{-1}(-i)$ and $\eta'(o_1o_2)=\{2e^{is/2}\in S^3_2\mid s\in [0,d]\}$, where $d$ is the length of $o_1o_2$. Moreover, for $\Gamma'=\eta'(\Gamma)$ the map $\rho\mid \mathcal{C}(\Gamma')$ is one to one on $(\rho\mid \mathcal{C}(\Gamma'))^{-1}\rho(\Gamma')$, and in $\mathcal{C}(\Gamma')$ the minimal surface bounded by $\Gamma'$ is unique.
\end{lemma}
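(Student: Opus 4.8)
The plan is to prove Lemma \ref{Lem:position} in two stages: first construct the isometry $\eta'$ that puts $\Gamma$ in a standard position relative to the fibration $\rho$, then use that standard position together with Lemma \ref{Lem:unique} to deduce uniqueness. For the first stage, I would start from the common perpendicular $o_1o_2$ of the diagonals $v_0v_2$ and $v_1v_3$ inside $\mathcal{C}(\Gamma)$. Since $SO(4)$ acts transitively on (oriented) geodesic segments of a given length $d\le\pi$ and on the normal data at their endpoints, there is $\eta'\in SO(4)$ carrying $o_1o_2$ onto the fibre arc $\{2e^{is/2}: s\in[0,d]\}\subset D_+$; by further composing with an element of the stabilizer of this arc (a copy of $SO(2)\times SO(2)$ acting on the two normal directions) I can arrange that the diagonals $v_0v_2$ and $v_1v_3$, which are perpendicular to the fibre at $o_1$ and $o_2$ respectively, lie in the ``horizontal'' directions matching the model fibration of Figure \ref{fig:fibration}. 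Then I must check that $\eta'(\Gamma)$ avoids $\mathcal{P}^{-1}(-i)$: this follows from the edge-length and angle bounds $s_t\le\pi$, $\tau_t\le\pi/2$ together with the fact that $o_t$ lies in the \emph{interior} of the diagonal through it, which keeps all four vertices within controlled distance of $o_1o_2$ and hence inside $\overline{B}_+\setminus\mathcal{P}^{-1}(-i)$.

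The second stage is to verify that $\rho$ restricted to the convex hull $\mathcal{C}(\Gamma')$ is injective on the $\rho$-fibres meeting $\rho(\Gamma')$. The idea is that with $\Gamma'$ in standard position, each $\rho$-fibre (an orbit of the $S^1$-action $[e^{-i\tau},e^{i\tau}]$) meets the tetrahedron $\mathcal{C}(\Gamma')$ in at most a single point over the relevant region: the two diagonals lie in the two-sphere $D_+$-section transverse to the fibres, and the edge-length/angle bounds guarantee that $\mathcal{C}(\Gamma')$ is a ``thin'' geodesic tetrahedron that projects diffeomorphically onto its image under $\rho$ near $\Gamma'$. I would make this precise by computing $\rho$ in coordinates adapted to the fibre arc $\{2e^{is/2}\}$ and showing the Jacobian is nonsingular on $\mathcal{C}(\Gamma')$, using that all of $\Gamma'$ stays in $\overline{B}_+$ away from $\mathcal{P}^{-1}(-i)$ where $\rho$ is genuinely a submersion with contractible fibres.

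Once injectivity of $\rho\mid\mathcal{C}(\Gamma')$ over $\rho(\Gamma')$ is established, the uniqueness of the minimal surface bounded by $\Gamma'$ in $\mathcal{C}(\Gamma')$ is immediate from Lemma \ref{Lem:unique}: any two such minimal surfaces $M_1,M_2$ have $\rho\mid M_1\cup M_2$ one-to-one on $(\rho\mid M_1\cup M_2)^{-1}\rho(\Gamma')$ because $M_1\cup M_2\subset\mathcal{C}(\Gamma')$, so they coincide. The final conclusion for $\Gamma$ itself follows by transporting back via $(\eta')^{-1}$, which is an isometry and hence preserves minimality and the convex hull.

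The main obstacle I expect is the second stage: carefully verifying that $\rho$ is injective on the relevant part of $\mathcal{C}(\Gamma')$. The transitivity argument in stage one is routine Lie-group bookkeeping, but controlling the geometry of the geodesic tetrahedron $\mathcal{C}(\Gamma')$ relative to the $S^1$-fibres — in particular ruling out that some fibre re-enters $\mathcal{C}(\Gamma')$ a second time — requires genuinely using the hypotheses that $o_1,o_2$ are interior points of the diagonals and that all edges and angles are suitably small, and translating these into a quantitative statement about the fibration $\rho$. This is where the ``common perpendicular of the diagonals'' hypothesis does its real work, and where the proof will need the most care.
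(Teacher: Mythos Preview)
Your overall architecture is right, and stage one plus the final appeal to Lemma \ref{Lem:unique} match the paper. The extra alignment of the diagonals via the $SO(2)\times SO(2)$ stabilizer is unnecessary (the paper only moves $o_1o_2$ onto the fibre arc and $o_1$ to $2$), and your remark that ``the two diagonals lie in the two-sphere $D_+$-section'' is not correct: only the diagonal through $o_1=2$ lies in $D_+$, while the other lies in the parallel horizontal $2$-sphere through $o_2=2e^{id/2}$.

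The genuine gap is in stage two. Your proposed method --- computing the Jacobian of $\rho$ and showing it is nonsingular on $\mathcal{C}(\Gamma')$ --- cannot give what is needed. The map $\rho$ goes from a $3$-dimensional set to a $2$-dimensional disk, so ``Jacobian nonsingular'' at best means $\rho$ is a submersion; it cannot yield injectivity on individual fibres, which is a \emph{global} statement about each $\rho$-fibre meeting $\mathcal{C}(\Gamma')$ at most once. Local regularity does not rule out a fibre re-entering the tetrahedron. The paper's argument is short and purely synthetic: given a $\rho$-fibre $f$ meeting $\Gamma'$, note that $f$ does not pass through $2$, and take the unique great $2$-sphere $S$ containing $f$ and $2$. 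The key geometric fact is that every $\rho$-fibre is orthogonal to $\partial B_+$ (which contains $\mathcal{P}^{-1}(-i)$); hence inside the closed hemisphere $S\cap\overline{B}_+$, the arc $f$ is a ``longitude'' perpendicular to the equator, and it meets the convex slice $\mathcal{C}(\Gamma')\cap S$ in at most one point. This single observation replaces the analytic computation you propose and is where the hypothesis that $\Gamma'\subset\overline{B}_+$ is really used. You should also note, as the paper does, that once this injectivity is established one may rotate $\Gamma'$ slightly via the $S^1$-action into the \emph{open} ball $B_+$ before invoking Lemma \ref{Lem:unique}, since that lemma is stated for surfaces in $B_+$.
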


\begin{figure}[h]
\centerline{\includegraphics{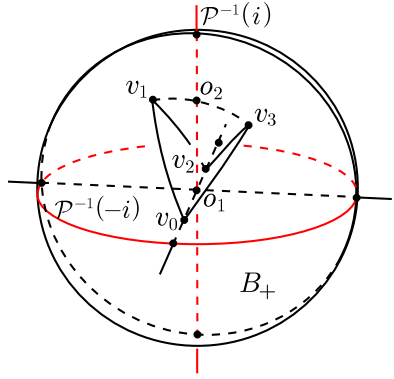}}
\caption{The geodesic quadrilateral $\eta'(\Gamma)$ in $\overline{B}_+$}\label{fig:eta1gam}
\end{figure}

\begin{proof}
We can choose an $\eta'\in SO(4)$ such that $\eta'(o_1o_2)=\{2e^{is/2}\in S^3_2\mid s\in [0,d]\}$ and $\eta'(o_1)=2$. Since $s_t\leq \pi$, the distances between $o_1$ and $v_1$, $v_3$ and the distances between $o_2$ and $v_0$, $v_2$ are not bigger than $\pi$. Hence $d\leq \pi$, and $\eta'(\Gamma)\subset \overline{B}_+$. Since $\tau_t\leq\pi/2$, lengths of $v_0v_2$ and $v_1v_3$ are at most $\pi$. Hence $\eta'(\Gamma)\cap \mathcal{P}^{-1}(-i)=\emptyset$.

If a fibre $f$ of the fibration given by $\rho$ intersects $\Gamma'$, then $f$ does not pass $2$. Consider the geodesic two sphere $S$ which contains $f$ and $2$. Since $f$ is orthogonal to the boundary of $B^+$ which contains $\mathcal{P}^{-1}(-i)$, in $S$ the fibre $f$ can only intersect $\mathcal{C}(\Gamma')\cap S$ at one point. Hence the map $\rho\mid \mathcal{C}(\Gamma')$ is one to one on $(\rho\mid \mathcal{C}(\Gamma'))^{-1}\rho(\Gamma')$. Then $\Gamma'$ can be rotated into $B_+$ via the $S^1$-action, and by Lemma \ref{Lem:unique}, in $\mathcal{C}(\Gamma')$ the minimal surface bounded by $\Gamma'$ is unique.
\end{proof}


\subsection{Verification of the conditions}

At what follows, we show that $KLMN$ is proper and satisfies the conditions (B), (D) and (C*).

\begin{lemma}\label{Lem:Sposition}
For a given vertex $v$ of the fundamental quadrilateral $KLMN$ there exists $\eta\in SO(4)$ such that $\eta$ preserves the enlarged Hopf fibration of $S^3_2$, maps $v$ to $v_0=2$, and maps the two edges containing $v$ to the two geodesics:
\begin{align*}
&2(\cos(s/2)+j\sin(s/2)), s \in [0,s_0], \\ &2(\cos(s/2)+e^{i\pi/l}j\sin(s/2)), s \in [0,s_3],
\end{align*}
where $\pi/l \in [0,\pi/2]$ is the angle at $v$, $s_0$ and $s_3$ are the lengths of the two edges, and $s_0\geq s_3$. Moreover, $\Gamma=\eta(KLMN)$ satisfies the conditions in Lemma \ref{Lem:position}. In $\mathcal{C}(\Gamma)$ the minimal surface bounded by $\Gamma$ is unique, and so does $KLMN$.
\end{lemma}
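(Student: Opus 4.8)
The plan is to construct the desired $\eta \in SO(4)$ as a composition of the standard fibration-preserving moves listed in Lemma \ref{Lem:trans}, and then to check that the resulting quadrilateral $\Gamma = \eta(KLMN)$ meets the hypotheses of Lemma \ref{Lem:position}, so that the uniqueness conclusion follows directly from that lemma (uniqueness for $KLMN$ itself then follows since $\eta$ is an isometry).

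First I would place the vertex $v$ at $2$. Let $u = \mathcal{P}(r)$ where $r$ is a lift of $v$; after applying a suitable $[1,q]$ (part (b) of Lemma \ref{Lem:trans}) we may assume $u = i$, hence $v$ lies on the fibre $\mathcal{P}^{-1}(i) = \{2e^{i\theta/2}\}$, and then after applying $[e^{i\tau/2},1]$ (part (a)) we may assume $v = 2$. At $v = 2$ the two edges containing $v$ are geodesics through $2$ meeting the fibre $\mathcal{P}^{-1}(i)$ orthogonally (by the construction of the fundamental quadrilaterals and Lemma \ref{Lem:lift}(c1)); by Lemma \ref{Lem:geodesic} each such geodesic has the form $2(\cos(s/2) + e^{i\sigma}j\sin(s/2))$ for some $\sigma$, with tangent $e^{i\sigma}j$ at $2$. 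Using one further rotation $[1, e^{i\rho/2}]$ about $i$ — which fixes $\mathcal{P}^{-1}(i)$ pointwise up to the $\theta$-shift already absorbed, and rotates the tangent plane at $2$ — I would rotate so that the longer of the two edges (length $s_0 \ge s_3$, relabelling if necessary) points in the direction $j$. Since the angle between the two edges at $v$ is $\pi/l \le \pi/2$ (this holds for every vertex of every fundamental quadrilateral by the table in Section \ref{ssec:fund}), the other edge then points in the direction $e^{i\pi/l}j$ or $e^{-i\pi/l}j$, and after possibly composing with the reflection $[e^{i\tau}j, e^{i\tau}j]$ (part (c), with $\tau$ chosen to fix the $j$-direction) we may take it to be $e^{i\pi/l}j$. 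This gives the two displayed geodesics. All maps used preserve the enlarged Hopf fibration, so $\eta$ does too.

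Next I would verify the hypotheses of Lemma \ref{Lem:position} for $\Gamma = \eta(KLMN)$. The bounds $s_t \le \pi$ and $\tau_t \le \pi/2$ for all $t$ are read off from the table in Section \ref{ssec:fund} (each listed edge length is $\le \pi$ — noting $\psi(t) = \arccos t \in [0,\pi]$ and that the entries $2\pi/l$ occur only when $l \ge 2$ — and each listed angle is among $\pi/2, \pi/3, \pi/4, \pi/5$, hence $\le \pi/2$; here we are in the case $m,n \ge 2$). The remaining hypothesis is the existence of a common perpendicular $o_1 o_2$ of the diagonals $v_0 v_2$ and $v_1 v_3$ inside $\mathcal{C}(\Gamma)$, with each foot $o_t$ interior to the corresponding diagonal. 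For this I would invoke Lemma \ref{Lem:4gon}(c): since $m,n \ge 2$, $\mathcal{C}(\Gamma)$ is a genuine geodesic tetrahedron lying in an open hemisphere of $S^3_2$; within an open hemisphere spherical geometry is as well-behaved as hyperbolic/Euclidean geometry of a tetrahedron, so two opposite edges (the diagonals) of a non-degenerate tetrahedron admit a unique common perpendicular whose feet are interior to those edges — this is where I would argue that the angle bounds $\tau_t \le \pi/2$ force the feet to be interior rather than at the endpoints. Once Lemma \ref{Lem:position} applies, it yields an $\eta' \in SO(4)$ and the uniqueness of the minimal surface bounded by $\Gamma'=\eta'(\Gamma)$ in $\mathcal{C}(\Gamma')$; transporting back by $(\eta'\eta)^{-1}$ gives uniqueness for $KLMN$.

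The main obstacle I expect is the geometric claim that the two diagonals of the tetrahedron $\mathcal{C}(\Gamma)$ have a common perpendicular with interior feet. The existence of \emph{a} common perpendicular of two skew geodesics in an open hemisphere is standard, but controlling where its feet land — ensuring $o_t$ is strictly interior to $v_{t-1}v_{t+1}$ rather than on the boundary or outside the segment — is the delicate point, and is exactly where the quantitative hypotheses ($s_t \le \pi$, $\tau_t \le \pi/2$, and the normalization putting one vertex at $2$ with edges along $j$ and $e^{i\pi/l}j$) must be used. A clean way to handle it is to work in the open hemisphere $B_+$ after the normalization of the first paragraph: then $v_0 = 2$ is a vertex, the two edges at $v_0$ are explicit, and a direct comparison-geometry estimate (or an explicit computation of the foot of the perpendicular from $v_0$'s opposite vertex) shows interiority. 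The rest of the argument is bookkeeping with Lemma \ref{Lem:trans} and a table lookup.
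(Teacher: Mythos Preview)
Your construction of $\eta$ via Lemma~\ref{Lem:trans} is fine and matches the paper. The gap is exactly where you flag it: the existence of a common perpendicular $o_1o_2$ of the diagonals with \emph{interior} feet. Your proposed justification --- that in an open hemisphere the opposite edges of a non-degenerate tetrahedron automatically have such a perpendicular, with the angle bounds $\tau_t\le\pi/2$ forcing interiority --- is not a theorem you can simply cite, and the ``direct comparison-geometry estimate'' you gesture at is not carried out. Even in Euclidean space the common perpendicular of two opposite edges of a tetrahedron need not have its feet on the segments themselves, so some genuine input specific to these quadrilaterals is required.

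The paper does not attempt a uniform geometric argument here; instead it exploits the extra symmetry every fundamental quadrilateral carries. For the quadrilaterals with opposite edges of equal length and opposite angles equal (all cases except $Q_D(l,\pi/2)$, $Q_O(\pi/2)$, $Q_O(\pi/4)$, $Q_I(\pi/2)$), there is a $\pi$-rotation swapping $K\leftrightarrow M$ and $L\leftrightarrow N$; its axis passes through the midpoints of both diagonals and is perpendicular to each, so the midpoints themselves serve as $o_1,o_2$. For the four remaining cases the downstairs path $\mathcal{P}(KLMN)$ has a self-intersection $u$, and the $\pi$-rotation about the fibre $\mathcal{P}^{-1}(u)$ interchanges the two halves of $KLMN$; one then takes a common perpendicular of $KM$ and $\mathcal{P}^{-1}(u)$ and doubles it under this rotation to get the required perpendicular of $KM$ and $LN$. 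In both cases interiority of the feet is immediate from the symmetry. This is the missing idea in your sketch: rather than a general tetrahedron estimate, use the built-in $\mathbb{Z}_2$-symmetry of each $KLMN$ (already recorded at the start of Section~\ref{Sec:symmetry}) to produce $o_1o_2$ explicitly.
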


\begin{proof}
The existence of $\eta$ can be obtained by Lemma \ref{Lem:trans}, and we only need to show that in $\mathcal{C}(\Gamma)$ the two geodesics $v_0v_2$ and $v_1v_3$ have a common perpendicular $o_1o_2$ such that $o_t$ lies in the interior of $v_{t-1}v_{t+1}$, $t=1,2$.

For fundamental quadrilaterals other than $Q_D(l,\pi/2)$, $Q_O(\pi/2)$, $Q_O(l,\pi/4)$ and $Q_I(\pi/2)$, let $o_t$ be the middle point of $v_{t-1}v_{t+1}$, $t=1,2$, then the geodesic $o_1o_2$ in $\mathcal{C}(\Gamma)$ is a required common perpendicular.

For $Q_D(l,\pi/2)$, $Q_O(\pi/2)$, $Q_O(\pi/4)$ and $Q_I(\pi/2)$, let $u$ be the self-intersection of the closed piecewise geodesic $\mathcal{P}(KLMN)$ in $S^2$. Then the fibre $\mathcal{P}^{-1}(u)$ intersects $KLMN$ orthogonally. It divides $KLMN$ into two quadrilaterals which can be mapped to each other by the $\pi$-rotation around $\mathcal{P}^{-1}(u)$. Notice that there exists a common perpendicular of $KM$ and $\mathcal{P}^{-1}(u)$ in $\mathcal{C}(KLMN)$, hence the union of its orbit under the $\pi$-rotation gives a common perpendicular of $KM$ and $LN$.
\end{proof}

\begin{proposition}
The fundamental quadrilateral $KLMN$ is proper.
\end{proposition}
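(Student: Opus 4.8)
The plan is to verify the defining property of ``proper'' one vertex at a time, using the normalization of Lemma \ref{Lem:Sposition}. Fix a vertex $v$ of $KLMN$ and an isometry $\eta$ as in that lemma, and write $\Gamma=\eta(KLMN)$ with vertices $v_0=2,v_1,v_2,v_3$ and edges $\gamma_0=v_0v_1,\dots,\gamma_3=v_3v_0$, so that $\gamma_0$ and $\gamma_3$ are the arcs of $2(\cos(s/2)+j\sin(s/2))$, $s\in[0,s_0]$, and $2(\cos(s/2)+e^{i\pi/l}j\sin(s/2))$, $s\in[0,s_3]$, where $\pi/l\le\pi/2$ is the angle at $v_0$ and $s_0\ge s_3$. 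Both of these geodesics lie on the geodesic two‑sphere $\{r\in S^3_2:\langle r,i\rangle=0\}$, so this is $S(\gamma_3,\gamma_0)$, and the geodesic through $2$ perpendicular to it is the Hopf fibre $N_0=\mathcal P^{-1}(i)=\{2e^{i\theta/2}\}$. A direct computation then identifies the two two‑spheres occurring in the definition of ``proper'' at $v_0$ as $S(\gamma_0,N_0)=\{\langle r,k\rangle=0\}$ and $S(\gamma_3,N_0)=\{\langle r,e^{i(\pi/l+\pi/2)}j\rangle=0\}$.

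Next I would record an elementary fact. Along any geodesic of $S^3_2$, parametrised by arclength $s$, the function $r\mapsto\langle r,w\rangle$ (with $w\in E^3$ fixed) has the form $R\cos(s/2+c)$, so its negative locus is a union of arcs of arclength $2\pi$; hence a geodesic segment of length $\le\pi$ whose two endpoints satisfy $\langle r,w\rangle\ge0$ lies entirely in the closed half $\{\langle r,w\rangle\ge0\}$. Since every edge of $KLMN$ has length $\le\pi$ (see the table in Section \ref{ssec:fund}, or the proof of Lemma \ref{Lem:4gon}(c)), to prove that $\Gamma$ is bounded by one of the two two‑spheres above it suffices to check that its four vertices lie on one closed side. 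Now $v_0,v_1\in\gamma_0\subset\{\langle r,k\rangle=0\}$ and $\langle v_3,k\rangle=2\sin(s_3/2)\sin(\pi/l)>0$ (as $s_3\in(0,\pi]$ and $\pi/l\in(0,\pi/2]$), so $v_0,v_1,v_3$ all satisfy $\langle r,k\rangle\ge0$; symmetrically $v_0,v_3\in S(\gamma_3,N_0)$ while $\langle v_1,e^{i(\pi/l+\pi/2)}j\rangle=-2\sin(s_0/2)\sin(\pi/l)<0$. Therefore properness at $v_0$ can fail only if $\langle v_2,k\rangle<0$ and $\langle v_2,e^{i(\pi/l+\pi/2)}j\rangle>0$, i.e. only if the opposite vertex $v_2$ lies strictly inside the open lune $W=\{\langle r,k\rangle<0\}\cap\{\langle r,e^{i(\pi/l+\pi/2)}j\rangle>0\}$, which has dihedral angle $\pi/l$ along the spine $N_0$. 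So everything reduces to showing that the vertex opposite to $v$ avoids $W$.

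Finally this is verified by locating that opposite vertex. For the family $Q_C(m,n)$ the vertex opposite to $v$ has the same $\mathcal P$‑image as $v$ (the lifted closed geodesic returns over $i$ after each of its two sub‑loops), hence lies on $\mathcal P^{-1}(i)=N_0\subset S(\gamma_0,N_0)$, so $\langle v_2,k\rangle=0$ and $Q_C(m,n)$ is proper at every vertex. For the families $Q_D(l)$, $Q_D(l,\pi/2)$ and the eight special quadrilaterals this is no longer automatic; there I would compute the position of $v_2$ in $S^3_2$ directly from the piecewise geodesics of Section \ref{ssec:fund} by repeated use of Lemma \ref{Lem:lift}, exploiting the $\mathbb Z/2$ symmetry of each fundamental quadrilateral (the $\pi$‑rotation about the common perpendicular furnished by Lemma \ref{Lem:Sposition}, which reduces the four vertices to two orbits), and then check that $\langle v_2,k\rangle$ and $\langle v_2,e^{i(\pi/l+\pi/2)}j\rangle$ do not both have the forbidden sign.

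I expect this last step to be the main obstacle: the reduction above is clean, but one must pin down the opposite vertex precisely enough to evaluate those two inner products for each of the remaining quadrilaterals. The point that makes it tractable is that the bounds ``edges $\le\pi$, angles $\le\pi/2$'' (together with the common‑perpendicular condition of Lemma \ref{Lem:Sposition}) confine $KLMN$ to a small enough region that, once $v_2$ is located, it visibly lies on the $\{\langle r,k\rangle\ge0\}$ side of $S(\gamma_0,N_0)$ or the $\{\langle r,e^{i(\pi/l+\pi/2)}j\rangle\le0\}$ side of $S(\gamma_3,N_0)$, and hence outside $W$.
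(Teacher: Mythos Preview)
Your reduction is clean and correct: after the normalization of Lemma~\ref{Lem:Sposition} you identify $N_0=\mathcal P^{-1}(i)$, the two test spheres $S(\gamma_0,N_0)=\{\langle r,k\rangle=0\}$ and $S(\gamma_3,N_0)=\{\langle r,e^{i\pi/l}k\rangle=0\}$, and you correctly observe that with edge lengths $\le\pi$ it suffices to check the four vertices.  The treatment of $Q_C(m,n)$ is also right: opposite vertices have the same $\mathcal P$-image, so $v_2\in N_0\subset S(\gamma_0,N_0)$.

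The genuine gap is exactly where you flag it: for the remaining ten quadrilaterals you only \emph{propose} to locate $v_2$ and check the two inner products, without carrying this out.  That is most of the proposition.  More importantly, the paper does not do this case-by-case at all; it gives a single uniform argument that you are missing.  The point is that every edge of $KLMN$ is a horizontal lift (orthogonal to the Hopf fibres), so along $\gamma_0$ the fibre direction is $i\!\cdot\!\gamma_0(s)$ and hence the horizontal $2$-plane rotates about $\gamma_0$ by exactly $s_0/2$ as one moves from $v_0$ to $v_1$; likewise along every edge.  Since each $s_t\le\pi$, each of these rotation angles is at most $\pi/2$.  Combined with the vertex angles being at most $\pi/2$, this bounds how far the (entirely horizontal) quadrilateral can swing away from the lune between $S(\gamma_{t-1},N_t)$ and $S(\gamma_t,N_t)$, and properness follows at every vertex simultaneously.

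So your coordinate approach can be made to work, but it trades a one-line geometric observation (the $s_t/2\le\pi/2$ rotation bound for horizontal lifts) for a dozen explicit computations of $v_2$; the paper's route is both shorter and the one you should present.
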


\begin{proof}
Consider $\Gamma=\eta(KLMN)$ as in Lemma \ref{Lem:Sposition}. Notice that the fibre passing $2(\cos(s_0/2)+j\sin(s_0/2))$ has the tangent vector $i(\cos(s_0/2)+j\sin(s_0/2))$. This means that moving along $\gamma_0$ from $v_0$ to $v_1$, the tangent subspace which is orthogonal to the fibre rotates around $\gamma_0$ with an angle $s_0/2$. For $\gamma_3$, the situation is similar. Since $s_t\leq\pi$ for $0\leq t\leq 3$, all the rotation angles are not bigger than $\pi/2$. Then the quadrilateral must be proper.
\end{proof}

\begin{proposition}
The fundamental quadrilateral $KLMN$ satisfies (B) and (D).
\end{proposition}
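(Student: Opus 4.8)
The plan is to work with the normalized quadrilateral $\Gamma = \eta(KLMN)$ supplied by Lemma \ref{Lem:Sposition}, for which we already know that $\rho\mid \mathcal{C}(\Gamma)$ is injective on $(\rho\mid\mathcal{C}(\Gamma))^{-1}\rho(\Gamma)$, and use the fibration $\rho : S^3_2\setminus\mathcal{P}^{-1}(-i)\to D_+$ as the organizing tool. Since properties (B) and (D) are invariant under the isometry $\eta$, it suffices to verify them for $\Gamma$. The key structural fact I would extract first is that, because $\Gamma\subset \overline{B}_+\setminus\mathcal{P}^{-1}(-i)$ and (by the injectivity in Lemma \ref{Lem:position}) each fibre of $\rho$ meets $\mathcal{C}(\Gamma)$ in a connected arc hitting $\Gamma$ in at most one point, the restriction $\rho\mid\Gamma$ is a monotone parametrization of $\partial\triangle$ after suitably identifying $D_+$ with $\triangle$; and $\rho$ extends to a continuous map $\mathcal{C}(\Gamma)\to\triangle$, differentiable on the interior, since $\rho$ is itself smooth away from $\mathcal{P}^{-1}(-i)$. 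This already delivers most of the map required in (D).

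For (B), I would argue as follows. A geodesic two-sphere $S$ belongs to $\mathcal{S}_\Gamma$ iff $S\cap\Gamma$ has at least four components. Fix $p\in\mathcal{C}(\Gamma)^\circ$. The fibre $f$ of $\rho$ through $p$ does not pass through $v_0=2$ (it meets $\mathcal{C}(\Gamma)$ in the interior), so let $S_p$ be the unique geodesic two-sphere containing $f$ and the point $2$; this is exactly the sphere used in the proof of Lemma \ref{Lem:position}. There it is shown that $f$ meets $\mathcal{C}(\Gamma)\cap S_p$ in a single point, and since $\mathcal{C}(\Gamma)\cap S_p$ is convex in $S_p$ with boundary contained in $S_p\cap\Gamma$, a count of how the arc $\mathcal{C}(\Gamma)\cap S_p$ sits against the four geodesic edges $\gamma_0,\dots,\gamma_3$ shows $S_p\cap\Gamma$ has at most two components. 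Hence $S_p\notin\mathcal{S}_\Gamma$, and $p\in S_p$ by construction, giving (B). The one point needing care is ruling out the degenerate possibility that $S_p$ meets an edge in an interval or that $\Gamma$ wraps around $S_p$ more than once; both are excluded by the edge-length bounds $s_t\le\pi$, the angle bounds $\tau_t\le\pi/2$, and the containment $\Gamma\subset\overline{B}_+$ established in Lemmas \ref{Lem:position} and \ref{Lem:Sposition}.

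For (D), take $\rho$ itself (with target $\triangle\cong D_+$) as the candidate map: it is continuous on $\mathcal{C}(\Gamma)$, differentiable on $\mathcal{C}(\Gamma)^\circ$, and carries $\Gamma$ monotonically onto $\partial\triangle$ by the injectivity/monotonicity noted above. It remains to check that for each $S\in\mathcal{S}_\Gamma$ the differential of $\rho\mid S\cap\mathcal{C}(\Gamma)^\circ$ has rank $2$ everywhere. Here I would combine two observations: first, any $S\in\mathcal{S}_\Gamma$ must be transverse to all the fibres of $\rho$ that it meets inside $\mathcal{C}(\Gamma)^\circ$ — if $S$ contained a fibre arc $f$, then $f$ meets $\mathcal{C}(\Gamma)$ in a single point (as above) forcing the intersection pattern with $\Gamma$ to be too small to put $S$ in $\mathcal{S}_\Gamma$; second, a geodesic two-sphere transverse to the fibration $\rho$ maps submersively to $D_+$, because the tangent plane to $S$ at an interior point together with the fibre direction spans $T_pS^3_2$, so $d\rho\mid_{T_pS}$ is surjective onto the $2$-dimensional $T_{\rho(p)}D_+$, hence of rank $2$. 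I expect the main obstacle to be the careful combinatorial bookkeeping showing that any $S$ which fails transversality to $\rho$ cannot have $S\cap\Gamma$ with four or more components; once that dichotomy (every $S\in\mathcal{S}_\Gamma$ is fibre-transverse) is in hand, both (B) and (D) follow cleanly from the submersion property of $\rho$ restricted to such spheres.
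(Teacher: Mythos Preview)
Your approach to (D) via the fibration $\rho$ is essentially the paper's approach, but there are two substantive issues and one place where the paper does something much simpler.

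First, a bookkeeping slip: the injectivity statement ``$\rho\mid\mathcal C(\cdot)$ is one-to-one on $(\rho\mid\mathcal C(\cdot))^{-1}\rho(\cdot)$'' is proved in Lemma~\ref{Lem:position} for $\Gamma'=\eta'(\Gamma)=\eta'\eta(KLMN)$ (with $\eta'(o_1)=2$), not for $\Gamma=\eta(KLMN)$ (with $v_0=2$). The paper works with $\Gamma'$ in the verification of (D); you should too.

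For (B), your construction is both harder and incomplete: you never actually show that the sphere $S_p$ containing the $\rho$-fibre through $p$ meets $\Gamma$ in fewer than four components --- the ``count'' you invoke is not carried out, and it is not obvious. The paper sidesteps all of this with one line: take $S_p$ in the pencil of geodesic two-spheres through the diagonal $KM$. Since each edge has length at most $\pi$ in $S^3_2$ and the great circle through it meets $S_p$ in two points $2\pi$ apart, $S_p\cap\Gamma=\{K,M\}$ (or is connected if an edge happens to lie in $S_p$); either way $S_p\notin\mathcal S_\Gamma$.

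For (D), your transversality argument has a genuine gap. You argue that if $S$ ``contained a fibre arc'' then $S\notin\mathcal S_\Gamma$, and treat this as ruling out non-transversality. But $T_pf\subset T_pS$ does \emph{not} force any arc of $f$ to lie in $S$: the fibres of $\rho$ are circles that are not geodesics, and a totally geodesic $S$ can be tangent to such a circle at an isolated point. So the dichotomy ``$S\in\mathcal S_\Gamma\Rightarrow S$ is fibre-transverse'' does not follow from what you wrote. The paper instead argues pointwise and uniformly: for \emph{every} fibre $f$ not through $2$, the same reasoning as in Lemma~\ref{Lem:position} (with the sphere through $f$ and $2$, using that $f$ is orthogonal to $\partial B_+$) gives $f\cap\mathcal C(\Gamma')\le 1$ point; the fibre through $2$ meets $\mathcal C(\Gamma')$ in the short arc $o_1o_2$, which any geodesic two-sphere $S$ meets in at most one point. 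Hence $|f\cap S\cap\mathcal C(\Gamma')|\le 1$ for all $S$ and all $f$, and the rank-$2$ statement follows (a tangential intersection at an interior point would force nearby fibres to meet $S\cap\mathcal C(\Gamma')^\circ$ twice).
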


\begin{proof}
For (B), consider the family of geodesic two spheres containing $KM$.

For (D), consider the quadrilateral $\Gamma'=\eta'(\eta(KLMN))$ given by Lemma \ref{Lem:Sposition} and Lemma \ref{Lem:position}. Then $\rho$ maps $\mathcal{C}(\Gamma')$ into $D_+$. It is differentiable in $\mathcal{C}(\Gamma')^\circ$ and carries $\Gamma'$ monotonically onto $\rho(\Gamma')=\partial\rho(\mathcal{C}(\Gamma'))$. For each $S\in \mathcal{S}_\Gamma$, consider the intersection $(S\cap\mathcal{C}(\Gamma'))\cap f$, where $f$ is a fibre of the fibration given by $\rho$. If $f$ passes $2$, then the intersection is one point. Otherwise, by a similar discussion as in Lemma \ref{Lem:position}, the intersection contains at most one point. Hence the differential of the map $\rho\mid S\cap\mathcal{C}(\Gamma')^\circ$ is everywhere of rank $2$. Then by a suitable continuous map from $D_+$ to $\mathbb{C}$, we can get the required map.
\end{proof}

Given a vertex $v$ of $KLMN$, let $\Gamma=\eta(KLMN)$ as in Lemma \ref{Lem:Sposition}. Then by section 4 of \cite{L1}, the conclusions of Theorem \ref{Thm:exist} hold, except that points in the orbit of vertices of $\Gamma$ may be singular points of $M_\Gamma$. We need the following proposition to guarantee that the minimal surface is non-singular.

\begin{proposition}
The fundamental quadrilateral $KLMN$ satisfies (C*).
\end{proposition}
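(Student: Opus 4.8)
The plan is to verify (C*) at each vertex of $KLMN$ by a single uniform argument, exhibiting an appropriate geodesic two sphere $S$ in each case. Since the vertex set of $M_\Gamma$ is the orbit of $\{K,L,M,N\}$ under $G_\Gamma$ and the whole construction is $G_\Gamma$-equivariant, it suffices to treat a vertex $v$ of $KLMN$ itself; write $\pi/m$ ($m\ge 2$ an integer) for its interior angle and $e_1,e_2$ for the two incident edges. By Lemma~\ref{Lem:4gon}, near $v$ the surface $M_\Gamma$ is the union of the $2m$ congruent minimal disks $g\mathcal{M}_\Gamma$ with $g$ running over the dihedral group $G_v\le SO(4)$ generated by the $\pi$-rotations about $e_1$ and $e_2$, and these $2m$ disks meet only along the $m$ geodesics through $v$ that make up the $G_v$-orbit of $e_1$.

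Let $\Sigma_0=S(e_1,e_2)$ be the totally geodesic two sphere through the two edges at $v$. The first step is to show $\mathcal{M}_\Gamma\cap\Sigma_0=e_1\cup e_2$. Every edge of $KLMN$ has length at most $\pi$ (Section~\ref{ssec:fund}) and a closed hemisphere of $S^3_2$ is geodesically convex, so $KLMN$ lies in the closed hemisphere $\overline H$ bounded by $\Sigma_0$ that contains the fourth vertex; hence $\mathcal{C}(KLMN)\subset\overline H$ and $\mathcal{M}_\Gamma\subset\overline H$. Since $m,n\ge 2$, Lemma~\ref{Lem:4gon}(c) gives that $\mathcal{C}(KLMN)$ is a genuine geodesic tetrahedron, so its four vertices are not all on $\Sigma_0$ and $\mathcal{M}_\Gamma\not\subset\Sigma_0$; the maximum principle (\cite{KPS}, \cite{M}) then prevents $\mathcal{M}_\Gamma$ from touching $\Sigma_0=\partial\overline H$ at an interior point, so $\mathcal{M}_\Gamma\cap\Sigma_0\subset\partial\mathcal{M}_\Gamma\cap\Sigma_0$; and since each of the two edges of $KLMN$ other than $e_1,e_2$ has length $<2\pi$, it meets $\Sigma_0$ only at the one of its endpoints lying on $\Sigma_0$, so $\partial\mathcal{M}_\Gamma\cap\Sigma_0=e_1\cup e_2$. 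Moreover $\Sigma_0$ is a supporting two sphere of the tetrahedron $\mathcal{C}(KLMN)$, so $\mathcal{C}(KLMN)\cap\Sigma_0$ is its triangular face spanned by the three endpoints of $e_1\cup e_2$.

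Next I would pin down the tangent cone of $M_\Gamma$ at $v$. Blowing up $\mathcal{M}_\Gamma$ at $v$ produces a minimal cone in $T_vS^3_2\cong\mathbb{R}^3$ whose boundary is the two rays $T_ve_1,T_ve_2$, hence a flat planar sector contained in $T_v\Sigma_0$; and it must be the sector of opening $\pi/m$ rather than $2\pi-\pi/m$, because $\mathcal{M}_\Gamma$ lies in $\mathcal{C}(KLMN)$ and the tangent cone of $\mathcal{C}(KLMN)$ at $v$ meets $T_v\Sigma_0$ in precisely the $\pi/m$ sector, namely the tangent cone of the triangular face found above. Applying the $2m$ elements of $G_v$, all of which preserve $\Sigma_0$, the tangent cone of $M_\Gamma$ at $v$ is the plane $T_v\Sigma_0$ with multiplicity one. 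Taking $S$ to be any geodesic two sphere through $v$ with $T_vS\ne T_v\Sigma_0$ — for instance the one spanned by $e_1$ and the geodesic through $v$ perpendicular to $\Sigma_0$ — the sphere $S$ meets a sufficiently small disk neighbourhood $B$ of $v$ in $M_\Gamma$ along a single arc through $v$, so that $\partial B\cap S$ consists of exactly two points; this is (C*), and Theorem~3 of \cite{L2} then yields non-singularity of $M_\Gamma$ at $v$, so the conclusions of Theorem~\ref{Thm:exist} hold.

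The step I expect to be the real obstacle is identifying the tangent cone of $\mathcal{M}_\Gamma$ at the corner $v$: one needs a priori that $\mathcal{M}_\Gamma$ is a manifold-with-corner at $v$, so that the blow-up cone is a single planar sector of opening at most $2\pi$ — this is where Lawson's boundary regularity for the minimal disks of \cite{L1} enters — and then the convexity $\mathcal{M}_\Gamma\subset\mathcal{C}(\Gamma)$ is what selects the correct opening $\pi/m$; after that the argument is soft. One could instead stay closer to \cite{L1} and verify Lawson's original condition (C), that both $S(\gamma_{t-1},N_t)$ and $S(\gamma_t,N_t)$ bound $\Gamma$, at each vertex of angle $\le\pi/3$ — a finite check against the table of Section~\ref{ssec:fund} — but for some of the fundamental quadrilaterals (C) genuinely fails, which is exactly why the route through (C*) is the one taken.
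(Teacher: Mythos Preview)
Your approach is genuinely different from the paper's. The paper does not analyze the tangent cone of $\mathcal{M}_\Gamma$ at the corner directly; instead, at each vertex where condition (C) fails it builds an auxiliary quadrilateral $\Upsilon$ (obtained from $\Gamma=\eta(KLMN)$ by replacing $v_3$ with a point $v_3'$ on the geodesic through $v_1,v_3$) which \emph{does} satisfy (C) at $v_0$, invokes Lawson's Lemma~4.3 to get a tangent plane for $D_\Upsilon$ at $v_0$, and then uses the $S^1$-sliding barrier of Lemma~\ref{Lem:maxP} to trap $D_\Gamma$ against $D_\Upsilon$ and force a tangent plane for $D_\Gamma$ as well. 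Your route via the inclusion $\mathcal{M}_\Gamma\subset\mathcal{C}(KLMN)$ is more uniform and avoids the auxiliary construction.

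However, there is a real gap where you yourself flag it. You say you need a priori that $\mathcal{M}_\Gamma$ is a manifold-with-corner at $v$, and you propose to get this from ``Lawson's boundary regularity for the minimal disks of \cite{L1}''. That is circular: Lawson's regularity at the corners is precisely Lemma~4.3 of \cite{L1}, and it \emph{uses} condition (C), which is exactly what fails for the quadrilaterals under consideration. So as written you are assuming what is to be proved. The good news is that your own convexity argument already does the job, once set up correctly. By the Schwarz reflection principle across the two geodesic edges (this is what produces $M_\Gamma$ in the first place) the Plateau map extends to a conformal harmonic map $\widetilde\psi$ on a full punctured disk about $v$; continuity at $v$ and removable singularities give a \emph{branched} minimal immersion near $v$ with some branch order $k\ge 0$, and the tangent cone of $\mathcal{M}_\Gamma$ at $v$ is then, as a set, the sector of opening $(k+1)\pi/m$ in $T_v\Sigma_0$ (or all of $T_v\Sigma_0$ when $(k+1)\pi/m\ge 2\pi$). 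Since this set must sit inside the trihedral tangent cone of the tetrahedron $\mathcal{C}(KLMN)$, whose intersection with $T_v\Sigma_0$ is exactly the $\pi/m$ face, every value $k\ge 1$ is excluded and you get multiplicity one without any a priori ``manifold-with-corner'' hypothesis. In short: drop the appeal to Lawson's corner regularity, insert the reflection/removable-singularity step to make the tangent cone well-defined, and your convexity step then finishes the proof.
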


\begin{proof}
For a given vertex $v$ of $KLMN$, if the condition (C) holds at $v$, then by Lemma 4.3 of \cite{L1}, the condition (C*) holds at $v$. Hence we only need to consider fundamental quadrilaterals other than $Q_C(m,n)$, $Q_D(l)$, $Q_D(l,\pi/2)$, and vertices with angles smaller than $\pi/2$.

\begin{figure}[h]
\centerline{\includegraphics{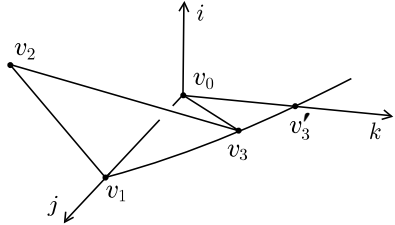}}
\caption{The geodesic quadrilateral $\Gamma$}\label{fig:compare}
\end{figure}

Let $\Gamma=\eta(KLMN)$ as in Lemma \ref{Lem:Sposition}. Then $s_0$ and $s_2$ are longer than $s_1$ and $s_3$, and the points $v_2$ and $2e^{i\pi/4}$ lie in the same component of $B_+\setminus D_+$. Since $0<s_3<s_0<\pi$, $S(\gamma_3,N_0)$ bounds $\Gamma$ and $S(\gamma_0,N_0)$ fails to bound $\Gamma$.

The great circle passing $v_1$ and $v_3$ intersects $2(\cos(s/2)+k\sin(s/2)), s \in [0,\pi]$ at an interior point $v_3'$. Consider the geodesic quadrilateral $v_0v_1v_2v_3'$ in $B_+$, which is denoted by $\Upsilon$. One can check that $\Upsilon$ is also a proper convex quadrilateral. Let $\Gamma'=\eta'(\Gamma)$ as in Lemma \ref{Lem:position}, and let $\Upsilon'=\eta'(\Upsilon)$. Then since the geodesic $v_1v_3'$ in $B_+$ has length less than $\pi$, the ``moreover'' part of Lemma \ref{Lem:position} also holds for $\Upsilon'$. Hence $\Upsilon$ also satisfies conditions (A), (B) and (D).

By the discussion in section 4 of \cite{L1}, $\Upsilon$ bounds a minimal disk $D_{\Upsilon}$ in $\mathcal{C}(\Upsilon)$. Notice that $\Upsilon$ satisfies condition (C) at $v_0$. By Lemma 4.3 of \cite{L1}, if we extend $D_{\Upsilon}$ by successive $\pi$-rotations around edges passing $v_0$, then we can get a minimal disk containing $v_0$ as an interior point, and at $v_0$ the minimal disk is non-singular. Hence $D_{\Upsilon}$ has a tangent space at $v_0$.

Denote by $D_{\Gamma}$ the minimal disk bounded by $\Gamma$ in $\mathcal{C}(\Gamma)$. Now consider the minimal disks $D_{\Gamma}'=\eta'(D_{\Gamma})$ and $D_{\Upsilon}'=\eta'(D_{\Upsilon})$. Via the $S^1$-action we can rotate $D_{\Upsilon}'$ into $B_-$ such that $\Upsilon'$ does not touch $\mathcal{C}(\Gamma')^\circ$ and $\Gamma'$ does not touch $\mathcal{C}(\Upsilon')^\circ$. Then we rotate $D_{\Upsilon}'$ back. By Lemma \ref{Lem:maxP}, the interior of $D_{\Gamma}'$ and $D_{\Upsilon}'$ does not intersect. Then $D_{\Gamma}$ must have a tangent space at $v_0$. Hence $S(\gamma_0,N_0)$ is a required geodesic two sphere, and the condition (C*) holds at $v_0$.
\end{proof}

By Theorem \ref{Thm:exist} and Lemma \ref{Lem:4gon}, for each fundamental quadrilateral we get a corresponding embedded closed non-singular minimal surface in $S^3_2$, and the genus of the surface can be computed. We list the results as below.
\begin{align*}
&M_C(m,n) && (m-1)(n-1) && M_T(\frac{\pi}{2}) && 25 && M_O(\frac{\pi}{2}) && 121 && M_I(\frac{\pi}{2}) && 841 \\
&M_D(l) && 1 && M_T(\frac{\pi}{3}) && 9 && M_O(\frac{\pi}{3}) && 49 && M_I(\frac{\pi}{3}) && 361 \\
&M_D(l,\frac{\pi}{2}) && (l-1)^2 && && && M_O(\frac{\pi}{4}) && 25 &&M_I(\frac{\pi}{5}) && 121
\end{align*}

In the above list, the surface $M_C(m,n)$ was constructed in \cite{L1}. When $m=2$ and $n=2$, it gives the Clifford torus. The surface $M_D(l)$ always gives the Clifford torus. The surface $M_D(l,\pi/2)$ ($M_O(\pi/4)$) is congruent to $M_C(l,l)$ ($M_T(\pi/2)$), and a fundamental piece of $M_C(l,l)$ ($M_T(\pi/2)$) is congruent to the union of two pieces of $M_D(l,\pi/2)$ ($M_O(\pi/4)$). Since the isometric groups of the surfaces $M_O(\pi/2)$ and $M_I(\pi/5)$ have different orders by Proposition \ref{Pro:iso}, we have seven new embedded closed minimal surfaces in total.

Clearly in Theorem \ref{Thm:exist} if $G_\Gamma$ has a subgroup $H_\Gamma$ which acts on $S^3$ freely, then $M_\Gamma/H_\Gamma$ is a minimal surface in the spherical manifold $S^3/H_\Gamma$. As an example, let $G_P$ be the group generated by the elements
$$[1,\cos\frac{\pi}{2}+i\sin\frac{\pi}{2}], [1,\cos\frac{\pi}{3}+u_I\sin\frac{\pi}{3}].$$
Then we have $G_P\subset G_I(\pi/5)$, $G_P\subset G_I(\pi/3)$, $G_P\subset G_I(\pi/2)$. Note that the order of $G_P$ is $120$. Hence we can get embedded closed minimal surfaces with genera $2$, $4$, $8$ in $S^3_2/G_P$, which is the Poincar\'e's homology three sphere.

\section{The symmetry of minimal surfaces}\label{Sec:symmetry}

\subsection{Isometric groups of the minimal surfaces}

Except the group actions given in section \ref{sssec:LofG}, there exist further symmetries on the minimal surfaces.

Let $KLMN$ be a fundamental quadrilateral given in section \ref{ssec:fund}.

When $KLMN$ is one of $Q_D(l,\pi/2)$, $Q_O(\pi/2)$, $Q_O(\pi/4)$ and $Q_I(\pi/2)$, let $u$ be the self-intersection of the closed piecewise geodesic $\mathcal{P}(KLMN)$. Then the fibre $\mathcal{P}^{-1}(u)$ splits $KLMN$ into two smaller quadrilaterals. The $\pi$-rotation around the fibre $\mathcal{P}^{-1}(u)$ preserves $KLMN$ and interchanges the two smaller quadrilaterals.

When $KLMN$ is one of $Q_C(m,n)$, $Q_T(\pi/3)$, $Q_O(\pi/3)$, $Q_I(\pi/3)$ and $Q_I(\pi/5)$, let $o_1$ and $o_2$ be the middle points of $KM$ and $LN$. Then the $\pi$-rotation around the great circle containing $o_1o_2$ preserves $KLMN$.

When $KLMN$ is $Q_C(m,n)$, there exists another order two symmetry, which is a reflection of $S^3_2$. If $m=n=l$, then $Q_C(m,n)$ can be divided into two smaller quadrilaterals which are congruent to $Q_D(l,\pi/2)$. Then the $\pi$-rotation preserving the smaller piece also preserves $Q_C(m,n)$.

By Lemma \ref{Lem:Sposition}, these symmetries preserve the minimal disks bounded by the quadrilaterals, hence they also preserve the closed minimal surfaces.

\begin{proposition}\label{Pro:iso}
When $(m-1)(n-1)\geq 2$, $m \neq n$ and $l \geq 3$, the following list gives the orders of the isometric groups of the minimal surfaces.
\begin{align*}
&M_C(m,n) && 8mn && M_O(\frac{\pi}{2}) && 1152 && M_I(\frac{\pi}{2}) && 7200 \\
&M_C(l,l) && 16l^2 && M_O(\frac{\pi}{3}) && 768 && M_I(\frac{\pi}{3}) && 4800 \\
&M_T(\frac{\pi}{3}) && 192 && M_O(\frac{\pi}{4}) && 576 && M_I(\frac{\pi}{5}) && 2880
\end{align*}
Moreover, the isometric groups are generated by the groups given in section \ref{sssec:LofG} and the isometric groups of the fundamental quadrilaterals given in section \ref{ssec:fund}.
\end{proposition}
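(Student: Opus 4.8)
The plan is to squeeze $Isom(M_\Gamma)$ between the subgroup $\widehat G$ generated by the isometries already exhibited in the paragraphs preceding the proposition and a group of the same order, and then to read off both the order and the generators.

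\emph{Lower bound.} We have $\widehat G\le Isom(M_\Gamma)$, and $\widehat G$ contains $G_R$ — which by Lemma \ref{Lem:pirot} is generated by the $\pi$-rotations about the great circles $\{c_\lambda\}$ forming $\widetilde\Gamma_R$ — together with the extra isometries attached to $Q_R$. By Lemma \ref{Lem:4gon}(a) the surface $M_\Gamma$ is tiled by $|G_R|$ copies of the fundamental quadrilateral, on which $G_R$ acts simply transitively. Each extra isometry fixes the tile $Q_R$ setwise and there restricts to a symmetry of the geodesic quadrilateral; feeding the edge lengths and angles from Section \ref{ssec:fund} into a short case check shows that, with the identity, these restrictions realise the full isometry group of $Q_R$ — a Klein four group for $Q_C(m,n)$ with $m\neq n$ (all sides equal, opposite angles equal), the dihedral group of order $8$ for $Q_C(l,l)$, and cyclic of order $2$ in every other case. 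A tile is not contained in any totally geodesic two sphere of $S^3_2$ (its $G_R$-orbit would otherwise be a closed surface glued from totally geodesic pieces, hence itself totally geodesic, contradicting genus $>1$), so its affine hull in $\mathbb{H}$ is all of $\mathbb{H}$ and an element of $O(4)$ is determined by its restriction to the tile. Hence the stabiliser of $Q_R$ in $\widehat G$ maps isomorphically onto $\mathrm{Isom}(Q_R)$ and $|\widehat G| = |G_R|\cdot|\mathrm{Isom}(Q_R)|$, which equals the number listed for $M_\Gamma$.

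\emph{Upper bound.} It suffices to prove that every $\phi\in Isom(M_\Gamma)$ carries the skeleton $\widetilde\Gamma_R$ onto itself. Then $\phi$ permutes the $|G_R|$ tiles, its orbit on tiles is all of them (already for $G_R$), and the stabiliser of $Q_R$ embeds in $\mathrm{Isom}(Q_R)$ by the affine-hull remark, so $|Isom(M_\Gamma)|\le |G_R|\cdot|\mathrm{Isom}(Q_R)| = |\widehat G|$; with the lower bound this forces $Isom(M_\Gamma)=\widehat G$, which is precisely the proposition. To show $\widetilde\Gamma_R$ is preserved I would identify it with the union of all great circles of $S^3_2$ lying in $M_\Gamma$ — a set any isometry of $S^3_2$ fixing $M_\Gamma$ must preserve, since isometries send great circles to great circles. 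One inclusion is built into the construction: by Lawson's reflection principle across an edge, $M_\Gamma$ contains the whole great circle carrying that edge, so $\widetilde\Gamma_R$ lies in the union.

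\emph{The main obstacle} is the reverse inclusion: ruling out ``stray'' great circles passing through the interiors of the minimal tiles. Here one uses that $M_\Gamma$ is a genuine higher-genus minimal surface, not just its skeleton. Under the hypotheses $M_\Gamma$ has genus $\geq 2$, so it is neither a totally geodesic two sphere nor a (Clifford) torus; in particular $Isom(M_\Gamma)$ is finite (a surface of genus $\geq 2$ carries no nontrivial circle action), and $M_\Gamma$ contains only finitely many great circles of $S^3_2$ — an infinite family would be a positive-dimensional real-analytic family ruling the surface, which would force it to be a great two sphere or the Clifford torus. A closer analysis then pins these finitely many great circles down to the edges $c_\lambda$: a great circle on $M_\Gamma$ is a closed geodesic of $M_\Gamma$ that is asymptotic at every point, off the flat locus of $M_\Gamma$ there are exactly two asymptotic directions, the skeleton vertices are flat (several of the $c_\lambda$ cross there), and propagating a putative stray great circle across tiles by the reflection principle and unique continuation for the minimal surface equation shows it must lie in $\widetilde\Gamma_R$. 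With $\widetilde\Gamma_R$ characterised in this intrinsic way, every $\phi\in Isom(M_\Gamma)$ preserves it, and comparing the two bounds yields the orders in the list together with the asserted set of generators.
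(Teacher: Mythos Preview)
Your lower bound matches the paper's, and the tile-stabiliser count via the affine hull is correct (Lemma~\ref{Lem:4gon}(c) already says the four vertices of $Q_R$ span a genuine tetrahedron, hence affinely span $\mathbb H$, so an element of $O(4)$ is determined by what it does to them).

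For the upper bound you diverge from the paper, and there is a real gap. You try to recover $\widetilde\Gamma_R$ intrinsically as the union of \emph{all} great circles of $S^3_2$ lying on $M_\Gamma$; the hard direction is ruling out stray great circles, and your ``propagation across tiles by the reflection principle and unique continuation'' does not close. Reflecting a putative stray circle $C$ across the skeleton edges just produces its finite $G_R$-orbit of great circles, still sitting on $M_\Gamma$, with no contradiction in sight; and invoking the $\pi$-rotation about $C$ itself as an isometry of $M_\Gamma$ is exactly the extra symmetry you are trying to exclude, so that route is circular. Your side remark that ``the skeleton vertices are flat'' is also imprecise: at a vertex of angle $\pi/2$ only two skeleton circles meet, which is compatible with exactly two asymptotic directions and hence with a \emph{non}-umbilic point.

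The paper sidesteps the classification of great circles entirely. It cites the end of \S1 of \cite{L1}: on $M_\Gamma$ the Gauss curvature equals $1$ precisely at the vertices with angle strictly less than $\pi/2$. This set $V$ is a pointwise curvature invariant, so every element of $Isom(M_\Gamma)$ preserves it, and from $V$ one recovers the quadrilateral tiling (the skeleton edges through $v\in V$ are the asymptotic separatrices of that umbilic). Then your own tile-stabiliser bound finishes. Your asymptotic-direction observations are groping toward the same mechanism; the missing move is to anchor the argument at the isolated curvature-$1$ locus rather than at the one-dimensional great-circle locus.
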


\begin{proof}
By the discussion before the proposition, the list gives the lower bounds. Let $M$ be a minimal surface in the above list. Denote by $V$ be the set of vertices of quadrilaterals in $M$, which have angles smaller than $\pi/2$. Then by the discussion at the end of section 1 of \cite{L1}, $v \in V$ if and only if the Gauss curvature at $v$ is $1$. Hence any isometry of $M$ must preserve the set $V$. Then it must map one quadrilateral in $M$ to another quadrilateral in $M$. Then by Lemma \ref{Lem:4gon} and the discussion before the proposition, we know that the list also gives the upper bounds, and the isometric groups of the minimal surfaces are described as in the proposition.
\end{proof}

Let $M$ be a minimal surface listed in Proposition \ref{Pro:iso}, and denote by $Isom(M)$ the isometric group of $M$. By Proposition \ref{Pro:iso}, we know that the $Isom(M)$-action on $M$ is extendable over $S^3_2$. Denote by $Isom^+(M)$ the subgroup of $Isom(M)$ which preserves both orientations of $M$ and $S^3_2$. Then $Isom^+(M)$ has index $2$ or $4$ in $Isom(M)$, depending on whether there exist elements in $Isom(M)$ reverse both orientations of $M$ and $S^3_2$.

Compared with Table \ref{tab:maximal action} and Table \ref{tab:gen max act} in section \ref{Sec:intro}, when $M$ is one of $M_C(2,l)$, $M_C(l,2)$ and $M_C(l,l)$, the $Isom^+(M)$-action and $Isom(M)$-action correspond to the actions in the last two rows. Now with the new minimal surfaces we have:

\begin{proposition}\label{Pro:max}
(1) When $M$ is one of $M_T(\pi/3)$, $M_O(\pi/3)$, $M_O(\pi/4)$, $M_I(\pi/2)$, $M_I(\pi/3)$ and $M_I(\pi/5)$, the $Isom^+(M)$-action on $(M,S^3_2)$ is maximal.

(2) When $M$ is one of $M_O(\pi/4)$ and $M_I(\pi/5)$, the $Isom(M)$-action on $(M,S^3_2)$ is maximal in the general meaning.
\end{proposition}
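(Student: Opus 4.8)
The plan is to combine the exact orders $|Isom(M)|$ given by Proposition \ref{Pro:iso} with a short computation of the index $[Isom(M):Isom^+(M)]$, and then to compare the resulting numbers with Table \ref{tab:maximal action} and Table \ref{tab:gen max act}.

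First I would collect, for the six surfaces in part (1), the pairs (genus, $|Isom(M)|$), reading the genera from the list at the end of Section \ref{Sec:existence} and the orders from Proposition \ref{Pro:iso}: these are $(9,192)$ for $M_T(\pi/3)$, $(49,768)$ for $M_O(\pi/3)$, $(25,576)$ for $M_O(\pi/4)$, $(841,7200)$ for $M_I(\pi/2)$, $(361,4800)$ for $M_I(\pi/3)$, and $(121,2880)$ for $M_I(\pi/5)$. The main structural point is that for every $M$ in this list one has $Isom(M)\subset SO(4)$. Indeed, by Proposition \ref{Pro:iso} the group $Isom(M)$ is generated by the group $G_R$ of Section \ref{sssec:LofG}, which lies in $SO(4)$ by construction, together with the single extra symmetry isolated before Proposition \ref{Pro:iso}; in each of these six cases that extra symmetry is a $\pi$-rotation (about a fibre $\mathcal{P}^{-1}(u)$, or about the great circle through the midpoints of $KM$ and $LN$), hence also lies in $SO(4)$. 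In particular --- unlike the family $M_C(m,n)$, which additionally carries a reflection of $S^3_2$ --- none of these surfaces admits an isometry reversing the orientation of $S^3_2$. On the other hand the $\pi$-rotation about any great circle of the skeleton $\widetilde{\Gamma}_R$ lying in $M$ fixes that circle pointwise and exchanges the two local sides of $M$, hence reverses the orientation of $M$; so the homomorphism $Isom(M)\to\{\pm 1\}$ recording the action on the orientation of $M$ is surjective, its kernel is precisely $Isom^+(M)$, and therefore $[Isom(M):Isom^+(M)]=2$.

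With this in hand part (1) is arithmetic: $|Isom^+(M)|$ equals $96,\,384,\,288,\,3600,\,2400,\,1440$ for the six surfaces in the order above, and these are exactly $12(g-1)$, $8(g-1)$, $12(g-1)$, $30(g-1)/7$, $20(g-1)/3$, $12(g-1)$ for $g=9,49,25,841,361,121$. The $Isom^+(M)$-action on $(M,S^3_2)$ is faithful (an isometry of $S^3_2$ fixing $M$ pointwise is the identity, since $M$ is not contained in a great $2$-sphere), extendable (each of its elements is by construction the restriction of an isometry of $S^3_2$), and preserves both orientations; and by the classification of \cite{WWZZ} recorded in Table \ref{tab:maximal action} the above numbers are precisely the maximal orders of such actions for the corresponding genera. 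Hence the $Isom^+(M)$-action is maximal. For part (2), since $Isom(M)\subset SO(4)$ we may regard $Isom(M)$ itself as a general extendable action on $(M,S^3_2)$; for $M_O(\pi/4)$ one has $|Isom(M)|=576=24(g-1)$ with $g=25$, and for $M_I(\pi/5)$ one has $|Isom(M)|=2880=24(g-1)$ with $g=121$, which by \cite{WWZ} (Table \ref{tab:gen max act}) are the maximal orders of general extendable actions for these genera. Hence the $Isom(M)$-action is maximal in the general meaning.

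The genuinely hard inputs here are external, namely the classifications of \cite{WWZZ} and \cite{WWZ} that yield Tables \ref{tab:maximal action} and \ref{tab:gen max act}, against which we only compare. Inside the present argument the one delicate step is the index computation: one must verify $Isom(M)\subset SO(4)$, so that $[Isom(M):Isom^+(M)]=2$ rather than $4$. If even one of these surfaces carried a reflection symmetry of $S^3_2$, the index would become $4$, the orientation-preserving order would drop below the tabulated maximum, and maximality would fail; this is exactly what distinguishes the list in part (1) (and the shorter one in part (2)) from the $M_C(m,n)$ family, and why these are the surfaces for which the statement is asserted.
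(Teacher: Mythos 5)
Your proposal is correct and follows essentially the same route as the paper, which states Proposition \ref{Pro:max} without a separate proof as a direct consequence of Proposition \ref{Pro:iso}, the index computation $[Isom(M):Isom^+(M)]=2$ (valid here since all generators of $Isom(M)$ listed before Proposition \ref{Pro:iso} are $\pi$-rotations, hence in $SO(4)$, with only the $M_C(m,n)$ family carrying an extra reflection), and a comparison with Tables \ref{tab:maximal action} and \ref{tab:gen max act}. Your write-up in fact makes explicit the one point the paper leaves implicit, namely why the index is $2$ rather than $4$ for these six surfaces, and all your numerical checks agree with the paper's data.
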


\subsection{The conjugation of $Isom^+(M)$ and $G_M$}

For a minimal surface $M$ in Proposition \ref{Pro:iso} other than $M_C(l,l)$, let $G_M$ be the corresponding group of $M$ defined in section \ref{sssec:LofG}. Note that $M_C(l,l)$ and $M_D(l,\pi/2)$ are the same surface, for this minimal surface $M$, let $G_M$ be the corresponding group of $M_D(l,\pi/2)$. Clearly both $Isom^+(M)$ and $G_M$ are subgroups of $Isom(M)$.

Let $\widetilde{\Gamma}_M$ be the skeleton of $M$ defined in section \ref{ssec:skel}, and let $u$ be a point in $\mathcal{P}(\widetilde{\Gamma}_M)$ other than vertices. Suppose that $\widetilde{\Gamma}_M$ intersects the fibre $\mathcal{P}^{-1}(u)$ at $t$ points. Then by Lemma \ref{Lem:lift} and Lemma \ref{Lem:trans}, $[e^{2\pi i/t},1]$ keeps $\widetilde{\Gamma}_M$ invariant. Let $\widetilde{\Gamma}_M'$ be the image of $\widetilde{\Gamma}_M$ under the map $[e^{\pi i/t},1]$. Then the group generated by $\pi$-rotations around great circles in $\widetilde{\Gamma}_M'$ also preserves $\widetilde{\Gamma}_M$. Hence by Lemma \ref{Lem:Sposition} it preserves $M$, and it also preserves the orientation of $M$. Then by Proposition \ref{Pro:iso} the group must be $Isom^+(M)$, and by Lemma \ref{Lem:pirot} we have the following proposition.

\begin{proposition}\label{Pro:conjugation}
$Isom^+(M)$ and $G_M$ are conjugate to each other by $[e^{\pi i/t},1]$.
\end{proposition}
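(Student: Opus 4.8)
The plan is to produce an explicit element of $SO(4)$ that conjugates $G_M$ to $Isom^+(M)$ and then check it does the job, using only the structural facts already established. The natural candidate is the element $g=[e^{\pi i/t},1]$ appearing in the discussion immediately preceding the statement, where $t$ is the (common, by the transitivity built into the constructions in Section \ref{ssec:skel}) number of intersection points of $\widetilde{\Gamma}_M$ with a generic fibre $\mathcal{P}^{-1}(u)$. By Lemma \ref{Lem:trans}(a), $g$ is a rotation of distance $\pi/t$ along every Hopf fibre of $S^3_2$ and induces the identity on $S^2$; in particular $g$ preserves the fibration.

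First I would record, via Lemma \ref{Lem:pirot}, that $G_M$ is exactly the group $\widetilde{\mathbf R}$ generated by the $\pi$-rotations around the great circles $\{c_\lambda\}_{\lambda\in\Lambda}$ comprising $\widetilde{\Gamma}_M$, and likewise (by the same lemma applied to $\widetilde{\Gamma}_M'=g(\widetilde{\Gamma}_M)$, which is again a lift of $\Gamma_R$ from a shifted base point) that the group $\widetilde{\mathbf R}'$ generated by the $\pi$-rotations around the great circles of $\widetilde{\Gamma}_M'$ equals the corresponding group of that lifted graph. Since conjugation by $g$ sends the $\pi$-rotation around $c_\lambda$ to the $\pi$-rotation around $g(c_\lambda)$, and $\{g(c_\lambda)\}$ are precisely the great circles of $\widetilde{\Gamma}_M'$, we get $g^{-1}G_M g=\widetilde{\mathbf R}'$.

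Next I would identify $\widetilde{\mathbf R}'$ with $Isom^+(M)$. The argument in the paragraph before the proposition does this: the great circles of $\widetilde{\Gamma}_M'$ all meet the fibres orthogonally and project onto $\Gamma_R\subset S^2$, so the $\pi$-rotations around them permute the quadrilaterals making up $\widetilde{\Gamma}_M$ (because $g$ shifts each fibre by the half-step $\pi/t$, interleaving the $t$ intersection points of $\widetilde{\Gamma}_M$ with those of $\widetilde{\Gamma}_M'$), whence $\widetilde{\mathbf R}'$ preserves $\widetilde{\Gamma}_M$; by Lemma \ref{Lem:Sposition} it therefore preserves the minimal disk bounded by each fundamental quadrilateral and hence preserves $M$. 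One checks each such $\pi$-rotation preserves the orientation of $M$ (it reverses the orientation transverse to an edge but also reverses along the fibre direction, or one simply notes it is a composition of two edge-reflections of $\widetilde{\Gamma}_M$ followed by a fibre rotation), so $\widetilde{\mathbf R}'\subseteq Isom^+(M)$; since $|\widetilde{\mathbf R}'|=|G_M|=|Isom^+(M)|$ by Proposition \ref{Pro:iso} and the order computations in Section \ref{sssec:LofG} (for $M_C(l,l)$ one uses $G_M=G_D(l,\pi/2)$ of order $4l^2$, which is half of $16l^2$—here $Isom^+(M)$ has index $2$ not $4$, and one must be slightly careful), equality follows.

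The main obstacle I expect is the bookkeeping of orientations and indices: verifying that each generator of $\widetilde{\mathbf R}'$ really lies in $Isom^+(M)$ rather than merely $Isom(M)$, and matching $|G_M|$ against $|Isom^+(M)|$ case by case (in particular reconciling the $M_C(l,l)$ case, where $G_M$ is taken to be $G_D(l,\pi/2)$, with the order $16l^2$ in Proposition \ref{Pro:iso}, so that the conjugating element accounts for exactly the ``extra'' half-step symmetry). Everything else is a direct application of Lemmas \ref{Lem:trans}, \ref{Lem:pirot} and \ref{Lem:Sposition} together with Proposition \ref{Pro:iso}.
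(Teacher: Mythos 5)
Your proposal follows essentially the same route as the paper's own argument (which is the paragraph preceding the proposition): the same conjugating element $[e^{\pi i/t},1]$, the identification $G_M=\widetilde{\mathbf{R}}$ via Lemma \ref{Lem:pirot}, the observation that the $\pi$-rotations around the great circles of the shifted graph $\widetilde{\Gamma}_M'$ preserve $\widetilde{\Gamma}_M$, hence $M$ and its orientation, via Lemma \ref{Lem:Sposition}, and the identification of that group with $Isom^+(M)$ via Proposition \ref{Pro:iso}. The only slip is in your parenthetical on $M_C(l,l)$: there $Isom^+(M)$ has index $4$ (not $2$) in $Isom(M)$, which is precisely why $|Isom^+(M)|=4l^2=|G_D(l,\pi/2)|$ and your order count still closes.
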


Denote by $G_M^+$ the intersection $Isom^+(M)\cap G_M$. Note that the last generator of $G_M$ reverses the orientation of $M$. Hence $G_M^+$ is generated by the first two generators of $G_M$. Moreover, it has index $2$ in both $Isom^+(M)$ and $G_M$.

At what follows, we will consider the relations between the three orbifold pairs
$$(M/Isom^+(M),S^3_2/Isom^+(M)), (M/G_M,S^3_2/G_M), (M/G_M^+,S^3_2/G_M^+),$$ and give some discussions about results in \cite{WWZZ} and \cite{WWZ}. For contents on orbifolds one can see \cite{BMP} and \cite{T}.

By the results in \cite{WWZZ}, $S^3_2/Isom^+(M)$ always has the underlying space $S^3$, and the singular set of it is a trivalent graph as in Figure \ref{fig:IsomM}. The numbers denote the indices of the singular edges, and singular edges with index $2$ are not labeled. The order of these orbifolds is the same as their corresponding minimal surfaces in Proposition \ref{Pro:iso}. In each case $M/Isom^+(M)$ is the common boundary of regular neighborhoods of the singular edges $a$ and $a'$.

\begin{figure}[h]
\centerline{\includegraphics{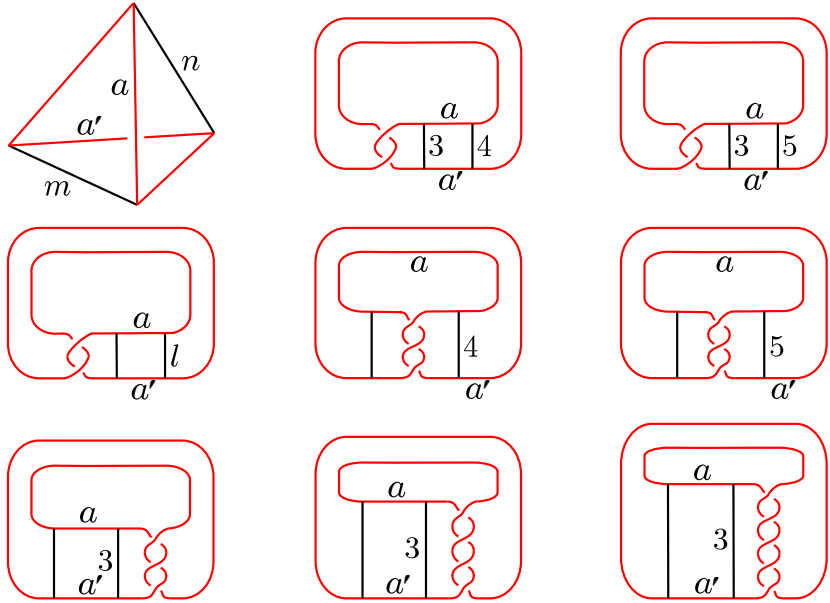}}
\caption{The three dimensional orbifolds $S^3_2/Isom^+(M)$}\label{fig:IsomM}
\end{figure}

In each orbifold in Figure \ref{fig:IsomM}, there is an unknotted (red) circle with index $2$ (except four points). The circle bounds a disk which does not contain singular points in its interior. Hence for each $S^3_2/Isom^+(M)$ we have a two sheet branched covering orbifold as in Figure \ref{fig:GMadd}. The orbifold also has underlying space $S^3$, and the singular set is a two components link. The dashed lines and circles in the figure are not contained in the singular sets. They are rotation axes, and the $\pi$-rotations of $S^3$ around them preserve the orbifolds. The red dashed lines correspond to the red circles in Figure \ref{fig:IsomM}, and we also marked the corresponding $a$ and $a'$.

\begin{figure}[h]
\centerline{\includegraphics{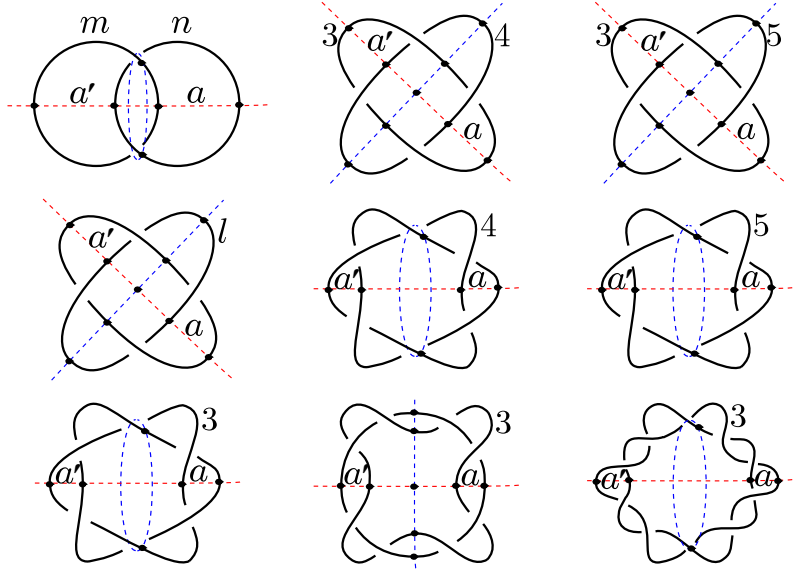}}
\caption{The three dimensional orbifolds $S^3_2/G_M^+$}\label{fig:GMadd}
\end{figure}

The orbifold in Figure \ref{fig:GMadd} is exactly $S^3_2/G_M^+$. This can be shown by the results in \cite{MS}, which give the correspondence between fibred spherical 3-orbifolds and quaternion representations of their fundamental groups. Actually by Lemma \ref{Lem:trans} the base 2-orbifold, the Euler number and the invariants of singular fibres of the fibred orbifold $S^3_2/G_M^+$ can be computed from the two generators of $G_M^+$. Similarly one can show that $S^3_2/G_M$ is isomorphic to $S^3_2/Isom^+(M)$ by the results in \cite{MS} or Lemma \ref{Lem:trans}, which is also a corollary of Proposition \ref{Pro:conjugation},

By Lemma \ref{Lem:pirot} and Lemma \ref{Lem:4gon}, $\widetilde{\Gamma}_M'/Isom^+(M)$ and $\widetilde{\Gamma}_M/G_M$ are the images of geodesic quadrilaterals which are circles with index $2$, and $\widetilde{\Gamma}_M'/G_M^+$ and $\widetilde{\Gamma}_M/G_M^+$ are the red line and the blue line (or circle) as in Figure \ref{fig:GMadd}, which intersect the fibres of $S^3_2/G_M^+$ orthogonally. Note that $\widetilde{\Gamma}_M'/Isom^+(M)$ is the union of four edges which may give unknotted or knotted allowable edges in \cite{WWZZ}.

As the image of the fundamental quadrilateral, in $S^3_2/G_M$ the image of the blue line (or circle) bounds a minimal disk, and the $\pi$-rotation around the red line in Figure \ref{fig:GMadd} induces a symmetry on the minimal disk. This corresponds to the symmetry given before Proposition \ref{Pro:iso}. Similarly the $\pi$-rotation around the blue line (or circle) induces a symmetry on $S^3_2/Isom^+(M)$, which will change the edges $a$ and $a'$. This is exactly the $\mathbb{Z}_2$ action given in \cite{WWZ}.

Finally, the main result in \cite{WWZZ} relies on the classification result of spherical orbifolds in \cite{D1}, and it is not quite satisfied at two aspects:

1. It is not quite clear that what do the actions look like.

2. The computer is used in the proof.

The spherical orbifolds can be divided into two classes: the fibred orbifolds and the non-fibred orbifolds. For the non-fibred orbifolds, these two problems can be solved since in \cite{D2} Dunbar established the correspondence between the orbifolds and the quaternion representations of their fundamental groups. Moreover, explicit fundamental domains of the group actions are given. Now with the results in \cite{MS} and the explicit constructions in this paper, these two problems can also be solved for the fibred orbifolds. Namely, for maximal extendable actions in \cite{WWZZ} one can construct the surfaces in $S^3$ explicitly and represent the groups by the quaternion, and it is feasible to prove the main result in \cite{WWZZ} without the computer.

\bibliographystyle{amsalpha}

\end{document}